\patchcmd{\BR@backref}{\newblock}{\newblock(page~}{}{}
\patchcmd{\BR@backref}{\par}{)\par}{}{}
\newtheorem{theorem}{Theorem}[section]
\newtheorem*{theorem*}{Theorem}
\newtheorem{corollary}[theorem]{Corollary}
\newtheorem{definition}[theorem]{Definition}
\newtheorem{example}[theorem]{Example}
\newtheorem{lemma}[theorem]{Lemma}
\newtheorem{proposition}[theorem]{Proposition}
\newtheorem{remark}[theorem]{Remark}
\theoremstyle{definition}
\DeclareMathOperator{\Mult}{Mult}
\begin{document}
\def\cprime{$'$}

\title[Boundary representations and compact rectangular matrix convex sets]{%
Boundary representations of operator spaces, and compact rectangular matrix
convex sets}
\author{Adam H. Fuller}
\address{Department of Mathematics, Ohio University, Athens, OH 45701}
\email{fullera@ohio.edu}
\urladdr{https://sites.google.com/site/afullermath/}
\author{Michael Hartz}
\address{Department of Mathematics, Washington University in St Louis, One
Brookings Drive, St. Louis, MO 63130}
\email{mphartz@wustl.edu}
\author{Martino Lupini}
\address{Mathematics Department\\
California Institute of Technology\\
1200 E. California Blvd\\
MC 253-37\\
Pasadena, CA 91125}
\date{\today }
\email{lupini@caltech.edu}
\urladdr{http://www.lupini.org/}
\subjclass[2000]{Primary 46L07, 47L25; Secondary 46E22, 47L07}
\thanks{M.H. was partially supported by an Ontario Trillium Scholarship and
a Feodor Lynen Fellowship. M.L. was partially supported by the NSF Grant
DMS-1600186. This work was initiated during a visit of M.H. at the
California Institute of Technology in the Spring 2016, and continued during
a visit of M.H. and M.L. at the Oberwolfach Mathematics Institute supported
by an Oberwolfach Leibnitz Fellowship. The authors gratefully acknowledge
the hospitality and the financial support of both institutions. }
\keywords{Operator space, operator system, boundary representation, compact
matrix convex set, matrix-gauged space}
\dedicatory{}

\begin{abstract}
We initiate the study of matrix convexity for operator spaces. We define the
notion of compact rectangular matrix convex set, and prove the natural
analogs of the Krein-Milman and the bipolar theorems in this context. We
deduce a canonical correspondence between compact rectangular matrix convex
sets and operator spaces. We also introduce the notion of boundary
representation for an operator space, and prove the natural analog of
Arveson's conjecture: every operator space is completely normed by its
boundary representations. This yields a canonical construction of the triple
envelope of an operator space.

\end{abstract}
\maketitle


\section{Introduction}

It is hard to overstate the importance of the theory of convexity in
analysis. This is all the more true in the study of operator systems, which
can be seen as the noncommutative analog of compact convex sets. Indeed,
given any operator system $S$, the space of matrix-valued unital completely
positive maps on $S$ is endowed with a natural notion of convex combinations
with matrix coefficients (matrix convex combination), and a topology which
is compact as long as one restricts the target to a fixed matrix algebra.
The \emph{compact matrix convex sets }that arise in this way have been
initially studied by Effros and Wittstock in \cite%
{wittstock_matrix_1984,effros_aspects_1978}. The program of developing the
theory of compact matrix convex sets as the noncommutative analog of compact
convex sets has been proposed by Effros in \cite{effros_aspects_1978}. This
program has been pursued in \cite%
{effros_matrix_1997,webster_krein-milman_1999}, where matricial
generalizations of the classical Krein-Milman and bipolar theorems are
proved. Compact matrix convex sets and the corresponding notion of matrix
extreme points have been subsequently studied in a number of papers. This
line of research has recently found outstanding applications. These include
the matrix convexity proof of Arveson's conjecture on boundary
representations due to Davidson and Kennedy \cite{davidson_choquet_2013}
building on previous work of Farenick \cite%
{farenick_extremal_2000,farenick_pure_2004}, and the work of Helton, Klep,
and McCullogh in free real algebraic geometry \cite%
{helton_every_2012,helton_free_2013}.

The main goal of this paper is to provide the nonselfadjoint analog of the
results above in the setting of operator spaces. Precisely, we introduce the
notion of compact \emph{rectangular }matrix convex set, which is the natural
analog of the notion of compact matrix convex set where convex combinations
with \emph{rectangular }matrices are considered. We then prove
generalizations of the Krein-Milman and bipolar theorems in the setting of
compact rectangular matrix convex sets. We then deduce that compact
rectangular matrix convex sets are in canonical functorial one-to-one
correspondence with operator spaces. It follows from this that any operator
space is completely normed by the matrix-valued completely contractive maps
that are rectangular matrix extreme points.

We also introduce the notion of boundary representation for operator spaces.
The natural operator space analog of Arveson's conjecture is then
established: any operator space is completely normed by its boundary
representations. This gives an explicit description of the triple envelope
of an operator space in terms of boundary representations. We also obtain in
this setting an analog of Arveson's boundary theorem. As an application, we
compute boundary representations for multiplier spaces associated with pairs
of reproducing kernel Hilbert spaces.

The results of this paper can be seen as the beginning of a convexity theory
approach to the study of operator spaces. Convexity theory has played a
crucial role in the setting of Banach spaces, such as in the groundbreaking
work of Alfsen and Effros on M-ideals in\ Banach spaces \cite%
{alfsen_structure_1972-1,alfsen_structure_1972-2} or the work of Lazar and
Lindenstrauss on $L^{1}$-predual spaces \cite%
{lazar_banach_1966,lazar_banach_1971}. This work can be seen as a first step
towards establishing noncommutative analogs of the results of Alfsen-Effros
and Lazar-Lindenstrauss mentioned above. We will see below that the crucial
notion of collinearity for bounded linear functionals on a Banach space,
which is of key importance for the work of Alfsen and Effros on facial cones
and M-ideals, has a natural interpretation in the setting of rectangular
matrix convexity.

The results of the present paper have already found application in \cite%
{lupini_uniqueness_2016}. The fact established here that an operator space
is completely normed by its matrix-valued completely contractive maps that
are rectangular matrix extreme is used there to prove that the
noncommutative Gurarij space introduced by Oikhberg in \cite%
{oikhberg_non-commutative_2006} is the unique separable nuclear operator
space with the property that the canonical map from the maximal TRO to the
triple envelope is injective.

This paper is divided into three sections, besides the introduction. In
Section \ref{Sec:boundary} we introduce the notion of boundary
representation for operator spaces, and prove that any operator space is
completely normed by its boundary representations. The boundary theorem for
operator spaces and applications to multiplier spaces for pairs of
reproducing kernel Hilbert spaces are also considered in this section. In
Section \ref{Sec:rectangular} we introduce the notion of compact rectangular
matrix convex set and rectangular matrix extreme point. We prove the
Krein-Milman and bipolar theorem for compact rectangular matrix convex sets,
and deduce the correspondence between compact rectangular matrix convex sets
and operator spaces. Finally in Section \ref{Section:gauge} we consider the
notion of matrix-gauged space. Such a concept has recently been introduced
by Russell in \cite{russell_characterizations_2015} in order to provide an
abstract characterization of selfadjoint subspaces of C*-algebras
(selfadjoint operator spaces), and to capture the injectivity property of $%
B(H)$ in such a category. We prove in Section \ref{Section:gauge} that the
construction of the injective envelope and the C*-envelope of an operator
system can be naturally generalized to the setting of matrix-gauged spaces.
A geometric approach to the study of matrix-gauged spaces and selfadjoint
operator spaces is also possible, as we show that matrix-gauged spaces are
in functorial one-to-one correspondence with compact matrix convex sets with
a distinguished matrix extreme point.

\section{Boundary representations and the Shilov boundary of an operator
space\label{Sec:boundary}}

\subsection{Notation and preliminaries}

Recall that a ternary ring of operators (TRO) $T$ is a subspace of the
C*-algebra $B(H)$ of bounded linear operators on a Hilbert space $H$ that is
closed under the triple product $\left( x,y,z\right) \mapsto xy^{\ast }z$.
An important example of a TRO is the space $B(H,K)$, where $H,K$ are Hilbert spaces.
A TRO has a canonical operator space structure coming from the inclusion $%
T\subset B(H)$, which does not depend on the concrete representation of $T$
as a ternary ring of operators on $H$. A triple morphism between TROs is a
linear map that preserves the triple product. Any TRO can be seen as the 1-2
corner of a canonical C*-algebra $\mathcal{L}\left( T\right) $ called its 
\emph{linking algebra}. A triple morphism between TROs can be seen as the
1-2 corner of a *-homomorphism between the corresponding linking algebras 
\cite{hamana_triple_1999}; see also \cite[Corollary 8.3.5]%
{blecher_operator_2004}.

The notions of (nondegenerate, irreducible, faithful) representations admit
natural generalizations from\ C*-algebras to TROs. A representation of a TRO 
$T$ is a triple morphism $\theta :T\rightarrow B(H,K)$ for some Hilbert
spaces $H,K$. A linear map $\psi: T \to B(H,K)$ is \emph{nondegenerate }if, whenever $%
p,q $ are projections in $B(H)$ and $B(K)$, respectively, such that $q\theta
(x)=\theta (x)p=0$ for every $x\in T$, one has $p=0$ and $q=0$. Similarly a
representation $\theta $ of $T$ is \emph{irreducible} if , whenever $p,q$
are projections in $B(H)$ and $B(K)$, respectively, such that $q\theta
(x)p+\left( 1-q\right) \theta (x)\left( 1-p\right) =\theta (x)$ for every $%
x\in T$ (equivalently, $q\theta (x)=\theta (x)p$ for every $x\in T$), one
has $p=1$ and $q=1$, or $p=0$ and $q=0$. Finally, $\theta $ is called \emph{%
faithful }if it is injective or, equivalently, completely isometric. Various
characterizations of nondegenerate and irreducible representations are
obtained in \cite[Lemma 3.1.4 and Lemma 3.1.5]{bohle_k-theory_2011}. A
concrete TRO $T\subset B(H,K)$ is said to act nondegenerately or irreducibly
if the corresponding inclusion representation is nondegenerate or
irreducible, respectively.

In the following we will use frequently without mention the
Haagerup-Paulsen-Wittstock extension theorem \cite[Theorem 8.2]%
{paulsen_completely_2002}, asserting that, if $H,K$ are Hilbert spaces, then
the space $B(H,K)$ of bounded linear operators from $H$ to $K$ is injective
in the category of operator spaces and completely contractive maps.\ We will
also often use the canonical way, due to Paulsen, to assign to an operator
space $X\subset B(H,K)$ an operator system $\mathcal{S}(X)\subset B(K\oplus
H)$. This operator system, called the \emph{Paulsen system}, is defined to
be the space of operators%
\begin{equation*}
\left\{ 
\begin{bmatrix}
\lambda I_{K} & x \\ 
y^{\ast } & \mu I_{H}%
\end{bmatrix}%
:x,y\in X,\lambda ,\mu \in \mathbb{C}\right\}
\end{equation*}%
where $I_{H}$ and $I_{K}$ denote the identity operator on $H$ and $K$,
respectively. Any completely contractive map $\phi :X\rightarrow Y$ between
operator spaces extends canonically to a unital completely positive map $%
\mathcal{S}(\phi ):\mathcal{S}(X)\rightarrow \mathcal{S}(Y)$ defined by%
\begin{equation*}
\begin{bmatrix}
\lambda I_{K} & x \\ 
y^{\ast } & \mu I_{H}%
\end{bmatrix}%
\mapsto 
\begin{bmatrix}
\lambda I_{K} & \phi (x) \\ 
\phi (y)^{\ast } & \mu I_{H}%
\end{bmatrix}%
\text{,}
\end{equation*}%
see \cite[Lemma 8.1]{paulsen_completely_2002}. (The Paulsen system is
defined in \cite[Chapter 8]{paulsen_completely_2002} and \cite[Section 1.3]%
{blecher_operator_2004} only in the case when $H=K$. The same proofs from 
\cite[Chapter 8]{paulsen_completely_2002} and \cite[Section 1.3]%
{blecher_operator_2004} apply with no change to this more general situation.)

\subsection{Dilations of rectangular operator states\label{Subs:dilations}}

Suppose that $X$ is an operator space. A \emph{rectangular operator state }%
on $X$ is a nondegenerate linear map $\phi :X\rightarrow B(H,K)$ such that $%
\left\Vert \phi \right\Vert _{cb}=1$. We say that a rectangular operator
state $\psi :X\rightarrow B(\widetilde{H},\widetilde{K})$ is a \emph{%
dilation }of $\phi $ if there exist linear isometries $v:H\rightarrow 
\widetilde{H}$ and $w:K\rightarrow \widetilde{K}$ such that $w^{\ast }\psi
(x)v=\phi (x)$ for every $x\in X$. The same proof as \cite[Theorem 8.4]%
{paulsen_completely_2002} gives the following.

\begin{proposition}
\label{Proposition:dilation-TRO}Any rectangular operator state $\phi
:T\rightarrow B(H,K)$ on a TRO $T\subset B\left( H_{0},K_{0}\right) $ can be
dilated to a nondegenerate triple morphism $\theta :T\rightarrow B(%
\widetilde{H},\widetilde{K})$. If $H_{0},K_{0},H,K$ are finite-dimensional,
then one can take $\widetilde{H}$ and $\widetilde{K}$ to be
finite-dimensional.
\end{proposition}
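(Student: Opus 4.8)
The plan is to reduce the statement to the classical Stinespring dilation theorem by passing through the Paulsen system, exactly as in the proof of \cite[Theorem 8.4]{paulsen_completely_2002}. Since $\left\Vert \phi \right\Vert _{cb}=1$, the associated map $\mathcal{S}(\phi):\mathcal{S}(T)\to \mathcal{S}(B(H,K))\subseteq B(K\oplus H)$ is unital and completely positive. The C*-algebra $C^{\ast }(\mathcal{S}(T))$ generated by the Paulsen system inside $B(K_{0}\oplus H_{0})$ is the linking algebra $\mathcal{L}(T)$ with the two diagonal units $e=\begin{bmatrix} I_{K_0} & 0 \\ 0 & 0 \end{bmatrix}$ and $f=\begin{bmatrix} 0 & 0 \\ 0 & I_{H_0} \end{bmatrix}$ adjoined, both of which already lie in $\mathcal{S}(T)$. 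First I would invoke Arveson's extension theorem to extend $\mathcal{S}(\phi)$ to a unital completely positive map $\Psi :C^{\ast }(\mathcal{S}(T))\to B(K\oplus H)$, and then apply Stinespring to obtain a unital $\ast $-representation $\pi :C^{\ast }(\mathcal{S}(T))\to B(\mathcal{E})$ and an isometry $V:K\oplus H\to \mathcal{E}$ with $\Psi (a)=V^{\ast }\pi (a)V$.

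Next I would extract the triple morphism from the corner structure. The orthogonal projections $P=\pi (e)$ and $Q=\pi (f)$ satisfy $P+Q=I_{\mathcal{E}}$, so setting $\widetilde{K}=P\mathcal{E}$ and $\widetilde{H}=Q\mathcal{E}$ gives $\mathcal{E}=\widetilde{K}\oplus \widetilde{H}$. For $x\in T$ the relation $e\begin{bmatrix} 0 & x \\ 0 & 0 \end{bmatrix}f=\begin{bmatrix} 0 & x \\ 0 & 0 \end{bmatrix}$ shows that $\theta (x):=\pi \begin{bmatrix} 0 & x \\ 0 & 0 \end{bmatrix}$ maps $\widetilde{H}$ into $\widetilde{K}$, and since $\pi $ is a $\ast $-homomorphism and $\begin{bmatrix} 0 & x \\ 0 & 0 \end{bmatrix}\begin{bmatrix} 0 & y \\ 0 & 0 \end{bmatrix}^{\ast }\begin{bmatrix} 0 & z \\ 0 & 0 \end{bmatrix}=\begin{bmatrix} 0 & xy^{\ast }z \\ 0 & 0 \end{bmatrix}$, the resulting map $\theta :T\to B(\widetilde{H},\widetilde{K})$ is a triple morphism. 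To see that $\theta $ dilates $\phi $, observe that $V^{\ast }PV=\Psi (e)=\begin{bmatrix} I_{K} & 0 \\ 0 & 0 \end{bmatrix}$ forces $(I-P)V\xi =0$ for every $\xi \in K$ by a one-line norm computation, and symmetrically $V$ maps $H$ into $\widetilde{H}$; hence $V$ is block diagonal with isometric corners $w:K\to \widetilde{K}$ and $v:H\to \widetilde{H}$. Feeding $\begin{bmatrix} 0 & x \\ 0 & 0 \end{bmatrix}$ through $\Psi (a)=V^{\ast }\pi (a)V$ then yields $\phi (x)=w^{\ast }\theta (x)v$, as required.

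Two points remain. For the finite-dimensional refinement I would take the minimal Stinespring dilation: when $H_{0},K_{0},H,K$ are finite-dimensional, so are $C^{\ast }(\mathcal{S}(T))$ and $K\oplus H$, whence $\mathcal{E}=\overline{\operatorname{span}}\,\pi (C^{\ast }(\mathcal{S}(T)))V(K\oplus H)$ is finite-dimensional and so are $\widetilde{H}$ and $\widetilde{K}$. The main obstacle is arranging that $\theta $ be \emph{nondegenerate}, since an arbitrary Stinespring dilation need not be. I would handle this by compressing to the essential subspaces $\widetilde{K}_{e}=\overline{\operatorname{span}}\{\theta (x)\xi :x\in T,\ \xi \in \widetilde{H}\}$ and $\widetilde{H}_{e}=\overline{\operatorname{span}}\{\theta (x)^{\ast }\eta :x\in T,\ \eta \in \widetilde{K}\}$, on which $\theta $ restricts to a nondegenerate triple morphism (using the characterizations of \cite[Lemma 3.1.4 and Lemma 3.1.5]{bohle_k-theory_2011}). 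The delicate part is verifying that the dilating isometries land in these subspaces, that is $w(K)\subseteq \widetilde{K}_{e}$ and $v(H)\subseteq \widetilde{H}_{e}$, so that the restricted corners stay isometric; this is precisely where nondegeneracy of $\phi $ is used, through the density relations $\overline{\operatorname{span}}\,\phi (T)H=K$ and $\overline{\operatorname{span}}\,\phi (T)^{\ast }K=H$ together with an approximate-identity argument for $\overline{TT^{\ast }}$ and $\overline{T^{\ast }T}$, the subtlety being genuine only when the linking algebra is nonunital, i.e.\ in the infinite-dimensional case.
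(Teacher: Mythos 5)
Your first two paragraphs, together with the minimal-Stinespring argument for finite-dimensionality, are exactly the proof the paper intends: the paper's entire proof is the instruction to repeat Paulsen's Theorem 8.4 with $M_{2}(A)$ replaced by $C^{\ast}(\mathcal{S}(T))$, and you carry this out correctly. The genuine gap is the step you yourself flag as delicate, and it cannot be closed as you propose: nondegeneracy of $\phi$ does \emph{not} imply $w(K)\subseteq\widetilde{K}_{e}$ and $v(H)\subseteq\widetilde{H}_{e}$. Those containments depend on which Arveson extension $\Psi$ you chose, and bad extensions exist. Concretely, take $T=\mathbb{C}z_{0}\subset B(\mathbb{C}^{2},\mathbb{C}^{2})$ with $z_{0}=e_{1}e_{1}^{\ast}$; then $C^{\ast}(\mathcal{S}(T))\cong M_{2}(\mathbb{C})\oplus\mathbb{C}\oplus\mathbb{C}$ (the scalar summands acting on $\mathbb{C}e_{2}$ in the $K_{0}$ and $H_{0}$ slots), with $\mathcal{L}(T)$ the $M_{2}(\mathbb{C})$ summand, $e=(E_{11},1,0)$ and $f=(E_{22},0,1)$. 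Let $\phi(z_{0})=z=\mathrm{diag}(1,\frac{1}{2})$, which is injective with dense range and satisfies $\Vert\phi\Vert_{cb}=\Vert z\Vert=1$, so $\phi$ is a rectangular operator state. Let $\Phi_{1}:M_{2}(\mathbb{C})\to B(\mathbb{C}^{2}\oplus\mathbb{C}^{2})$ send $E_{11}\mapsto\left[\begin{smallmatrix}zz^{\ast}&0\\0&0\end{smallmatrix}\right]$, $E_{12}\mapsto\left[\begin{smallmatrix}0&z\\0&0\end{smallmatrix}\right]$, $E_{22}\mapsto\left[\begin{smallmatrix}0&0\\0&I\end{smallmatrix}\right]$; this is completely positive by Choi's criterion, since $\left[\begin{smallmatrix}zz^{\ast}&z\\z^{\ast}&I\end{smallmatrix}\right]=\left[\begin{smallmatrix}z\\I\end{smallmatrix}\right]\left[\begin{smallmatrix}z\\I\end{smallmatrix}\right]^{\ast}\geq0$. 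Then $\Psi(m,\lambda,\mu):=\Phi_{1}(m)+\lambda\left[\begin{smallmatrix}I-zz^{\ast}&0\\0&0\end{smallmatrix}\right]$ is a unital completely positive extension of $\mathcal{S}(\phi)$, and \emph{any} Stinespring pair $(\pi,V)$ for $\Psi$ satisfies $V^{\ast}\pi((0,1,0))V=\left[\begin{smallmatrix}I-zz^{\ast}&0\\0&0\end{smallmatrix}\right]\neq0$. Since $(0,1,0)$ annihilates $\mathcal{L}(T)$, the range of the projection $\pi((0,1,0))$ is orthogonal to $\overline{\mathrm{span}}\,\pi(\mathcal{L}(T))\mathcal{E}$, which contains $\widetilde{K}_{e}\oplus\widetilde{H}_{e}$; hence $we_{2}=Ve_{2}$ has a component of norm $\sqrt{3}/2$ orthogonal to $\widetilde{K}_{e}$. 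What nondegeneracy of $\phi$ does give is only that the compressions $w_{e}:=q_{e}w$ and $v_{e}:=p_{e}v$ are \emph{injective} (since $\mathrm{ran}\,\phi(x)\subseteq\mathrm{ran}\,w_{e}^{\ast}$ and $\overline{\mathrm{span}}\,\phi(T)H=K$, and symmetrically); indeed in this example, for the minimal Stinespring dilation, $w_{e}^{\ast}w_{e}=zz^{\ast}$, injective but not isometric. So no density or approximate-identity argument can prove your containment. (Your closing diagnostic is also off: here $\mathcal{L}(T)\cong M_{2}(\mathbb{C})$ \emph{is} unital; the relevant issue is whether $e$ and $f$ lie in $\mathcal{L}(T)$, not whether $\mathcal{L}(T)$ has a unit.)

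The proposition itself is fine; the repair is to modify the dilation rather than prove the containment. Since every $\theta(x)$ vanishes on $\widetilde{H}_{e}^{\perp}$ and has range in $\widetilde{K}_{e}$, one has $\theta(x)=q_{e}\theta(x)p_{e}$, so the compressed triple still satisfies $w_{e}^{\ast}\theta_{e}(x)v_{e}=\phi(x)$ with $\theta_{e}$ nondegenerate but $v_{e},w_{e}$ merely contractive. Now absorb the isometric defect in orthogonal ampliations: set $\theta^{\prime}=\theta_{e}\oplus(\theta_{e}\otimes1_{\ell^{2}(I_{1})})\oplus(\theta_{e}\otimes1_{\ell^{2}(I_{2})})$, $v^{\prime}=v_{e}\oplus j(1-v_{e}^{\ast}v_{e})^{1/2}\oplus0$, and $w^{\prime}=w_{e}\oplus0\oplus k(1-w_{e}^{\ast}w_{e})^{1/2}$, where $j$ and $k$ are isometries of $H$ and $K$ into the second and third summands respectively; such $j,k$ exist once $I_{1},I_{2}$ are large enough, and finite sets suffice in the finite-dimensional case. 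Then $v^{\prime},w^{\prime}$ are isometries, the zero components kill the cross terms so that $w^{\prime\ast}\theta^{\prime}(x)v^{\prime}=w_{e}^{\ast}\theta_{e}(x)v_{e}=\phi(x)$, and $\theta^{\prime}$ is a direct sum of nondegenerate triple morphisms, hence nondegenerate. This completes your argument, preserves the finite-dimensionality refinement, and shows in passing that the conclusion requires only $\Vert\phi\Vert_{cb}=1$, not nondegeneracy of $\phi$.
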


In order to prove Proposition \ref{Proposition:dilation-TRO} one can proceed
as in \cite[Theorem 8.4]{paulsen_completely_2002}, by replacing $M_{2}\left(
A\right) $ for a given C*-algebra $A$ with the C*-algebra generated by $%
\mathcal{S}\left( T\right) $ inside $B(K_{0}\oplus H_{0})$. It is clear that
in Proposition \ref{Proposition:dilation-TRO} one can choose $\theta $ and
the linear isometries $v:H\rightarrow \widetilde{H}$ and $w:K\rightarrow 
\widetilde{K}$ in such a way that $\widetilde{K}$ is the linear span of $%
\theta(T) \theta(T)^* wK\cup \theta \left( T\right) vH$, and $%
\widetilde{H}$ is the linear span of $\theta(T)^* \theta(T) vH\cup
\theta \left( T\right) ^{\ast }wK$. In this case, we call such a dilation $%
\theta $ a \emph{minimal} dilation of $\phi $.
In the sequel, it will often be convenient to identify $H$ with a subspace
of $\widetilde H$ and $K$ with a subspace of $\widetilde K$.

\begin{definition}
  Let $\phi: X \to B(H,K)$ be a rectangular operator state and
  let $\psi :X\rightarrow B(\widetilde H,\widetilde K)$ be a dilation of $\phi$.
  We can assume that $H \subset \widetilde H$ and $K \subset \widetilde K$.
  Let $p$ be the orthogonal projection from $\widetilde H$ onto $H$
  and let $q$ be the orthogonal projection from $\widetilde K$ onto $K$.
  The dilation $\psi $ is \emph{trivial} if%
\begin{equation*}
\psi \left( x\right) =q\psi \left( x\right) p+ (1 - q) \psi \left( x\right) (1- p)
\end{equation*}%
for every $x\in X$. The operator state $\phi $ on an operator space $X$ is 
\emph{maximal }if it has no nontrivial dilation.
\end{definition}

It is clear that, when $X$ is an operator system, $\widetilde H= \widetilde K$, $q=p$, and $\psi $
is a unital completely positive map, the notion of trivial dilation as above
recovers the usual notion of trivial dilation.

\begin{definition}
Suppose that $X$ is a subspace of a TRO $T$ such that $T$ is generated as a
TRO by $X$. The operator state $\phi $ on $X$ has the\emph{\ unique
extension property} if any rectangular operator state $\widetilde{\phi }$ of 
$T$ whose restriction to $X$ coincides with $\phi $ is automatically a
triple morphism.
\end{definition}

We now observe, that a rectangular operator state on an operator space is
maximal if and only if it has the unique extension property. The analogous
fact for unital completely positive maps on operator systems is well known;
see \cite{arveson_noncommutative_2008}.

\begin{lemma}
\label{lem:positive_cone} For a TRO $T$, the set of positive elements of the 
$C^{\ast }$-algebra $TT^{\ast }$ is the closed convex cone generated by $%
\{xx^{\ast }:x\in T\}$.
\end{lemma}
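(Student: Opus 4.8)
The plan is to write $C$ for the closed convex cone generated by $\{xx^* : x\in T\}$ and to prove $C = (TT^*)_+$. One inclusion is immediate: each $xx^*$ is positive in the C*-algebra $TT^*$, and the positive cone is a closed convex cone, so $C\subseteq (TT^*)_+$. It will also be useful to note that since $\lambda\, xx^* = (\sqrt{\lambda}\,x)(\sqrt{\lambda}\,x)^*$ with $\sqrt{\lambda}\,x\in T$, the convex cone generated by $\{xx^*\}$ is precisely the set of finite sums $\sum_{i=1}^n x_i x_i^*$ with $x_i\in T$; thus $C$ is the norm closure of these finite sums.

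For the reverse inclusion I would first record the underlying module facts, all consequences of the TRO axiom $x y^* z\in T$: the linear spans $TT^*$ and $T^*T$ are closed under multiplication and adjoint, hence their closures are C*-algebras, and one has $T\cdot(T^*T)\subseteq T$. Now take $a\in (TT^*)_+$. Its square root $a^{1/2}$ lies in $TT^*$ by continuous functional calculus, so I can approximate it in norm by a finite sum of elements $u_k v_k^*$; averaging the approximant with its adjoint, I may assume it has the self-adjoint form $b=\sum_{k} u_k v_k^*$ with $u_k,v_k\in T$. Then $bb^* = b^2$ approximates $a=(a^{1/2})^2$ in norm, so it suffices to show that each such $bb^*$ lies in $C$.

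The crux is to rewrite $bb^*$ as a finite sum $\sum_j z_j z_j^*$ with $z_j\in T$. Expanding gives $bb^* = \sum_{k,l} u_k\,(v_k^* v_l)\,u_l^*$, where the Gram matrix $G=[v_k^* v_l]$ is a positive element of the C*-algebra $M_m(T^*T)$ (its positivity as an operator matrix is the usual $\sum_{k,l}\langle v_l\xi_l, v_k\xi_k\rangle=\|\sum_k v_k\xi_k\|^2\ge 0$, and this agrees with positivity in the C*-subalgebra). Taking $G^{1/2}=[g_{kj}]\in M_m(T^*T)$ and using $G_{kl}=\sum_j g_{kj} g_{lj}^*$, I obtain $bb^* = \sum_j z_j z_j^*$ with $z_j=\sum_k u_k g_{kj}$, and each $z_j\in T$ because $g_{kj}\in T^*T$ and $T(T^*T)\subseteq T$. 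Hence $bb^*\in C$, and letting the approximation converge yields $a\in C$.

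The only genuine obstacle is this Gram-matrix maneuver: the point is to recognize that the ``inner products'' $v_k^* v_l$ live in the C*-algebra $T^*T$, so that a square root can be extracted inside $M_m(T^*T)$ and then absorbed back into $T$ on the left via the TRO module property. The approximation of $a^{1/2}$, the self-adjoint averaging, and the verification that $TT^*$ and $T^*T$ are C*-algebras are all routine.
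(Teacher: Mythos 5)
Your proof is correct, but it takes a genuinely different route from the paper's. The paper invokes a nontrivial external fact (cited from Eleftherakis): the C*-algebra $TT^{\ast}$ admits a contractive approximate identity $(e_i)$ whose elements are finite sums $\sum_j x_j x_j^{\ast}$ with $x_j \in T$. Given a positive $a \in TT^{\ast}$, it then writes $a = \lim_i a^{1/2} e_i a^{1/2}$ and observes that $a^{1/2}\bigl(\sum_j x_j x_j^{\ast}\bigr)a^{1/2} = \sum_j (a^{1/2}x_j)(a^{1/2}x_j)^{\ast}$ lies in the cone, using that $T$ is a \emph{left} $TT^{\ast}$-module. You instead approximate $a^{1/2}$ in norm by finite sums $b = \sum_k u_k v_k^{\ast}$ and convert $bb^{\ast}$ into a sum $\sum_j z_j z_j^{\ast}$ with $z_j \in T$ by extracting the square root of the Gram matrix $[v_k^{\ast}v_l]$ inside $M_m(T^{\ast}T)$ and absorbing its entries back into $T$ via the \emph{right} $T^{\ast}T$-module property. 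The two arguments rest on the same underlying mechanism---pushing elements of the linking C*-algebras back into $T$ through the module structure---but yours trades the cited approximate-identity result for a self-contained Gram-matrix factorization that needs only standard functional calculus, positivity of operator matrices, and spectral permanence for C*-subalgebras; all the steps you label routine (positivity of the Gram matrix, $g_{kj} \in T^{\ast}T$, $bb^{\ast} \to a$) do check out. The paper's version is shorter modulo the citation; yours is longer but elementary. One shared caveat: both proofs implicitly use that $T$ is norm-closed, since $TT^{\ast}$ and $T^{\ast}T$ denote closed linear spans and one multiplies them back into $T$; this is the standard convention for TROs, so it is not a gap relative to the paper.
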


\begin{proof}
Let $C\subset TT^{\ast }$ denote the closed convex cone generated by $%
\{xx^{\ast }:x\in T\}$. It is clear that $C$ is contained in the set of
positive elements of $TT^{\ast }$. Conversely, suppose that $a\in TT^{\ast }$
is positive. By the remarks at the beginning of Section 2.2 of \cite%
{eleftherakis_strong_2017}, the $C^{\ast }$-algebra $TT^{\ast }$ admits a
contractive approximate identity $(e_{i})$ of elements of the form%
\begin{equation*}
\sum_{j}x_{j}x_{j}^{\ast }
\end{equation*}%
where $x_{j}\in T$. For such an element of $TT^{\ast }$ one has that%
\begin{equation*}
a^{1/2}\left( \sum_{j}x_{j}x_{j}^{\ast }\right)
a^{1/2}=\sum_{j}(a^{1/2}x_{j})(a^{1/2}x_{j})^{\ast }\in C\text{,}
\end{equation*}
as $T$ is a left $T T^*$-module.
Thus, $a^{1/2}e_{i}a^{1/2}\in C$ for every $i$. It follows that $%
a=\lim_{i}a^{1/2}e_{i}a^{1/2}\in C$.
\end{proof}

\begin{lemma}\label{Lemma:trivial}
Let $T$ be a TRO, let $\psi: T \to B(\widetilde H,\widetilde K)$ be a completely contractive
linear map and suppose that $H \subset \widetilde H$ and $K \subset \widetilde K$
are closed subspaces with corresponding orthogonal projections $p \in B(\widetilde H)$ and $q \in B(\widetilde K)$.
If the map
\begin{equation*}
\theta: T \to B(H, K), \quad x \mapsto q \psi(x) p,
\end{equation*}
is a non-degenerate triple morphism, then $\psi$ is a trivial dilation of $\theta$.
\end{lemma}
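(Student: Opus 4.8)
The plan is to transfer the problem to the completely positive setting via the Paulsen system and then run a Schwarz-inequality/multiplicative-domain argument, the twist being that the relevant compression is multiplicative only because $\theta$ is a triple morphism. Writing $T\subseteq B(H_0,K_0)$, I form the Paulsen system $\mathcal{S}(T)\subseteq B(K_0\oplus H_0)$ and the associated unital completely positive map $\mathcal{S}(\psi)\colon\mathcal{S}(T)\to B(\widetilde K\oplus\widetilde H)$, and I extend it by Arveson's extension theorem to a unital completely positive map $\Phi$ on the $C^*$-algebra $C^*(\mathcal{S}(T))$ generated by the Paulsen system, which contains the linking algebra of $T$; in particular $\overline{TT^*}$ and $\overline{T^*T}$ occupy its two diagonal corners. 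The diagonal matrix units $e_1,e_2\in\mathcal{S}(T)$ are projections whose images $\Phi(e_1),\Phi(e_2)$ are again projections, so $e_1,e_2$ lie in the multiplicative domain of $\Phi$, whence $\Phi$ respects the corner decomposition. This exhibits $\psi$ as the $(1,2)$-corner of $\Phi$ and produces unital completely positive corner maps $\Phi_{11}$ on $\overline{TT^*}+\mathbb{C}$ and $\Phi_{22}$ on $\overline{T^*T}+\mathbb{C}$. Since $\theta=q\psi(\cdot)p$, the map $\psi$ is a dilation of $\theta$, and the assertion to prove is exactly that the off-diagonal blocks $c(x):=(1-q)\psi(x)p$ and $b(x):=q\psi(x)(1-p)$ vanish for every $x$.

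The Schwarz inequality for $\Phi$, applied to the corner element $\hat x=\left[\begin{smallmatrix}0 & x\\ 0 & 0\end{smallmatrix}\right]$, gives $\Phi_{22}(x^*x)\ge\psi(x)^*\psi(x)$ in $B(\widetilde H)$; compressing by $p$ and splitting $\psi(x)^*\psi(x)$ across $q$ and $1-q$ yields
\[
p\,\Phi_{22}(x^*x)\,p\;\ge\;\theta(x)^*\theta(x)+c(x)^*c(x).
\]
Thus it suffices to prove the key identity $p\,\Phi_{22}(x^*x)\,p=\theta(x)^*\theta(x)$, for then $c(x)^*c(x)\le 0$ and $c(x)=0$. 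Symmetrically, the inequality $\Phi_{11}(xx^*)\ge\psi(x)\psi(x)^*$ together with $q\,\Phi_{11}(xx^*)\,q=\theta(x)\theta(x)^*$ will force $b(x)=0$.

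To establish the key identity I bring in the triple-morphism hypothesis. Since $\theta$ is a non-degenerate triple morphism, it is (by Hamana's theorem, recalled in the preliminaries) the $(1,2)$-corner of a $*$-homomorphism of linking algebras; restricting to the corner gives a $*$-homomorphism $\pi_{22}\colon\overline{T^*T}\to B(H)$ with $\pi_{22}(x^*y)=\theta(x)^*\theta(y)$, and non-degeneracy of $\theta$ makes $\pi_{22}$ non-degenerate, hence a unital completely positive map on $\overline{T^*T}+\mathbb{C}$. Now $w\mapsto p\,\Phi_{22}(w)\,p$ and $\pi_{22}$ are both unital completely positive maps into $B(H)$. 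By Lemma~\ref{lem:positive_cone}, applied to the TRO $T^*$ (whose associated $C^*$-algebra is $\overline{T^*T}$), every positive $w\in\overline{T^*T}$ is a norm-limit of sums $\sum_k x_k^*x_k$, and for such sums the compressed Schwarz inequality gives $p\,\Phi_{22}(\sum_k x_k^*x_k)\,p\ge\sum_k\theta(x_k)^*\theta(x_k)=\pi_{22}(\sum_k x_k^*x_k)$; passing to the limit, $p\,\Phi_{22}(w)\,p\ge\pi_{22}(w)$ for every positive $w$. Applying the same inequality to $\|w\|\cdot 1-w$ and using unitality yields the reverse inequality, so $p\,\Phi_{22}(\cdot)\,p=\pi_{22}$; in particular $p\,\Phi_{22}(x^*x)\,p=\theta(x)^*\theta(x)$, the key identity. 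The same argument with $\Phi_{11}$ and Lemma~\ref{lem:positive_cone} applied to $T$ itself gives $q\,\Phi_{11}(xx^*)\,q=\theta(x)\theta(x)^*$, and hence $\psi(x)=q\psi(x)p+(1-q)\psi(x)(1-p)$ for all $x$.

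The main obstacle is precisely this key identity. The subtlety is that the extension $\Phi$, and therefore the corner map $\Phi_{22}$, is not canonical, so one cannot compute $\Phi_{22}$ on $T^*T$ from the data; the a priori inequality $\Phi_{22}(x^*x)\ge\psi(x)^*\psi(x)$ runs the wrong way. The resolution is to play the compression of $\Phi_{22}$ against the genuinely multiplicative map $\pi_{22}$ supplied by the triple morphism, and to upgrade the one-sided inequality on the generators $x^*x$ to an equality of unital completely positive maps by invoking the description of the positive cone in Lemma~\ref{lem:positive_cone} together with the order-reversing trick for unital maps. Non-degeneracy of $\theta$ enters essentially, as it is exactly what makes $\pi_{11}$ and $\pi_{22}$ unital.
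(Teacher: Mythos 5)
Your reduction is sound up to the key identity: passing to the Paulsen system, using the multiplicative domain of the diagonal projections to split $\Phi$ into corners, the Schwarz inequality $\Phi_{22}(x^*x)\ge \psi(x)^*\psi(x)$, and the observation that everything follows once $p\,\Phi_{22}(x^*x)\,p=\theta(x)^*\theta(x)$ is known --- all of this is correct, and it parallels the paper's comparison of $q\sigma(\cdot)q$ with the $*$-homomorphism $\pi$ induced by $\theta$ (the paper obtains its homomorphism by dilating $\psi$ to a triple morphism instead of by Arveson extension of $\mathcal{S}(\psi)$; that difference is cosmetic). The genuine gap is your proof of the reverse inequality. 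The inequality $p\,\Phi_{22}(v)\,p\ge\pi_{22}(v)$ has been established only for $v$ in the closed cone generated by $\{x^*x:x\in T\}$, which by Lemma \ref{lem:positive_cone} is the positive cone of $\overline{T^*T}$ --- \emph{not} of its unitization. The element $\Vert w\Vert 1-w$ has a nonzero scalar part, so it never lies in that cone when $\overline{T^*T}$ is non-unital (e.g.\ $T=\mathcal{K}(H)$, where $\overline{T^*T}=\mathcal{K}(H)$); ``applying the same inequality'' to it is therefore unjustified.

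Moreover, this step cannot be repaired by order and unitality considerations alone, so the gap is one of substance. Take $A=c_0$, let $\Psi_2$ on $A+\mathbb{C}1$ be the unital extension of the zero homomorphism on $A$ (the character at infinity), and let $\Psi_1$ be evaluation at the first coordinate. Both are unital completely positive (indeed multiplicative) maps on $A+\mathbb{C}1$, and $\Psi_1\ge\Psi_2=0$ on $A_+$, yet $\Psi_1\neq\Psi_2$ on $A$. What excludes this behavior in your situation is precisely non-degeneracy of $\pi_{22}$, and it must enter the vanishing argument itself --- not merely serve to make $\pi_{22}$ unital, as your closing paragraph suggests. Concretely: your forward inequality says that $\varphi:=p\,\Phi_{22}(\cdot)\,p-\pi_{22}$ is a positive map on $\overline{T^*T}$; for an approximate identity $(e_i)$ one has $0\le\varphi(e_i)\le p-\pi_{22}(e_i)$, and $\pi_{22}(e_i)\to p=I_H$ strongly by non-degeneracy, so $\varphi(e_i)\to 0$ strongly; a positive map annihilating an approximate identity in this sense is zero (see \cite[Lemma I.9.5]{davidson_c*-algebras_1996}, which is exactly how the paper concludes). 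With this replacement your argument closes, and it then becomes essentially the paper's proof transported through the Paulsen system.
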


\begin{proof}
By dilating $\psi $ if necessary, we may assume without loss of generality
that $\psi $ is a triple morphism. Thus, there exists a unique $\ast $%
-homomorphism 
\begin{equation*}
\sigma :TT^{\ast }\rightarrow B(\widetilde K)\quad \text{ such that }\quad \sigma
(xy^{\ast })=\psi (x)\psi (y)^{\ast }\quad (x,y\in T).
\end{equation*}%
The assumption further implies that there exists a $\ast $-homomorphism 
\begin{equation*}
\pi :TT^{\ast }\rightarrow B(K)\quad \text{ such that}\quad \pi (xy^{\ast
})=q\psi (x)p\psi (y)^{\ast }q.
\end{equation*}%
Since $\theta $ is non-degenerate, $\pi $ is non-degenerate as well.
Consider now the map 
\begin{equation*}
\varphi :TT^{\ast }\rightarrow B(K),\quad a\mapsto q\sigma (a)q-\pi (a).
\end{equation*}%
We claim that $\varphi =0$. To this end, observe that if $x\in T$, then 
\begin{equation*}
\varphi (xx^{\ast })=q\psi (x)(1-p)\psi (x)^{\ast }q\geq 0,
\end{equation*}%
so $\varphi $ is a positive map by Lemma \ref{lem:positive_cone}. Let $%
(e_{i})$ be an approximate identity for $TT^{\ast }$. Since $\pi $ is
non-degenerate, $\pi (e_{i})$ tends to $q$ in the strong operator topology.
Since $q\sigma (e_{i})q\leq q$, it follows that $\varphi (e_{i})=q\sigma
(e_{i})q-\pi (e_{i})$ tends to zero in the strong operator topology.
Combining this with positivity of $\varphi $, it follows that $\varphi =0$
(see, e.g. \cite[Lemma I.9.5]{davidson_c*-algebras_1996}).

In particular, we see that for $x\in T$, 
\begin{equation*}
0=\varphi (xx^{\ast })=q\psi (x)(1-p)\psi (x)^{\ast }q=[q\psi
(x)(1-p)][q\psi (x)(1-p)]^{\ast },
\end{equation*}%
so that $q\psi (x)(1-p)=0$. A similar argument, replacing $TT^{\ast }$ with $%
T^{\ast }T$, shows that $p\psi (x)^{\ast }(1-q)=0$. Thus, $\psi $ is a
trivial dilation of $\theta $.
\end{proof}

\begin{proposition}
\label{Proposition:maximal-uep}Suppose that $\phi :X\rightarrow B(H,K)$ is a
rectangular operator state of $X$, and $T$ is a TRO containing $X$ as a
generating subspace. Then $\phi $ is maximal if and only if it has the
unique extension property.
\end{proposition}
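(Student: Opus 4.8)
The plan is to prove the two implications separately, mirroring Arveson's characterisation of the unique extension property for unital completely positive maps on operator systems, but carrying through the two independent projections associated with the rectangular setting. Throughout I write $p$ and $q$ for the orthogonal projections onto $H$ and $K$ inside the larger spaces $\widetilde H$ and $\widetilde K$.

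To show that the unique extension property implies maximality, I would take an arbitrary dilation $\psi\colon X\to B(\widetilde H,\widetilde K)$ of $\phi$ and extend it, using injectivity of $B(\widetilde H,\widetilde K)$ (the Haagerup--Paulsen--Wittstock theorem), to a completely contractive map $\widetilde\psi\colon T\to B(\widetilde H,\widetilde K)$. The compression $\theta\colon T\to B(H,K)$, $x\mapsto q\widetilde\psi(x)p$, restricts to $\phi$ on $X$; since compressions are completely contractive and $\|\phi\|_{cb}=1$ it has cb-norm $1$, and it is nondegenerate because any projections witnessing its degeneracy on $T$ would in particular witness degeneracy of $\phi$ on $X$. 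Thus $\theta$ is a rectangular operator state on $T$ extending $\phi$, so by hypothesis it is a nondegenerate triple morphism. Lemma~\ref{Lemma:trivial}, applied to $\widetilde\psi$ with this $\theta=q\widetilde\psi(\cdot)p$, then shows that $\widetilde\psi$ is a trivial dilation of $\theta$; restricting the resulting triviality identity to $X$ shows that $\psi$ is a trivial dilation of $\phi$, so $\phi$ is maximal.

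For the converse I would begin with a rectangular operator state $\widetilde\phi\colon T\to B(H,K)$ extending $\phi$ and dilate it, via Proposition~\ref{Proposition:dilation-TRO}, to a nondegenerate triple morphism $\theta\colon T\to B(\widetilde H,\widetilde K)$ with isometries $v,w$ satisfying $w^*\theta(x)v=\widetilde\phi(x)$. After identifying $H$ and $K$ with the relevant subspaces, the restriction $\theta|_X$ is a dilation of $\phi$: its cb-norm equals $1$, and its nondegeneracy descends from that of $\theta$, since the set of elements of $T$ annihilated on the appropriate side by a given pair of projections is a sub-TRO and hence, being generated by $X$, it suffices to test degeneracy on $X$. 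Maximality of $\phi$ forces this dilation to be trivial, which is precisely the intertwining relation $q\theta(x)=\theta(x)p$ for all $x\in X$.

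The key remaining step --- and the one genuinely using that $X$ generates $T$ --- is to promote this intertwining from $X$ to all of $T$. I would set $N=\{x\in T:q\theta(x)=\theta(x)p\}$ and observe, using that $\theta$ is a triple morphism and that the relation is equivalent to its adjoint $\theta(x)^*q=p\theta(x)^*$, that $N$ is a norm-closed subspace closed under the triple product; as $X\subseteq N$ and $X$ generates $T$, we get $N=T$. With $q\theta(x)=\theta(x)p$ valid on all of $T$, a direct computation on triple products $xy^*z$, commuting $p$ and $q$ past $\theta$ using this relation and its adjoint, shows that $x\mapsto q\theta(x)p=\widetilde\phi(x)$ preserves the triple product; hence $\widetilde\phi$ is a triple morphism, establishing the unique extension property. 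I expect the main difficulty to be this bootstrapping together with the bookkeeping of the two projections $p$ and $q$: because the setting is rectangular, every intertwining or reducing statement involves $p$ and $q$ acting on opposite sides, and one must repeatedly confirm that the restrictions and compressions produced along the way remain nondegenerate rectangular operator states, so that the definitions of dilation and of the unique extension property actually apply.
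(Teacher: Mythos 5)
Your proof is correct and takes essentially the same route as the paper's: for the implication from the unique extension property to maximality you use, exactly as the paper does, the Haagerup--Paulsen--Wittstock extension followed by Lemma \ref{Lemma:trivial} applied to the compression $q\widetilde{\psi}(\cdot)p$, and for the converse you dilate the extension to a triple morphism via Proposition \ref{Proposition:dilation-TRO}, invoke maximality to get the intertwining relation on $X$, and bootstrap it to all of $T$ using that the intertwining set is a sub-TRO. The only difference is that you make explicit the nondegeneracy verifications and the generation argument that the paper states in a single sentence.
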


\begin{proof}
Suppose initially that $\phi $ is maximal. Let $\widetilde{\phi }%
:T\rightarrow B(H,K)$ be an extension of $\phi $. Let $\theta :T\rightarrow
B(\widetilde{H},\widetilde{K})$ be a dilation of $\widetilde{\phi }$ to a
triple morphism. We can identify $H$ with a subspace of $\widetilde{H}$ and $%
K$ with a subspace of $\widetilde{K}$. Let $p$ and $q$ be the orthogonal
projections of $\widetilde{H}$ and $\widetilde{K}$ onto $H$ and $K$,
respectively. We have that $\widetilde{\phi }(x)=q\theta (x)|_{H}$ for every 
$x\in T$. The restriction of $\theta $ to $X$ is a rectangular operator
state that dilates $\phi $. By maximality of $\phi $, we can conclude that $%
\theta (x)=q\theta (x)p+\left( 1-q\right) \theta (x)\left( 1-p\right) $ for
every $x\in X$. Since $X$ generates $T$ as a TRO and $\theta $ is a triple
morphism, it follows that this identity holds for every $x\in T$. It follows
that $\widetilde{\phi }$ is a triple morphism as well.

Suppose now that $\phi $ has the unique extension property. Let $\psi
:X\rightarrow B(\widetilde{H},\widetilde{K})$ be a dilation of $\phi $. As
above, we will identify $H$ and $K$ as subspaces of $\widetilde{H}$ and $%
\widetilde{K}$, respectively, and denote by $p$ and $q$ the corresponding
orthogonal projections. We can extend $\psi $ to a rectangular operator
state $\psi :T\rightarrow B(\widetilde{H},\widetilde{K})$. Observe that $%
x\mapsto q\psi (x)|_{H}$ is a rectangular operator state extending $\phi $.
Since $\phi $ has the unique extension property, $x\mapsto q\psi (x)|_{H}$
is a triple morphism. Hence from Lemma \ref{Lemma:trivial} we
can conclude that $\psi $ is a trivial dilation of $\phi $. Since $\psi $
was arbitrary, we can conclude that $\phi $ is maximal.
\end{proof}

Simple examples show that the implication ``unique extension property implies maximal''
of Proposititon \ref{Proposition:maximal-uep} may fail if $\phi$ is a degenerate completely
contractive map. Indeed, there are degenerate representations of TROs which have non-trivial dilations.

\subsection{Boundary representations\label{Subs:boudnary}}

Suppose that $X$ is an operator space, and $T$ is a TRO containing $X$ as a
generating subspace.

\begin{definition}
\label{Definition:bound-rep}A \emph{boundary representation }for $X$ is a
rectangular operator state $\phi :X\rightarrow B(H,K)$ with the property
that any rectangular operator state on $T$ extending $X$ is an irreducible
representation of $T$.
\end{definition}

In other words, a rectangular operator state $\phi: X \to B(H,K)$ is a
boundary representation for $X$ if and only if it has the unique extension
property, and the unique extension of $\phi$ to $T$ is an irreducible
representation of $T$. In the following we will identify a boundary
representation of $X$ with its unique extension to an irreducible
representation of $T$. It follows from Proposition \ref%
{Proposition:maximal-uep} that the notion of boundary representation does
not depend on the concrete realization of $X$ as a space of operators. We
remark that our terminology differs slightly from Arveson's original use of
the term boundary representation in the context of operator systems \cite%
{arveson_subalgebras_1969}. Indeed, for Arveson, a boundary representation
is a representation of the $C^*$-algebra generated by the operator system.
More precisely, if $S$ is an operator system that generates the $C^*$%
-algebra $A$, then according to Arveson, a boundary representation for $S$
is an irreducible representation $\pi$ of $A$ such that $\pi$ is the unique
completely positive extension of $\pi|_S$. We follow the convention, which
is for example used in \cite{davidson_choquet_2013}, that a boundary
representation of $S$ is a unital completely positive map $\phi:S \to B(H)$
such that every extension of $\phi$ to a completely positive map on $A$ is
an irreducible representation of $A$. Since this notion does not depend on
the concrete representation of $S$, these two points of view are equivalent.

In the rest of this section, we will observe that the boundary
representations of $X$ completely norm $X$. This will be deduced from the
corresponding fact about operator systems, proved in \cite%
{arveson_noncommutative_2008} in the separable case and in \cite%
{davidson_choquet_2013} in full generality.

\begin{proposition}
\label{Proposition:Paulsen-boundary}Suppose $\omega :\mathcal{S}%
(X)\rightarrow B(L_{\omega })$ is a boundary representation of the Paulsen
system $\mathcal{S}(X)$ associated with $X$. Then one can decompose $%
L_{\omega }$ as an orthogonal direct sum $K_{\omega }\oplus H_{\omega }$ in
such a way that $\omega =\mathcal{S}(\psi )$ for some boundary
representation $\psi :X\rightarrow B(H_{\omega },K_{\omega })$ of $X$.
\end{proposition}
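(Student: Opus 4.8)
The plan is to recover $\psi$ as the ``$1$-$2$ corner'' of $\omega$, reversing the Paulsen construction. Since $\omega$ is a boundary representation of the operator system $\mathcal{S}(X)$, it has a unique completely positive extension to $A:=C^{\ast}(\mathcal{S}(X))$, and this extension is an irreducible $\ast$-representation $\pi$ on $L_{\omega}$. The two diagonal matrix units $e:=\begin{bmatrix}I_{K}&0\\0&0\end{bmatrix}$ and $f:=\begin{bmatrix}0&0\\0&I_{H}\end{bmatrix}$ are orthogonal projections in $\mathcal{S}(X)$ summing to the unit, so $q:=\pi(e)$ and $p:=\pi(f)$ are orthogonal projections on $L_{\omega}$ with $q+p=1$; set $K_{\omega}:=qL_{\omega}$ and $H_{\omega}:=pL_{\omega}$. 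Writing $\iota(x):=\begin{bmatrix}0&x\\0&0\end{bmatrix}\in\mathcal{S}(X)$ and using $\iota(x)=e\,\iota(x)\,f$ together with multiplicativity of $\pi$, one gets $\omega(\iota(x))=q\,\omega(\iota(x))\,p$, so I define $\psi(x):=q\,\omega(\iota(x))\,p$, regarded as a map $H_{\omega}\to K_{\omega}$. A short computation on the four directions $e,f,\iota(x),\iota(x)^{\ast}$ spanning $\mathcal{S}(X)$, again using that $\pi$ is a $\ast$-homomorphism (so $\omega(\iota(x)^{\ast})=\omega(\iota(x))^{\ast}$), then yields $\omega=\mathcal{S}(\psi)$. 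In particular $\mathcal{S}(\psi)$ is unital completely positive, so $\psi$ is completely contractive by Paulsen's lemma.

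It remains to check (i) that $\psi$ is a genuine rectangular operator state and (ii) that it is a boundary representation. For nondegeneracy, I would use a commutant argument: viewing a projection $q'$ on $K_{\omega}$ and a projection $p'$ on $H_{\omega}$ as subprojections of $q$ and $p$ in $B(L_{\omega})$, if $q'\psi(x)=\psi(x)p'=0$ for all $x\in X$, then $r:=q'+p'$ commutes with $\pi(e),\pi(f)$ and with every $\pi(\iota(x))$ and $\pi(\iota(x)^{\ast})$, hence with all of $\pi(A)$; irreducibility of $\pi$ forces $r\in\{0,1\}$, and as $\psi\neq 0$ the only option is $r=0$, i.e. $p'=q'=0$. (That $\psi\neq 0$ is itself forced by uniqueness of the extension: if $\psi=0$ then $\pi$ kills the linking algebra and factors through a character of $A$, and such an $\omega$ would admit a second completely positive extension built from any state of the corner $TT^{\ast}$, a contradiction.) The delicate point is $\Vert\psi\Vert_{cb}=1$: complete contractivity gives ``$\le$'', while for ``$\ge$'' one uses that $\omega$, being a boundary representation, is maximal; were $\Vert\psi\Vert_{cb}<1$, a proper dilation of the strict complete contraction $\psi$ would, through $\mathcal{S}(\cdot)$, produce a nontrivial dilation of $\omega$, contradicting maximality.

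For (ii), let $\widetilde{\psi}:T\to B(H_{\omega},K_{\omega})$ be any rectangular operator state extending $\psi$. Then $\mathcal{S}(\widetilde{\psi}):\mathcal{S}(T)\to B(L_{\omega})$ is unital completely positive and restricts to $\mathcal{S}(\psi)=\omega$ on $\mathcal{S}(X)$. Extending $\mathcal{S}(\widetilde{\psi})$ to a completely positive map $\rho$ on $A=C^{\ast}(\mathcal{S}(X))=C^{\ast}(\mathcal{S}(T))$ by injectivity of $B(L_{\omega})$, we have $\rho|_{\mathcal{S}(X)}=\omega$, so the unique extension property of $\omega$ forces $\rho=\pi$. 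Hence $\mathcal{S}(\widetilde{\psi})=\pi|_{\mathcal{S}(T)}$ is the restriction of a $\ast$-representation, and reading off the $1$-$2$ corner gives $\widetilde{\psi}=\pi|_{T}=:\Theta$, a triple morphism. Thus $\psi$ has the unique extension property, with unique extension $\Theta$.

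Finally I would show $\Theta$ is irreducible, completing the proof. Given projections $q'$ on $K_{\omega}$ and $p'$ on $H_{\omega}$ with $q'\Theta(t)=\Theta(t)p'$ for all $t\in T$, the same commutant computation shows $r:=q'+p'$ commutes with $\pi(A)$, so $r\in\{0,1\}$ by irreducibility of $\pi$; this means either $q'=p'=0$ or $q'=q$ and $p'=p$, which is exactly irreducibility of $\Theta$. The main obstacle is the passage between the operator-system world of $\omega$ and the operator-space world of $\psi$: one must simultaneously exploit irreducibility of $\pi$ (for nondegeneracy of $\psi$ and irreducibility of $\Theta$) and maximality/unique extension of $\omega$ (for the normalization $\Vert\psi\Vert_{cb}=1$ and for the unique extension property of $\psi$), and in particular break the apparent circularity whereby ``$\psi$ is a rectangular operator state'' seems to be needed before one may even speak of its dilations.
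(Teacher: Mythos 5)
Your proposal is correct, and its core construction is exactly the paper's: extend $\omega$ to an irreducible representation $\pi$ of $A=C^{*}(\mathcal{S}(X))$, split $L_{\omega}$ using the projections $q=\pi(e)$ and $p=\pi(f)$, define $\psi$ as the $1$-$2$ corner of $\pi$ restricted to $X$, observe $\omega=\mathcal{S}(\psi)$, and deduce irreducibility of the triple morphism $\pi|_{T}$ from irreducibility of $\pi$ by the commutant computation. Where you genuinely diverge is in transferring the boundary property from $\omega$ to $\psi$: the paper shows $\psi$ is \emph{maximal} (a dilation $\phi$ of $\psi$ yields the dilation $\mathcal{S}(\phi)$ of $\omega$, which must be trivial because boundary representations of operator systems are maximal) and then invokes Proposition \ref{Proposition:maximal-uep}, whereas you prove the unique extension property of $\psi$ directly: any rectangular operator state $\widetilde{\psi}$ on $T$ extending $\psi$ gives, after an Arveson extension of $\mathcal{S}(\widetilde{\psi})$ to $A=C^{*}(\mathcal{S}(T))$, a unital completely positive extension of $\omega$, which the unique extension property of $\omega$ forces to equal $\pi$, so that $\widetilde{\psi}=\pi|_{T}$. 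Your route bypasses Proposition \ref{Proposition:maximal-uep} at the cost of one extra use of injectivity of $B(L_{\omega})$; the two mechanisms are of comparable weight and both ultimately rest on the boundary property of $\omega$. A real merit of your write-up is that you verify $\psi$ is actually a rectangular operator state --- nondegeneracy, $\psi\neq 0$, and $\Vert\psi\Vert_{cb}=1$ --- points the paper passes over silently. Your $\Vert\psi\Vert_{cb}=1$ step is only sketched, but it is repairable and your idea is the right one: if $\Vert\psi\Vert_{cb}<1$, append a suitably scaled nonzero functional to $\psi$ as an extra row to obtain a completely contractive $\psi'$ for which $\mathcal{S}(\psi')$ is a nontrivial dilation of $\omega$, contradicting maximality of $\omega$; since the dilation is taken in the operator-system sense, it need not itself be nondegenerate or of cb-norm one, so the circularity you worry about at the end does not arise.
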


\begin{proof}
Suppose that $T\subset B(H,K)$ is a TRO containing $X$ as a generating
subspace. Let $A$ be the C*-algebra generated by $\mathcal{S}(X)$ inside $%
B(K\oplus H)$. Observe that%
\begin{equation*}
A=\left\{ 
\begin{bmatrix}
x_{11}+\lambda I_{K} & x_{12} \\ 
x_{21} & x_{22}+\mu I_{H}%
\end{bmatrix}%
:x_{11}\in TT^{\ast }\text{, }x_{12}\in T\text{, }x_{21}\in T^{\ast }\text{, 
}x_{22}\in T^{\ast }T\text{, }\lambda ,\mu \in \mathbb{C}\right\} \text{.}
\end{equation*}%
Since $\omega $ is a boundary representation of $\mathcal{S}(X)$, it extends
to an irreducible representation $\omega :A\rightarrow B(L_{\omega })$. Set%
\begin{equation*}
q_{\omega }:=\omega \left( 
\begin{bmatrix}
I_{K} & 0 \\ 
0 & 0%
\end{bmatrix}%
\right) \text{, and }p_{\omega }:=\omega \left( 
\begin{bmatrix}
0 & 0 \\ 
0 & I_{H}%
\end{bmatrix}%
\right) \text{.}
\end{equation*}%
Observe that $p_{\omega },q_{\omega }\in B(L_{\omega })$ are orthogonal
projections such that $p_{\omega }+q_{\omega }=I_{L_{\omega }}$. Denote by $%
K_{\omega }$ the range of $q_{\omega }$ and by $H_{\omega }$ the range of $%
p_{\omega }$. The fact that $\omega $ is a unital *-homomorphism implies
that, with respect to the decomposition $L_{\omega }=K_{\omega }\oplus
H_{\omega }$ one has that%
\begin{equation*}
\omega =%
\begin{bmatrix}
\sigma & \theta \\ 
\theta ^{\ast } & \pi%
\end{bmatrix}%
\end{equation*}%
where $\theta :T\rightarrow B(H_{\omega },K_{\omega })$ is a triple morphism.

We claim that $\theta $ is irreducible. Suppose that $p\in B(H_{\omega })$
and $q\in B(K_{\omega })$ are projections such that $q\theta (x)=\theta (x)p$
for every $x\in T$. Since $\sigma \left( ab^{\ast }\right) =\theta (a)\theta
(b)^{\ast }$ and $\pi \left( a^{\ast }b\right) =\theta (a)^{\ast }\theta (b)$
for every $a,b\in T$, we can conclude that $\left( q\oplus p\right) \omega
(x) = \omega(x) (q \oplus p)$ for every $x\in A$. Since $\omega $ is an
irreducible representation of $A$, it follows that $q \oplus p = 1$ or $q
\oplus p = 0$. This concludes the proof that $\theta $ is irreducible.

Denote by $\psi $ the restriction of $\theta $ to $X$. Observe that $\omega =%
\mathcal{S}(\psi )$. We claim that $\psi $ is maximal. Indeed, let $\phi
:X\rightarrow B(\widetilde{H},\widetilde{K})$ be a dilation of $\psi $. We
can identify $H$ and $K$ as subspaces of $\widetilde{H}$ and $\widetilde{K}$%
, with corresponding orthogonal projections $p$ and $q$. Then $\mathcal{S}%
(\phi ):\mathcal{S}(X)\rightarrow B(\widetilde{K}\oplus \widetilde{H})$ is a
dilation of $\omega $. By maximality of $\omega $, we have that%
\begin{equation*}
\begin{bmatrix}
q & 0 \\ 
0 & p%
\end{bmatrix}%
\omega (x)=\omega (x)%
\begin{bmatrix}
q & 0 \\ 
0 & p%
\end{bmatrix}%
\end{equation*}%
for every $x\in \mathcal{S}(X)$. It follows that $q\phi (x)=\phi (x)p$ for
every $x\in X$. This shows that $\phi $ is a trivial dilation of $\psi $,
concluding the proof that $\psi $ is maximal.
\end{proof}

The following result is now an immediate consequence of Proposition \ref%
{Proposition:Paulsen-boundary} and \cite[Theorem 3.4]{davidson_choquet_2013}.

\begin{theorem}
\label{Theorem:shilov-boundary}Suppose that $X$ is an operator space. Then $%
X $ is completely normed by its boundary representations.
\end{theorem}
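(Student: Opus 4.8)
The plan is to deduce Theorem \ref{Theorem:shilov-boundary} from the corresponding statement for operator systems via the Paulsen correspondence. The key external input is \cite[Theorem 3.4]{davidson_choquet_2013}, which asserts that any operator system $S$ is completely normed by its boundary representations; applied to the Paulsen system $\mathcal{S}(X)$, this says that for every $n$ and every matrix $(s_{ij}) \in M_n(\mathcal{S}(X))$ one has
\begin{equation*}
\left\| (s_{ij}) \right\| = \sup_\omega \left\| (\omega(s_{ij})) \right\|,
\end{equation*}
where the supremum ranges over all boundary representations $\omega$ of $\mathcal{S}(X)$.

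First I would recall the standard fact, following from the functoriality of the Paulsen construction, that the completely isometric embedding $X \hookrightarrow \mathcal{S}(X)$ sending $x$ to $\begin{bmatrix} 0 & x \\ 0 & 0 \end{bmatrix}$ relates the operator space norm on $M_n(X)$ to the operator system norm on $M_n(\mathcal{S}(X))$. Concretely, the norm of a matrix $(x_{ij}) \in M_n(X)$ equals the norm of its image in $M_n(\mathcal{S}(X))$, so it suffices to show that the norm of this image is computed by the boundary representations of $\mathcal{S}(X)$. This is exactly what \cite[Theorem 3.4]{davidson_choquet_2013} provides. The content of the proof is then to translate each boundary representation $\omega$ of $\mathcal{S}(X)$ into a boundary representation of $X$ in the sense of Definition \ref{Definition:bound-rep}, preserving the relevant norms.

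The crucial step is precisely Proposition \ref{Proposition:Paulsen-boundary}, which I would invoke directly: every boundary representation $\omega$ of $\mathcal{S}(X)$ decomposes as $\omega = \mathcal{S}(\psi)$ for some boundary representation $\psi \colon X \to B(H_\omega, K_\omega)$ of $X$. Conversely, since $\mathcal{S}$ is a functor taking completely contractive maps to unital completely positive maps, one checks that the image of $(x_{ij})$ under $\omega = \mathcal{S}(\psi)$ involves the block $(\psi(x_{ij}))$, so that $\left\| (\psi(x_{ij})) \right\| \le \sup_\psi \left\| (\psi(x_{ij})) \right\|$ bounds the contribution of $\omega$ to the supremum above. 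Combining this with the operator-system Arveson conjecture gives
\begin{equation*}
\left\| (x_{ij}) \right\| = \sup_\omega \left\| (\omega(s_{ij})) \right\| \le \sup_\psi \left\| (\psi(x_{ij})) \right\| \le \left\| (x_{ij}) \right\|,
\end{equation*}
forcing equality throughout and proving that $X$ is completely normed by its boundary representations.

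The main obstacle I anticipate is the bookkeeping in the second inequality: I must verify that the full matrix norm $\left\| (\omega(s_{ij})) \right\|$ over $M_n(\mathcal{S}(X))$, evaluated on the embedded off-diagonal elements coming from $X$, is controlled by the off-diagonal block norm $\left\| (\psi(x_{ij})) \right\|$ rather than by the diagonal scalar entries. Because the embedding $X \hookrightarrow \mathcal{S}(X)$ is completely isometric and $\mathcal{S}(\psi)$ acts blockwise, this reduces to the compatibility of the Paulsen norm with completely contractive maps, which is exactly the functoriality recorded after the definition of $\mathcal{S}(\phi)$ in the preliminaries; once that is in hand the estimate is routine. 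Since Proposition \ref{Proposition:Paulsen-boundary} already does the substantive work of producing $\psi$ and verifying its maximality and irreducibility, the theorem follows essentially immediately, which is why it is stated as an immediate consequence.
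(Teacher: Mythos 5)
Your proposal is correct and follows exactly the paper's route: the paper derives Theorem \ref{Theorem:shilov-boundary} as an immediate consequence of Proposition \ref{Proposition:Paulsen-boundary} together with \cite[Theorem 3.4]{davidson_choquet_2013} applied to the Paulsen system $\mathcal{S}(X)$, which is precisely your argument. The norm bookkeeping you flag is handled just as you suggest, since the corner embedding $x \mapsto \begin{bmatrix} 0 & x \\ 0 & 0 \end{bmatrix}$ is completely isometric and $\omega = \mathcal{S}(\psi)$ acts blockwise, so each boundary representation of $\mathcal{S}(X)$ contributes exactly $\left\Vert (\psi(x_{ij})) \right\Vert$ to the supremum.
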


\subsection{Rectangular extreme points and pure unital completely positive
maps\label{Subs:rectangular}}

Suppose that $X$ is an operator space, and $\phi :X\rightarrow B(H,K)$ is a
completely contractive linear map. A\emph{\ rectangular operator convex
combination} is an expression $\phi =\alpha _{1}^{\ast }\phi _{1}\beta
_{1}+\cdots +\alpha _{n}^{\ast }\phi _{n}\beta _{n}$, where $\beta
_{i}:H\rightarrow H_{i}$ and $\alpha _{i}:K\rightarrow K_{i}$ are linear
maps, and $\phi _{i}:X\rightarrow B(H_{i},K_{i})$ are completely contractive
linear maps for $i=1,2,\ldots ,\ell $ such that $\alpha _{1}^{\ast }\alpha
_{1}+\cdots +\alpha _{n}^{\ast }\alpha _{n}=1$, and $\beta _{1}^{\ast }\beta
_{1}+\cdots +\beta _{n}^{\ast }\beta _{n}=1$. Such a rectangular convex
combination is \emph{proper }if $\alpha _{i},\beta _{i}$ are surjective, and 
$\emph{trivial}$ if $\alpha _{i}^{\ast }\alpha _{i}=\lambda _{i}1$, $\beta
_{i}^{\ast }\beta _{i}=\lambda _{i}1$, and $\alpha _{i}^{\ast }\phi
_{i}\beta _{i}=\lambda _{i}\phi $ for some $\lambda _{i}\in \left[ 0,1\right]
$.

\begin{definition}
\label{Definition:rectangular-operator-extreme}A completely contractive map $%
\phi :X\rightarrow B(H,K)$ is a \emph{rectangular operator extreme point} if
any proper rectangular operator convex combination $\phi =\alpha _{1}^{\ast
}\phi _{1}\beta _{1}+\cdots +\alpha _{n}^{\ast }\phi _{n}\beta _{n}$ is
trivial.
\end{definition}

Suppose now that $X$ is an operator system. An \emph{operator state }on $X$
is a unital completely positive map $\phi :X\rightarrow B(H)$. An \emph{%
operator convex combination }is an expression $\phi =\alpha _{1}^{\ast }\phi
_{1}\alpha _{1}+\cdots +\alpha _{n}^{\ast }\phi _{n}\alpha _{n}$, where $%
\alpha _{i}:H\rightarrow H_{i}$ are linear maps, and $\phi _{i}:X\rightarrow
B(H_{i})$ are operator states for $i=1,2,\ldots ,\ell $ such that $\alpha
_{1}^{\ast }\alpha _{1}+\cdots +\alpha _{n}^{\ast }\alpha _{n}=1$. Such an
operator convex combination is \emph{proper }if $\alpha _{i}$ is right
invertible for $i=1,2,\ldots ,\ell $, and \emph{trivial} if $\alpha
_{i}^{\ast }\alpha _{i}=\lambda _{i}1$ and $\alpha _{i}^{\ast }\phi
_{i}\alpha _{i}=\lambda _{i}\phi $ for some $\lambda _{i}\in \left[ 0,1%
\right] $.

We say that $\phi $ is an \emph{operator extreme point }if any proper
operator convex combination $\phi =\alpha _{1}^{\ast }\phi _{1}\alpha
_{1}+\cdots +\alpha _{n}^{\ast }\phi _{n}\alpha _{n}$ is trivial. The proof
of \cite[Theorem B]{farenick_extremal_2000} shows that an operator state is
an operator extreme point if and only if it is a pure element in the cone of
completely positive maps.

When $H$ is finite-dimensional, the notion of proper operator convex
combination coincides with the notion of proper matrix convex combination
from \cite{webster_krein-milman_1999}. In this case, the notion of operator
extreme point coincides with the notion of matrix extreme point from \cite[%
Definition 2.1]{webster_krein-milman_1999}.

\begin{lemma}
\label{Lemma:below-Paulsen}Suppose that $\phi :X\rightarrow B(H,K)$ is a
completely contractive linear map, $\Psi :\mathcal{S}(X)\rightarrow
B(K\oplus H)$ is a completely positive map such that $\mathcal{S}(\phi
)-\Psi $ is completely positive. Suppose that $\Psi (1)$ is an invertible
element of $B(K\oplus H)$. Then there exist positive invertible elements $%
a\in B(K)$ and $b\in B(H)$, and a completely contractive map $\psi
:X\rightarrow B(H,K)$ such that%
\begin{equation*}
\Psi \left( 
\begin{bmatrix}
\lambda & x \\ 
y^{\ast } & \mu%
\end{bmatrix}%
\right) =%
\begin{bmatrix}
a & 0 \\ 
0 & b%
\end{bmatrix}%
\begin{bmatrix}
\lambda I_{K} & \psi (x) \\ 
\psi (y)^{\ast } & \mu I_{H}%
\end{bmatrix}%
\begin{bmatrix}
a & 0 \\ 
0 & b%
\end{bmatrix}%
\text{.}
\end{equation*}
\end{lemma}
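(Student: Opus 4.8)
The plan is to read off $a$ and $b$ from the single value $\Psi(1)$ by testing $\Psi$ on the two diagonal projections of the Paulsen system, then to renormalize $\Psi$ into a \emph{unital} completely positive map and exploit a multiplicative-domain argument.

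First I would consider the projections $e = \begin{bmatrix} I_K & 0\\0&0\end{bmatrix}$ and $f = \begin{bmatrix}0&0\\0&I_H\end{bmatrix}$, which both lie in $\mathcal{S}(X)$ and satisfy $e + f = 1$. Since $\mathcal{S}(\phi)$ is unital we have $\mathcal{S}(\phi)(e) = e$, so the hypothesis that $\mathcal{S}(\phi)-\Psi$ and $\Psi$ are completely positive gives $0 \le \Psi(e) \le e$. Writing $\Psi(e) = \begin{bmatrix}P&R\\R^*&S\end{bmatrix}$, the upper bound forces $S \le 0$ while positivity forces $S \ge 0$; hence $S = 0$, and positivity of a block operator with vanishing $(2,2)$-corner then forces $R = 0$. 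Thus $\Psi(e) = \begin{bmatrix}a^2&0\\0&0\end{bmatrix}$ for some positive $a\in B(K)$, and symmetrically $\Psi(f) = \begin{bmatrix}0&0\\0&b^2\end{bmatrix}$ for some positive $b\in B(H)$. Consequently $\Psi(1) = \begin{bmatrix}a^2&0\\0&b^2\end{bmatrix}$, and the hypothesis that $\Psi(1)$ is invertible is exactly what makes $a$ and $b$ invertible. This is the step where that hypothesis is essential; without it the renormalization below is impossible.

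Next I would set $D := \begin{bmatrix}a&0\\0&b\end{bmatrix}$, a positive invertible element of $B(K\oplus H)$ with $D^2 = \Psi(1)$, and define $\Phi := \operatorname{Ad}_{D^{-1}}\circ\Psi$, that is $\Phi(s) = D^{-1}\Psi(s)D^{-1}$. Then $\Phi$ is completely positive and, crucially, unital, since $\Phi(1) = D^{-1}\Psi(1)D^{-1} = I$; moreover a direct computation gives $\Phi(e) = e$ and $\Phi(f) = f$.

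Finally I would extend $\Phi$ to a unital completely positive map $\widetilde\Phi$ on the $C^*$-algebra $A$ generated by $\mathcal{S}(X)$ (Arveson extension) and invoke the multiplicative-domain machinery, which applies only because $\widetilde\Phi$ is now unital. As $\widetilde\Phi$ is $2$-positive and $e$ is a projection with $\widetilde\Phi(e) = e$, the Schwarz inequality is an equality at $e$ and at $f$, so both lie in the multiplicative domain of $\widetilde\Phi$. Writing $\hat x = \begin{bmatrix}0&x\\0&0\end{bmatrix}$ for $x\in X$ and noting $\hat x = e\,\hat x\, f$, the bimodule property yields $\Phi(\hat x) = e\,\Phi(\hat x)\,f$, so $\Phi(\hat x) = \begin{bmatrix}0&\psi(x)\\0&0\end{bmatrix}$ where I take $\psi(x)$ to be the $(1,2)$-block; this defines a linear map $\psi : X \to B(H,K)$, and $*$-preservation of $\Phi$ handles the lower-left corner. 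Assembling the four corners shows $\Phi = \mathcal{S}(\psi)$, and since $\mathcal{S}(\psi)$ is unital completely positive, Paulsen's characterization forces $\psi$ to be completely contractive. Undoing the renormalization gives $\Psi = \operatorname{Ad}_D\circ\mathcal{S}(\psi)$, which is the claimed identity. The only genuine obstacle is the non-unitality of $\Psi$, which blocks a direct multiplicative-domain argument; the renormalization by $D^{-1}$, legitimized precisely by the invertibility of $\Psi(1)$, is what circumvents it.
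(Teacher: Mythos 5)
Your proof is correct, and it shares its skeleton with the paper's: establish that $\Psi$ respects the corner structure of $B(K\oplus H)$, renormalize by the inverse square root of $\Psi(1)$ to obtain a unital completely positive map fixing the two diagonal projections, identify the renormalized map as $\mathcal{S}(\psi)$ via a multiplicative-domain argument, and invoke Paulsen's lemma to conclude that $\psi$ is completely contractive. The genuine difference is in how the corner structure is obtained, and where Arveson's extension theorem enters. The paper first extends $\Psi$ and $\mathcal{S}(\phi)-\Psi$ to the C*-algebra $A$ generated by $\mathcal{S}(X)$, keeping the difference completely positive, and then runs the corner-preservation argument from the proof of Theorem 8.3 in Paulsen's book, concluding that $\Psi$ maps the full diagonal corners $TT^*+\mathbb{C}I$ and $T^*T+\mathbb{C}I$ into the corresponding corners of $B(K\oplus H)$. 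You bypass this: the hypotheses give $0\le\Psi(e)\le\mathcal{S}(\phi)(e)=e$ and $0\le\Psi(f)\le f$ directly on the Paulsen system, and the elementary fact that a positive $2\times 2$ block operator with vanishing $(2,2)$-corner has vanishing off-diagonal corners pins down $\Psi(e)$ and $\Psi(f)$, which is all the renormalization requires. You then apply Arveson only once, at the very end, to the single unital map $\Phi=D^{-1}\Psi(\cdot)D^{-1}$, exactly where the multiplicative-domain theorem genuinely needs a C*-algebra domain. Your variant is more self-contained (no extension of a pair of maps with completely positive difference, no appeal to the internals of an external proof), at the cost of proving less: the paper's route shows $\Psi$ is corner-preserving on all diagonal elements of $A$, while yours controls it only at $e$, $f$, and $1$ --- but that is all the lemma needs. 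One small slip in wording: $\mathcal{S}(\phi)(e)=e$ does not follow from unitality of $\mathcal{S}(\phi)$; it is immediate from the definition of $\mathcal{S}(\phi)$, which sends the scalar diagonal to the scalar diagonal.
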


\begin{proof}
Fix a concrete representation $X\subset B(L_{0},L_{1})$ of $X$. In this case 
$\mathcal{S}(X)\subset B(L_{1}\oplus L_{0})$. Set $\Phi :=\mathcal{S}(\phi )$
and let $T\subset B(L_{0},L_{1})$ denote the TRO generated by $X$. By
Arveson's extension theorem, we may extend $\Phi $ and $\Psi $ to the
C*-algebra $A$ generated by $\mathcal{S}(X)$ inside $B(L_{1}\oplus L_{0})$
in such a way that $\Phi - \Psi$ is still completely positive.
In the following we regard $\Phi ,\Psi $ as maps from $A$ to $B(K\oplus H)$.
Since $\Phi -\Psi $ is completely positive, the argument in the proof of 
\cite[Theorem 8.3]{paulsen_completely_2002} shows that there exist linear
maps $\varphi _{1}:TT^{\ast }+\mathbb{C}I_{L_{1}}\rightarrow B(K)$ and $%
\varphi _{0}:T^{\ast }T+\mathbb{C}I_{L_{0}}\rightarrow B(H)$ such that 
\begin{equation*}
\Psi \left( 
\begin{bmatrix}
x & 0 \\ 
0 & 0%
\end{bmatrix}%
\right) =%
\begin{bmatrix}
\varphi _{1}(x) & 0 \\ 
0 & 0%
\end{bmatrix}%
.
\end{equation*}%
and 
\begin{equation*}
\Psi \left( 
\begin{bmatrix}
0 & 0 \\ 
0 & y%
\end{bmatrix}%
\right) =%
\begin{bmatrix}
0 & 0 \\ 
0 & \varphi _{0}(y)%
\end{bmatrix}%
.
\end{equation*}%
In particular, $w=\Psi (1)$ is a diagonal element, and the unital completely
positive map $\Psi _{0}=w^{-1/2}\Psi w^{-1/2}$ satisfies 
\begin{equation*}
\Psi _{0}\left( 
\begin{bmatrix}
I_{L_{1}} & 0 \\ 
0 & 0%
\end{bmatrix}%
\right) =%
\begin{bmatrix}
I_{K} & 0 \\ 
0 & 0%
\end{bmatrix}%
\quad \text{ and }\quad \Psi _{0}\left( 
\begin{bmatrix}
0 & 0 \\ 
0 & I_{L_{0}}%
\end{bmatrix}%
\right) =%
\begin{bmatrix}
0 & 0 \\ 
0 & I_{H}%
\end{bmatrix}%
.
\end{equation*}%
It follows that the two projections $%
\begin{bmatrix}
I_{L_{1}} & 0 \\ 
0 & 0%
\end{bmatrix}%
$ and $%
\begin{bmatrix}
0 & 0 \\ 
0 & I_{L_{0}}%
\end{bmatrix}%
$ belong to the multiplicative domain of $\Psi _{0}$ \cite[Proposition 1.3.11%
]{blecher_operator_2004}, so that there exists a completely contractive map $%
\psi :X\rightarrow B(H,K)$ such that 
\begin{equation*}
\Psi _{0}\left( 
\begin{bmatrix}
\lambda I_{L_{1}} & x \\ 
y^{\ast } & \mu I_{L_{0}}%
\end{bmatrix}%
\right) =%
\begin{bmatrix}
\lambda I_{K} & \psi (x) \\ 
\psi (y)^{\ast } & \mu I_{H}%
\end{bmatrix}%
,
\end{equation*}%
which finishes the proof.
\end{proof}

\begin{proposition}
\label{Proposition:characterize-rectangular-extreme}Suppose that $\phi
:X\rightarrow B(H,K)$ is a completely contractive map and $\mathcal{S}(\phi
):\mathcal{S}(X)\rightarrow B(K\oplus H)$ is the associated unital
completely positive map defined on the Paulsen system. The following
assertions are equivalent:

\begin{enumerate}
\item $\mathcal{S}(\phi )$ is a pure completely positive map;

\item $\mathcal{S}(\phi )$ is an operator extreme point;

\item $\phi $ is a rectangular operator extreme point.
\end{enumerate}
\end{proposition}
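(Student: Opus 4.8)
The plan is to treat $(1)\Leftrightarrow(2)$ as essentially free and to concentrate the work on translating between the \emph{rectangular} structure of $\phi$ and the \emph{operator} structure of $\mathcal{S}(\phi)$. Since $\phi$ is completely contractive, $\mathcal{S}(\phi)$ is a unital completely positive map, i.e.\ an operator state on $\mathcal{S}(X)$; thus the equivalence $(1)\Leftrightarrow(2)$ is exactly the characterization of operator extreme points as pure completely positive maps recorded above from \cite[Theorem B]{farenick_extremal_2000}. It then suffices to prove the two implications $(2)\Rightarrow(3)$ and $(3)\Rightarrow(1)$, which together with the cited fact close the cycle $(1)\Rightarrow(2)\Rightarrow(3)\Rightarrow(1)$.

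For $(2)\Rightarrow(3)$, I would start from a proper rectangular operator convex combination $\phi=\sum_{i=1}^n\alpha_i^{*}\phi_i\beta_i$, with $\alpha_i:K\to K_i$ and $\beta_i:H\to H_i$ surjective and $\phi_i:X\to B(H_i,K_i)$ completely contractive. The key device is the substitution $\gamma_i:=\alpha_i\oplus\beta_i:K\oplus H\to K_i\oplus H_i$, which converts this into an operator convex combination of $\mathcal{S}(\phi)$: a direct matrix computation gives
\[
\sum_i \gamma_i^{*}\,\mathcal{S}(\phi_i)\,\gamma_i=\mathcal{S}(\phi),\qquad \sum_i\gamma_i^{*}\gamma_i=\Bigl(\sum_i\alpha_i^{*}\alpha_i\Bigr)\oplus\Bigl(\sum_i\beta_i^{*}\beta_i\Bigr)=1 .
\]
Each $\mathcal{S}(\phi_i)$ is an operator state, and each $\gamma_i$ is right invertible because $\alpha_i,\beta_i$ are surjective, so this is a proper operator convex combination. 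Operator extremality of $\mathcal{S}(\phi)$ forces it to be trivial, i.e.\ $\gamma_i^{*}\gamma_i=\lambda_i 1$ and $\gamma_i^{*}\mathcal{S}(\phi_i)\gamma_i=\lambda_i\mathcal{S}(\phi)$. Reading off the diagonal and the $(1,2)$-corner of these identities yields $\alpha_i^{*}\alpha_i=\lambda_i 1$, $\beta_i^{*}\beta_i=\lambda_i 1$ and $\alpha_i^{*}\phi_i\beta_i=\lambda_i\phi$, which is precisely triviality of the original rectangular combination.

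For $(3)\Rightarrow(1)$, I would first treat a well-conditioned case and then reduce to it. Suppose $\Psi$ is completely positive with $\mathcal{S}(\phi)-\Psi$ completely positive, and momentarily assume both $\Psi(1)$ and $(\mathcal{S}(\phi)-\Psi)(1)$ are invertible. Applying Lemma~\ref{Lemma:below-Paulsen} to $\Psi$ and to $\mathcal{S}(\phi)-\Psi$ separately produces positive invertible $a,a'\in B(K)$, $b,b'\in B(H)$ and completely contractive $\psi,\psi':X\to B(H,K)$ putting both maps in the stated diagonal-conjugation form. Adding the two and comparing entrywise with $\mathcal{S}(\phi)$ gives $a^2+a'^2=1$, $b^2+b'^2=1$ and $\phi=a\psi b+a'\psi'b'$, a proper rectangular operator convex combination (properness because $a,a',b,b'$ are invertible). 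Hypothesis $(3)$ forces this to be trivial, so $a=\sqrt{\lambda}\,1$, $b=\sqrt{\lambda}\,1$ and $a\psi b=\lambda\phi$, whence a final entrywise check gives $\Psi=\lambda\,\mathcal{S}(\phi)$. To remove the invertibility assumption, given arbitrary $\Psi$ with $0\le\Psi\le\mathcal{S}(\phi)$ I would set $\Psi_\epsilon:=(1-2\epsilon)\Psi+\epsilon\,\mathcal{S}(\phi)$ for $0<\epsilon<\tfrac12$; then $\Psi_\epsilon(1)\ge\epsilon\,1$ and $(\mathcal{S}(\phi)-\Psi_\epsilon)(1)=(1-\epsilon)1-(1-2\epsilon)\Psi(1)\ge\epsilon\,1$ are invertible, and both $\Psi_\epsilon$ and $\mathcal{S}(\phi)-\Psi_\epsilon=(1-2\epsilon)(\mathcal{S}(\phi)-\Psi)+\epsilon\,\mathcal{S}(\phi)$ are completely positive. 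The previous step applies to $\Psi_\epsilon$ and yields $\Psi_\epsilon=\lambda_\epsilon\,\mathcal{S}(\phi)$, so solving gives $\Psi=\frac{\lambda_\epsilon-\epsilon}{1-2\epsilon}\,\mathcal{S}(\phi)$, a scalar multiple of $\mathcal{S}(\phi)$, which is purity.

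The main obstacle is exactly this invertibility gap in $(3)\Rightarrow(1)$: Lemma~\ref{Lemma:below-Paulsen} converts a map dominated by $\mathcal{S}(\phi)$ into usable rectangular data only when its value at the unit is invertible, whereas purity must be tested against \emph{every} $\Psi$ in the order interval $[0,\mathcal{S}(\phi)]$, including degenerate ones. The perturbation $\Psi\mapsto(1-2\epsilon)\Psi+\epsilon\,\mathcal{S}(\phi)$ is the device that simultaneously regularizes $\Psi$ and its complement while keeping both inside the interval, and checking this—together with the entrywise bookkeeping that transports triviality between the rectangular and Paulsen pictures—is where care is needed. By comparison $(2)\Rightarrow(3)$ is largely mechanical once the substitution $\gamma_i=\alpha_i\oplus\beta_i$ is identified.
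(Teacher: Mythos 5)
Your proposal is correct and follows essentially the same route as the paper's proof: the Farenick citation for $(1)\Leftrightarrow(2)$, the substitution $\gamma_i=\alpha_i\oplus\beta_i$ for $(2)\Rightarrow(3)$, and Lemma~\ref{Lemma:below-Paulsen} combined with an $\varepsilon$-regularization inside the order interval for $(3)\Rightarrow(1)$. Your perturbation $(1-2\epsilon)\Psi+\epsilon\,\mathcal{S}(\phi)$ is the paper's $\Xi_i=(1-\varepsilon)\Psi_i+(\varepsilon/2)\mathcal{S}(\phi)$ up to reparametrization, and your algebraic back-solving for $\Psi$ is a slightly more explicit rendering of the paper's concluding step.
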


\begin{proof}
We have already observed that the equivalence of (1) and (2) holds, as the
argument in the proof of \cite[Theorem B]{farenick_extremal_2000} shows.

(2)$\implies $(3) Suppose that $\phi =\alpha _{1}^{\ast }\phi _{1}\beta
_{1}+\cdots +\alpha _{\ell }^{\ast }\phi _{\ell }\beta _{\ell }$ is a proper
rectangular matrix convex combination. Define $\gamma _{i}=\alpha _{i}\oplus
\beta _{i}$ for $i=1,2,\ldots ,\ell $. Then we have that $\mathcal{S}(\phi
)=\gamma _{1}^{\ast }\mathcal{S}\left( \phi _{1}\right) \gamma _{1}+\cdots
+\gamma _{\ell }^{\ast }\mathcal{S}\left( \phi _{\ell }\right) \gamma _{\ell
}$ is a proper matrix convex combination. Since by assumption $\mathcal{S}%
(\phi )$ is an operator extreme point in the state space of $\mathcal{S}(X)$,
we can conclude that the proper matrix convex combination $\gamma _{1}^{\ast
}\mathcal{S}\left( \phi _{1}\right) \gamma _{1}+\cdots +\gamma _{\ell
}^{\ast }\mathcal{S}\left( \phi _{\ell }\right) \gamma _{\ell }$ is trivial.
This implies that the proper rectangular matrix convex combination $\alpha
_{1}^{\ast }\phi _{1}\beta _{1}+\cdots +\alpha _{\ell }^{\ast }\phi _{\ell
}\beta _{\ell }$ is trivial as well.

(3)$\implies $(1) Suppose that $\mathcal{S}(\phi )=\Psi _{1}+\Psi _{2}$ for
some completely positive maps $\Psi _{1},\Psi _{2}:\mathcal{S}(X)\rightarrow
B(K\oplus H)$. Fix $\varepsilon >0$ and define $\Xi _{i}=\left(
1-\varepsilon \right) \Psi _{i}+\left( \varepsilon /2\right) \mathcal{S}%
(\phi )$ for $i=1,2$. Then $\Xi _{1},\Xi _{2}:\mathcal{S}(X)\rightarrow
B(K\oplus H)$ are completely positive maps such that $\Xi _{1}+\Xi _{2}=%
\mathcal{S}(\phi )$ and $\Xi _{i}(1)$ is invertible for $i=1,2$; cf.\ the
proof of \cite[Lemma 2.3]{davidson_choquet_2013}. By Lemma \ref%
{Lemma:below-Paulsen} we have that, for $i=1,2$, 
\begin{equation*}
\Xi _{i}\left( 
\begin{bmatrix}
\lambda & x \\ 
y^{\ast } & \mu%
\end{bmatrix}%
\right) =%
\begin{bmatrix}
a_{i} & 0 \\ 
0 & b_{i}%
\end{bmatrix}%
\begin{bmatrix}
\lambda 1 & \psi _{i}(x) \\ 
\psi _{i}(y)^{\ast } & \mu 1%
\end{bmatrix}%
\begin{bmatrix}
a_{i} & 0 \\ 
0 & b_{i}%
\end{bmatrix}%
\end{equation*}%
for some positive invertible elements $a_{i}\in B(K)$, $b_{i}\in B(H)$, and
completely contractive $\psi _{i}:X\rightarrow B(H,K)$. Thus we have
that $\phi =a_{1}\psi _{1}b_{1}+a_{2}\psi _{2}b_{2}$ is a proper rectangular
operator convex combination. By assumption, we have that $a_{i}^{2}=t_{i}1$, 
$b_{i}^{2}=t_{i}1$, and $a_{i}\psi _{i}b_{i}=t_{i}\phi $ for some $t_{i}\in %
\left[ 0,1\right] $ and $i=1,2$. It follows that $\Xi _{i}=t_{i}\mathcal{S}%
(\phi )$ for $i=1,2$. Since this is true for every $\varepsilon $, it
follows that the $\Psi _{i}$ are also scalar multiples of $\mathcal{S}(\phi
) $. This concludes the proof that $\mathcal{S}(\phi )$ is pure.
\end{proof}

The following corollary is an immediate consequence of Proposition \ref%
{Proposition:characterize-rectangular-extreme},\ Proposition \ref%
{Proposition:Paulsen-boundary}, and \cite[Theorem 2.4]{davidson_choquet_2013}%
.

\begin{corollary}
Suppose that $\phi :X\rightarrow B(H,K)$ is a rectangular operator state. If 
$\phi $ is rectangular operator extreme, then $\phi $ admits a dilation to a
boundary representation of $X$.
\end{corollary}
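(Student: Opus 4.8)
The plan is to transport everything to the Paulsen system, apply the operator system version of the result (Davidson--Kennedy), and then descend the resulting dilation back to the operator space. First I would use Proposition \ref{Proposition:characterize-rectangular-extreme}: since $\phi$ is a rectangular operator extreme point, the unital completely positive map $\mathcal{S}(\phi):\mathcal{S}(X)\to B(K\oplus H)$ is pure. By \cite[Theorem 2.4]{davidson_choquet_2013}, a pure unital completely positive map on an operator system dilates to a boundary representation; hence there is a boundary representation $\omega:\mathcal{S}(X)\to B(L_\omega)$ of $\mathcal{S}(X)$ and an isometry $V:K\oplus H\to L_\omega$ with $\mathcal{S}(\phi)=V^{\ast }\omega V$. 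Applying Proposition \ref{Proposition:Paulsen-boundary} to $\omega$, I may decompose $L_\omega=K_\omega\oplus H_\omega$ and write $\omega=\mathcal{S}(\psi)$ for a boundary representation $\psi:X\to B(H_\omega,K_\omega)$ of $X$.

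The key step, and the one requiring genuine care, is to check that the dilation isometry $V$ respects the off-diagonal/diagonal block structure of the two Paulsen systems, so that it splits as $V=w\oplus v$ for isometries $v:H\to H_\omega$ and $w:K\to K_\omega$. To see this I would test the dilation identity on the diagonal projection $e=\begin{bmatrix} I_K & 0 \\ 0 & 0\end{bmatrix}\in\mathcal{S}(X)$. By definition of the two Paulsen maps, $\mathcal{S}(\phi)(e)=q$ and $\omega(e)=q_\omega$, where $q=\begin{bmatrix} I_K & 0 \\ 0 & 0\end{bmatrix}\in B(K\oplus H)$ and $q_\omega\in B(L_\omega)$ are the orthogonal projections onto the $K$- and $K_\omega$-components. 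The dilation identity then gives $V^{\ast }q_\omega V=q$, and since $V$ is an isometry a standard norm computation (for $\xi$ in the range of $q$, $\|q_\omega V\xi\|=\|V\xi\|=\|\xi\|$, while for $\xi$ in its kernel $\|q_\omega V\xi\|=0$) forces $q_\omega V=Vq$. Consequently $V$ carries $K$ into $K_\omega$ and $H$ into $H_\omega$, yielding the claimed decomposition $V=w\oplus v$ with $w,v$ isometries.

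With the decomposition in hand, the conclusion is immediate bookkeeping. Writing out $\mathcal{S}(\phi)=V^{\ast }\mathcal{S}(\psi)V$ on a general element $\begin{bmatrix}\lambda I_K & x \\ y^{\ast } & \mu I_H\end{bmatrix}$ and comparing the $(1,2)$ entries gives
\begin{equation*}
\phi(x)=w^{\ast }\psi(x)v\qquad(x\in X),
\end{equation*}
which is exactly the statement that $\psi$ is a dilation of $\phi$ via the isometries $v$ and $w$. Since $\psi$ is a boundary representation of $X$ by Proposition \ref{Proposition:Paulsen-boundary}, this exhibits the desired dilation of $\phi$ to a boundary representation. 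The only nontrivial point is the middle paragraph, where one must argue that the operator-system dilation is compatible with the corner structure of the Paulsen systems; the reduction at the start and the entry-comparison at the end are formal.
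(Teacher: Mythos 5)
Your proof is correct and takes essentially the same route as the paper, which obtains this corollary exactly as an immediate consequence of Proposition \ref{Proposition:characterize-rectangular-extreme}, the Davidson--Kennedy dilation theorem for pure unital completely positive maps, and Proposition \ref{Proposition:Paulsen-boundary}. Your middle paragraph (checking that the dilation isometry intertwines the diagonal projections, hence splits as $w \oplus v$) is precisely the bookkeeping the paper leaves implicit, and your argument for it is sound.
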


Suppose that $X$ is a Banach space.
We regard $X$ as an operator space endowed with its canonical \emph{minimal }%
operator space structure obtained from the canonical inclusion of $X$ in the
C*-algebra $C\left( \mathrm{\mathrm{Ball}}\left( X^{\prime }\right) \right) $,
where $X'$ denotes the dual space of $X$.
In \cite{alfsen_structure_1972-1} Alfsen
and Effros considered the following notion. Suppose that $\phi _{0},\phi
_{1} $ are nonzero contractive linear functionals on $X$. Then $\phi
_{0},\phi _{1}$ are \emph{codirectional }if $\left\Vert \phi _{0}+\phi
_{1}\right\Vert =\left\Vert \phi _{0}\right\Vert +\left\Vert \phi
_{1}\right\Vert $. This is equivalent to the assertion that
$||\phi_0 + \phi_1|| \mathcal{S}(%
\frac{\phi _{0}+\phi _{1}}{\left\Vert \phi _{0}+\phi _{1}\right\Vert }%
)=\left\Vert \phi _{0}\right\Vert \mathcal{S}(\frac{\phi _{0}}{\left\Vert
\phi _{0}\right\Vert })+\left\Vert \phi _{1}\right\Vert \mathcal{S(}\frac{%
\phi _{1}}{\left\Vert \phi _{1}\right\Vert })$. The relation $\prec $ on
nonzero contractive linear functionals is defined by setting $\phi \prec
\psi $ if and only if either $\phi =\psi $ or $\phi $ and $\psi -\phi $ are
codirectional. This is equivalent to the assertion that $\left\Vert \phi
\right\Vert \mathcal{S}(\frac{\phi }{\left\Vert \phi \right\Vert })\leq 
\left\Vert \psi
\right\Vert \mathcal{S}(\frac{\psi }{\left\Vert \psi \right\Vert })$.

\begin{proposition}
\label{Proposition:pointy}Suppose that $X$ is a Banach space and $\phi $ is
a bounded linear functional on $X$ of norm $1$. The following statements are
equivalent:

\begin{enumerate}
\item $\phi $ is an extreme point of $\mathrm{Ball}(X^{\prime })$;

\item if $\psi \in \mathrm{Ball}(X^{\prime })$ is a nonzero linear
functional such that $\psi \prec \phi $, then $\psi $ is a scalar multiple
of $\phi $;

\item $\phi $ is a rectangular extreme point.
\end{enumerate}
\end{proposition}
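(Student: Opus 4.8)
The plan is to establish the equivalences by routing everything through the Paulsen system and the characterizations already developed in this section. For a norm-one functional $\phi$ on a Banach space $X$ (with its minimal operator space structure), the associated map $\mathcal{S}(\phi):\mathcal{S}(X)\to B(\mathbb{C}\oplus\mathbb{C})=M_2$ is a unital completely positive map, i.e.\ an ordinary state on the Paulsen system when one recalls that $H=K=\mathbb{C}$ here. The key conceptual point is that in the scalar case $\mathcal{S}(X)$, rectangular operator convexity collapses to ordinary convexity: the coefficients $\alpha_i,\beta_i$ are scalars, and the completely contractive maps $\phi_i$ are themselves norm-one functionals, so a proper rectangular operator convex combination of $\phi$ is exactly a genuine convex combination $\phi=\sum t_i\,\psi_i$ with $\sum t_i=1$, $t_i>0$, and each $\psi_i$ a unit-norm functional.

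\emph{(1)$\iff$(3).} I would prove this directly from the definitions. If $\phi$ is a rectangular extreme point and $\phi=t\psi_0+(1-t)\psi_1$ is a nontrivial convex decomposition with $\psi_0,\psi_1\in\mathrm{Ball}(X')$ of norm $1$, then writing $\alpha_i=\beta_i=\sqrt{t_i}$ exhibits this as a proper rectangular operator convex combination; triviality forces $\psi_i$ to be a scalar multiple of $\phi$, hence $\psi_i=\phi$ by the norm-one constraint, so $\phi$ is extreme in the unit ball. Conversely, given a proper rectangular operator convex combination $\phi=\sum\alpha_i^*\phi_i\beta_i$ with scalar data, one checks $\|\phi_i\|=1$ is forced (by $\sum\alpha_i^*\alpha_i=\sum\beta_i^*\beta_i=1$ together with $\|\phi\|=1$ and the triangle inequality, each $\alpha_i^*\phi_i\beta_i$ must be extremal in norm), reducing to a genuine convex combination; extremality of $\phi$ then yields $\phi_i=\phi$, which is precisely triviality of the combination.

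\emph{(1)$\iff$(2).} This is the Alfsen--Effros collinearity reformulation, and I would unwind the definitions given just before the proposition. The relation $\psi\prec\phi$ means $\|\psi\|\,\mathcal{S}(\psi/\|\psi\|)\le\|\phi\|\,\mathcal{S}(\phi/\|\phi\|)=\mathcal{S}(\phi)$, i.e.\ $\mathcal{S}(\phi)-\|\psi\|\mathcal{S}(\psi/\|\psi\|)$ is completely positive. The plan is to show this is equivalent to $\psi$ arising (up to positive scalar) in a convex decomposition of $\phi$, so that condition (2)---every $\psi\prec\phi$ being a scalar multiple of $\phi$---says exactly that $\phi$ admits no nontrivial convex splitting, which is extremality in the unit ball. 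I expect Lemma \ref{Lemma:below-Paulsen} to be the right tool here: if $\Psi:=\|\psi\|\mathcal{S}(\psi/\|\psi\|)$ lies below $\mathcal{S}(\phi)$ with $\Psi(1)$ invertible, the lemma produces the functional dominated by $\phi$ explicitly, and one reads off that it is codirectional with $\phi$.

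\emph{Expected main obstacle.} The subtle point is not the formal equivalences but verifying that, in the scalar setting, the relation $\prec$ exactly captures order-below in the cone of completely positive maps on $\mathcal{S}(X)$, handling the boundary case where $\Psi(1)$ fails to be invertible (so Lemma \ref{Lemma:below-Paulsen} does not apply directly). I would circumvent this by an $\varepsilon$-perturbation identical to the one used in the proof of Proposition \ref{Proposition:characterize-rectangular-extreme}: replace $\Psi$ by $(1-\varepsilon)\Psi+(\varepsilon/2)\mathcal{S}(\phi)$ to make the unit invertible, apply the lemma, and pass to the limit. The only genuine care needed is checking that the off-diagonal entry of $\mathcal{S}$ faithfully encodes the functional---that the map $\psi\mapsto\mathcal{S}(\psi)$ is order-preserving and injective on functionals---which follows from the explicit $2\times2$ form of the Paulsen construction and the fact that a positive $2\times2$ operator matrix controls its off-diagonal entry.
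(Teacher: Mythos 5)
Your overall architecture differs from the paper's (you prove (1)$\iff$(3) directly and (1)$\iff$(2) separately, whereas the paper runs the cycle (3)$\Rightarrow$(1)$\Rightarrow$(2)$\Rightarrow$(3) and puts all the work into (2)$\Rightarrow$(3)), but as written your proposal fails at its declared ``key conceptual point.'' In the scalar case a proper rectangular operator convex combination reads $\phi=\sum_i \overline{a}_i b_i\phi_i$ subject to $\sum_i|a_i|^2=\sum_i|b_i|^2=1$ with all $a_i,b_i\neq 0$; the effective coefficients $\overline{a}_ib_i$ are \emph{complex} and need not sum to $1$, so this is \emph{not} an ordinary convex combination. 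For instance $a_1=a_2=b_1=1/\sqrt{2}$, $b_2=-1/\sqrt{2}$ gives $\phi=\tfrac12\phi_1-\tfrac12\phi_2$, a perfectly proper rectangular combination (realized, e.g., by $\phi_1=\phi$, $\phi_2=-\phi$). The entire content of the proposition is that these phases cause no harm: when $\Vert\phi\Vert=1$, equality forced in the triangle and Cauchy--Schwarz inequalities gives $\Vert\phi_i\Vert=1$ and $|a_i|=|b_i|$, and writing $\overline{a}_ib_i=\lambda_ie^{i\theta_i}$ with $\lambda_i=|a_i|^2$ one obtains the genuine convex combination $\phi=\sum_i\lambda_i\left(e^{i\theta_i}\phi_i\right)$; extremality then yields $e^{i\theta_i}\phi_i=\phi$, i.e.\ $\phi_i=e^{-i\theta_i}\phi$, \emph{not} $\phi_i=\phi$, and triviality ($\alpha_i^{\ast}\alpha_i=\beta_i^{\ast}\beta_i=\lambda_i$ and $\alpha_i^{\ast}\phi_i\beta_i=\lambda_i\phi$) holds only because the phases cancel in $\alpha_i^{\ast}\phi_i\beta_i$, and because $|a_i|=|b_i|$ makes the \emph{same} $\lambda_i$ work on the $\alpha$-side and the $\beta$-side. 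Your sketch asserts the collapse to ordinary convexity outright and concludes ``$\phi_i=\phi$,'' thereby assuming away exactly the point at issue. This phase bookkeeping is precisely what the paper's proof of (2)$\Rightarrow$(3) spends its effort on: there, hypothesis (2) identifies the two pieces as $\rho_0\phi$ and $\rho_1\phi$ with $\rho_i\in\mathbb{C}$, and the final step $\rho_0+\rho_1=1$ combined with $|\rho_0|+|\rho_1|=1$ forces $\rho_i\in[0,1]$.

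Two further remarks. First, routing (1)$\iff$(2) through the Paulsen system and Lemma \ref{Lemma:below-Paulsen} is unnecessarily heavy and re-imports the $\varepsilon$-perturbation and limiting issues you flag: by definition, $\psi\prec\phi$ with $\psi\neq\phi$ says exactly $\Vert\psi\Vert+\Vert\phi-\psi\Vert=\Vert\phi\Vert$, so $\phi=\Vert\psi\Vert\,\bigl(\psi/\Vert\psi\Vert\bigr)+\Vert\phi-\psi\Vert\,\bigl((\phi-\psi)/\Vert\phi-\psi\Vert\bigr)$ is already the desired convex splitting, with no completely positive order argument needed; this is why the paper can call (1)$\Rightarrow$(2) straightforward. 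Second, the same scalar-versus-positive-scalar issue reappears in your (2)$\Rightarrow$(1): condition (2) only gives $\psi=c\phi$ with $c\in\mathbb{C}$, and to conclude extremality you must still argue $c\in[0,1]$ (via $|c|+|1-c|=1$). Once the phase arguments are supplied in both places, your plan does yield a correct proof whose structure genuinely differs from the paper's (it uses (1) rather than (2) to control the decomposition); as submitted, however, it is incomplete at its crux.
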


\begin{proof}
The implications (3)$\Rightarrow $(1)$\Rightarrow $(2) are straightforward.

(2)$\Rightarrow $(3) Suppose that, for every nonzero $\psi \in \mathrm{Ball}%
(X^{\prime })$, $\psi \prec \phi $ implies that $\psi $ is a scalar multiple
of $\phi $. The proof of Proposition \ref%
{Proposition:characterize-rectangular-extreme} shows that it suffices to
show that every proper rectangular convex combination of two elements is
trivial. Thus, consider a rectangular convex combination $\phi =s_{0}%
\overline{t}_{0}\phi _{0}+s_{1}\overline{t}_{1}\phi _{1}$ for some $\phi
_{0},\phi _{1}\in \mathrm{Ball}(X^{\prime })$ and non-zero $s_{0},
s_{1},t_{0},t_{1}\in \mathbb{C} $ such that $\left\vert s_{0}\right\vert
^{2}+\left\vert s_{1}\right\vert ^{2}=1$ and $\left\vert t_{0}\right\vert
^{2}+\left\vert t_{1}\right\vert ^{2}=1$. Observe that 
\begin{equation*}
1=\left\Vert s_{0}\overline{t}_{0}\phi _{0}+s_{1}\overline{t}_{1}\phi
_{1}\right\Vert \leq \left\Vert s_{0}\overline{t}_{0}\phi _{0}\right\Vert
+\left\Vert s_{1}\overline{t}_{1}\phi _{1}\right\Vert \leq \left\vert s_{0}%
\overline{t}_{0}\right\vert \left\Vert \phi _{0}\right\Vert +\left\vert s_{1}%
\overline{t}_{1}\right\vert \left\Vert \phi _{1}\right\Vert \leq \left\vert
s_{0}\overline{t}_{0}\right\vert +\left\vert s_{1}\overline{t}%
_{1}\right\vert \leq 1\text{.}
\end{equation*}%
Hence $\left\Vert s_{0}\overline{t}_{0}\phi _{0}\right\Vert +\left\Vert s_{1}%
\overline{t}_{1}\phi _{1}\right\Vert =\left\Vert \phi _{0}\right\Vert
=\left\Vert \phi _{1}\right\Vert =1$ and $|s_0| = |t_0|$ and $|s_1|= |t_1|$.
By hypothesis we have that $s_{0}%
\overline{t}_{0}\phi _{0}=\rho _{0}\phi $ and $s_{1}\overline{t}_{1}\phi
_{1}=\rho _{1}\phi $ for some $\rho _{0},\rho _{1}\in \mathbb{C}$.
In particular, $|\rho_i| = |s_i|^2 = |t_i|^2$ for $i=0,1$.
Since $\phi =s_{0} \overline{t}_{0}\phi _{0}+s_{1}\overline{t}_{1}\phi _{1}$,
it follows that $\rho_0 + \rho_1 = 1$. Combined with $|\rho_0| + |\rho_1| = 1$,
this implies that $\rho_0,\rho_1 \in [0,1]$, so that the rectangular convex combination
was trivial.
\end{proof}

\subsection{TRO-extreme points}

Suppose that $X$ is an operator space and $\varphi :X\rightarrow B(H,K)$ is
a completely contractive map. We say that $\varphi $ is a TRO-extreme point
if whenever $\varphi =\alpha _{1}^{\ast }\varphi _{1}\beta _{1}+\cdots
+\alpha _{\ell }^{\ast }\varphi _{\ell }\beta _{\ell }$ is a proper
rectangular matrix convex combination such that $\varphi _{i}:X\rightarrow
B(H,K)$ for $i=1,2,\ldots ,\ell $, then $\alpha _{i}^{\ast }\alpha
_{i}=t_{i}1$, $\beta _{i}^{\ast }\beta _{i}=t_{i}1$, and $\alpha _{i}^{\ast
}\varphi _{i}\beta _{i}=t_{i}\varphi $ for some $t_{i}\in \left[ 0,1\right] $%
. When $H,K$ are finite-dimensional, this is equivalent to requiring that
there exist unitaries $u_{i}\in M_{n}\left( \mathbb{C}\right) $ and $%
w_{i}\in M_{m}\left( \mathbb{C}\right) $ such that $\varphi _{i}=u_{i}^{\ast
}\varphi w_{i}$. This can be seen arguing as in the proof of Lemma \ref%
{Lemma:rectangular-extreme} below. The notion of TRO-extreme point can be
seen as the operator space analog of the notion of C*-extreme point
considered in \cite%
{hopenwasser_c*-extreme_1981,farenick_c*-extreme_1993,farenick_c*-extreme_1997}%
.

A similar proof as \cite[Theorem B]{farenick_extremal_2000} gives the
following lemma.

\begin{lemma}
Let $X$ be an operator space, and $A$ be the C*-algebra generated by $%
\mathcal{S}(X)$. If $\varphi :X\rightarrow B(H,K)$ is a TRO-extreme point
such that the range of $\varphi $ is an irreducible subspace of $B(H,K)$,
then there exists a pure unital completely positive map $\Phi :A\rightarrow
B(K\oplus H)$ that extends $S(\varphi)$.
\end{lemma}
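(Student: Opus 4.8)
The plan is to follow the strategy of \cite[Theorem B]{farenick_extremal_2000}, adapted to the rectangular setting by means of Lemma~\ref{Lemma:below-Paulsen}. Fix a concrete representation $X\subseteq B(L_{0},L_{1})$, let $T\subseteq B(L_{0},L_{1})$ be the TRO generated by $X$, and realize $A$ as the $C^{*}$-algebra generated by $\mathcal{S}(X)$ inside $B(L_{1}\oplus L_{0})$. By Arveson's extension theorem the set $\mathcal{E}$ of unital completely positive maps $A\to B(K\oplus H)$ whose restriction to $\mathcal{S}(X)$ equals $\mathcal{S}(\varphi)$ is nonempty; moreover $\mathcal{E}$ is convex and compact in the point--weak$^{*}$ topology, since the constraint of extending $\mathcal{S}(\varphi)$ is preserved under limits. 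I would choose $\Phi$ to be an extreme point of $\mathcal{E}$ (Krein--Milman) and prove that this $\Phi$ is pure.

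To prove purity, suppose $\Psi$ is a completely positive map with $\Phi-\Psi$ completely positive; the goal is $\Psi=t\Phi$ for some $t\in[0,1]$. Exactly as in the proof of Proposition~\ref{Proposition:characterize-rectangular-extreme} (compare \cite[Lemma 2.3]{davidson_choquet_2013}), for $\varepsilon>0$ I would set $\Psi_{\varepsilon}=(1-\varepsilon)\Psi+(\varepsilon/2)\Phi$ and $\Xi_{\varepsilon}=\Phi-\Psi_{\varepsilon}$, so that $\Psi_{\varepsilon},\Xi_{\varepsilon}$ are completely positive, sum to $\Phi$, and take invertible values at the unit. Restricting to $\mathcal{S}(X)$ and applying Lemma~\ref{Lemma:below-Paulsen} to each (using $\mathcal{S}(\varphi)-\Psi_{\varepsilon}|_{\mathcal{S}(X)}=\Xi_{\varepsilon}|_{\mathcal{S}(X)}$ and vice versa), I obtain positive invertible $a_{i}\in B(K)$, $b_{i}\in B(H)$ and completely contractive $\psi_{i}:X\to B(H,K)$ putting $\Psi_{\varepsilon}|_{\mathcal{S}(X)}$ and $\Xi_{\varepsilon}|_{\mathcal{S}(X)}$ in sandwiched form. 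Comparing the $X$-corners of the identity $\Psi_{\varepsilon}|_{\mathcal{S}(X)}+\Xi_{\varepsilon}|_{\mathcal{S}(X)}=\mathcal{S}(\varphi)$ then exhibits $\varphi=a_{1}\psi_{1}b_{1}+a_{2}\psi_{2}b_{2}$ as a proper rectangular operator convex combination. The decisive point is that every $\psi_{i}$ is valued in the \emph{same} $B(H,K)$, which is precisely the situation to which TRO-extremeness applies.

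By TRO-extremeness this combination is trivial, so each $\psi_{i}$ agrees with $\varphi$ after conjugating by the unitary polar parts $u_{i}\in B(K)$, $w_{i}\in B(H)$ of the weights. Here is where the irreducibility hypothesis enters and where I expect the main difficulty to lie. The relation obtained has the form $S_{i}\,\varphi(x)\,R_{i}=t_{i}\,\varphi(x)$ for all $x\in X$, with $S_{i}\in B(K)$ and $R_{i}\in B(H)$ built from the weights and the polar unitaries; this says that $S_{i},R_{i}$ intertwine the range of $\varphi$ with itself. Since the range of $\varphi$ is irreducible, a Schur-type argument forces the weights to be scalar and hence $\Psi_{\varepsilon}|_{\mathcal{S}(X)}=t_{\varepsilon}\,\mathcal{S}(\varphi)$. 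Absent irreducibility, TRO-extremeness only yields a unitary conjugate of $\varphi$ rather than a scalar multiple, so this is exactly the step that upgrades the C$^{*}$-extreme-type conclusion to purity, mirroring the final part of \cite[Theorem B]{farenick_extremal_2000}. Letting $\varepsilon\to 0$ gives $\Psi|_{\mathcal{S}(X)}=s\,\mathcal{S}(\varphi)$ for some $s\in[0,1]$.

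Finally I would lift this from $\mathcal{S}(X)$ to all of $A$ using extremality of $\Phi$. If $s\in(0,1)$, then $s^{-1}\Psi$ and $(1-s)^{-1}(\Phi-\Psi)$ are unital completely positive maps lying in $\mathcal{E}$, and $\Phi=s\,(s^{-1}\Psi)+(1-s)\,((1-s)^{-1}(\Phi-\Psi))$ realizes $\Phi$ as a proper convex combination in $\mathcal{E}$; extremality forces both factors to equal $\Phi$, whence $\Psi=s\Phi$. The boundary cases $s\in\{0,1\}$ are immediate, since a completely positive map vanishing on the generating operator system $\mathcal{S}(X)$ has vanishing value at the unit and therefore vanishes on $A$. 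Thus every completely positive map dominated by $\Phi$ is a scalar multiple of $\Phi$, i.e.\ $\Phi$ is pure, and $\Phi$ extends $\mathcal{S}(\varphi)$ by construction.
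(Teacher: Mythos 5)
Your overall architecture is correct and is the natural way to implement the proof the paper leaves to the reader (the paper only asserts that ``a similar proof as \cite[Theorem B]{farenick_extremal_2000}'' works): use the $\varepsilon$-perturbation together with Lemma \ref{Lemma:below-Paulsen} to show that every completely positive map dominated by $\mathcal{S}(\varphi)$ restricts on $\mathcal{S}(X)$ to a scalar multiple of $\mathcal{S}(\varphi)$, observing---correctly, and this is the key point---that the components produced by Lemma \ref{Lemma:below-Paulsen} automatically take values in the same $B(H,K)$, so that TRO-extremeness rather than full rectangular operator extremeness suffices; then take $\Phi$ to be an extreme point of the point-weak* compact convex set $\mathcal{E}$ of unital completely positive extensions of $\mathcal{S}(\varphi)$ to $A$, and use extremality to upgrade $\Psi|_{\mathcal{S}(X)}=s\,\mathcal{S}(\varphi)$ to $\Psi=s\Phi$. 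The opening step (nonemptiness, convexity and compactness of $\mathcal{E}$), the closing step, and your treatment of the degenerate cases $s\in\{0,1\}$ are all fine; the last step is the standard argument that extreme points of the set of extensions of a pure map are pure, cf.\ \cite{davidson_choquet_2013}.

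The middle step is the one that is not right as written, and it is precisely the step you flag as the crux. Note first that, as literally defined in the paper, TRO-extremeness \emph{concludes} triviality: $a_i^{*}a_i=t_i1$, $b_i^{*}b_i=t_i1$ and $a_i\psi_ib_i=t_i\varphi$. On that reading the weights are scalar by definition, $\Psi_\varepsilon|_{\mathcal{S}(X)}=t_\varepsilon\,\mathcal{S}(\varphi)$ follows by direct computation, and neither irreducibility nor any Schur-type argument occurs anywhere in the proof (indeed, on that reading a TRO-extreme point automatically has irreducible range, so the lemma's hypothesis would be redundant). Your text, however, works with the weaker Farenick-style conclusion that is surely the intended one, and is the one described in the paper's finite-dimensional remark: $\psi_i=u_i^{*}\varphi w_i$ for unitaries $u_i,w_i$. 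Under that reading your argument has a genuine gap: the relations $S_i\varphi(x)R_i=t_i\varphi(x)$ you claim to have ``obtained'' are not available---they are exactly the triviality you are trying to establish. What you actually have is only the unitary equivalences together with the single sum identity $\varphi=\sum_i(u_ia_i)^{*}\varphi\,(w_ib_i)$, and the operators $u_ia_i$, $w_ib_i$ are not intertwiners of the range, so there is nothing for Schur's lemma to act on. The correct completion---the actual content of the proof of \cite[Theorem B]{farenick_extremal_2000}, reproduced in the paper's proof of Lemma \ref{Lemma:rectangular-extreme}---is to form the unital completely positive map $\Lambda(z)=\sum_i(u_ia_i\oplus w_ib_i)^{*}z\,(u_ia_i\oplus w_ib_i)$ on $B(K\oplus H)$, note that the sum identity says $\Lambda$ fixes the range of $\mathcal{S}(\varphi)$ elementwise, use irreducibility of the range of $\varphi$ to see that the range of $\mathcal{S}(\varphi)$ is irreducible (as in the proof of Theorem \ref{T:rectangular_boundary_thm}), and then invoke Arveson's rigidity theorem \cite[Theorem 2.11]{arveson_subalgebras_1972} together with Choi's uniqueness of the Kraus decomposition \cite{choi_completely_1975} to conclude that $u_ia_i\oplus w_ib_i=\lambda_i1$, i.e.\ that the weights are scalar. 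With that substitution (which, as in the paper's sources, is really a finite-dimensional argument) your proof is complete.
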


Using this lemma, one can prove similarly as \cite[Theorem C]%
{farenick_extremal_2000} the following fact:

\begin{proposition}
Suppose that $T$ is a TRO, and $\varphi :T\rightarrow B(H,K)$ is a
TRO-extreme point. Then there exist pairwise orthogonal projections $\left(
p_{i}\right) _{i\in I}$ in $B(H)$ and $\left( q_{i}\right) _{i\in I}$ in $%
B(K)$ such that $p_{i}\varphi q_{i}$ is rectangular operator extreme for
every $i$, and $p_{i}\varphi q_{j}=0$ for every $i\neq j$.
\end{proposition}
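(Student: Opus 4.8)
The plan is to mirror the proof of Theorem C in \cite{farenick_extremal_2000}, reorganised around the von Neumann algebra of intertwiners of $\varphi$. Set $\mathcal{N}:=\{a\oplus b\in B(K)\oplus B(H):a\varphi(x)=\varphi(x)b\text{ for all }x\in T\}$. Because $\mathcal{S}(T)$ is a self-adjoint operator system containing $I_K\oplus 0$ and $0\oplus I_H$, the set $\mathcal{S}(\varphi)(\mathcal{S}(T))$ is self-adjoint and its commutant in $B(K\oplus H)$ consists exactly of the block-diagonal operators $a\oplus b$ with $a\varphi(x)=\varphi(x)b$; thus $\mathcal{N}$ is a von Neumann algebra whose projections are precisely the ``reducing pairs'' $(p,q)$ satisfying $q\varphi(x)=\varphi(x)p$ for all $x$. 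Two elementary observations follow: each reducing pair splits $\varphi(x)=q\varphi(x)p+(1-q)\varphi(x)(1-p)$, and if $q_i\oplus p_i$ and $q_j\oplus p_j$ are orthogonal projections of $\mathcal{N}$ then $q_i\varphi(x)p_j=\varphi(x)p_ip_j=0$ for $i\neq j$. So the vanishing of the off-diagonal blocks is automatic once the $(p_i,q_i)$ are chosen as pairwise orthogonal projections of $\mathcal{N}$; the substance of the proof is to produce such a family whose diagonal compressions are rectangular operator extreme and which exhausts $1_H$ and $1_K$.

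Next I would verify that the diagonal blocks are rectangular operator extreme. For any reducing pair $(p,q)$, the compression $\varphi_{p,q}:=q\varphi(\cdot)p:T\to B(pH,qK)$ is again TRO-extreme: given a proper rectangular matrix convex combination $\varphi_{p,q}=\sum_{i=1}^{\ell}\alpha_i^{\ast}\phi_i\beta_i$, I would pad each summand with the fixed complementary block $(1-q)\varphi(\cdot)(1-p)$ and append equal scalar weights $\ell^{-1/2}$ to the coefficients, producing a proper rectangular matrix convex combination of $\varphi$ into $B(H,K)$; TRO-extremality of $\varphi$ then forces this combination to be trivial, and reading off the $(q,p)$-corner shows the original combination was trivial. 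When in addition $q\oplus p$ is a minimal projection of $\mathcal{N}$, reduction theory identifies the intertwiner algebra of $\varphi_{p,q}$ with the corner $(q\oplus p)\mathcal{N}(q\oplus p)=\mathbb{C}(q\oplus p)$, so $\varphi_{p,q}$ has irreducible range. Arguing as in the proof of Theorem C of \cite{farenick_extremal_2000}, the pure unital completely positive extension of $\mathcal{S}(\varphi_{p,q})$ furnished by the preceding lemma shows that $\mathcal{S}(\varphi_{p,q})$ is an operator extreme point, whence $\varphi_{p,q}$ is rectangular operator extreme by Proposition \ref{Proposition:characterize-rectangular-extreme}.

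With these ingredients, I would assemble the decomposition by choosing, via Zorn's lemma, a maximal family $(p_i,q_i)_{i\in I}$ of pairwise orthogonal minimal projections of $\mathcal{N}$. By the previous two paragraphs the blocks $q_i\varphi(\cdot)p_i$ are rectangular operator extreme and the off-diagonal blocks vanish, which already yields the two asserted properties. It then remains to check that $\sum_i p_i=1_H$ and $\sum_i q_i=1_K$, equivalently that the chosen minimal projections sum to the identity of $\mathcal{N}$, so that $\varphi=\sum_i q_i\varphi(\cdot)p_i$ is genuinely recovered.

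The main obstacle is precisely this exhaustion, i.e.\ the atomicity of $\mathcal{N}$. In Farenick's setting the target is $M_n$, so $\mathcal{N}$ is finite-dimensional and automatically atomic and the decomposition is finite; here $\mathcal{N}$ is an arbitrary von Neumann algebra, and a continuous (type $\mathrm{II}$ or $\mathrm{III}$, or diffuse abelian) summand would have no minimal projections, leaving a piece of $\varphi$ that cannot be reached by any irreducible block. To rule this out I would translate TRO-extremality, through the Paulsen correspondence and Proposition \ref{Proposition:characterize-rectangular-extreme}, into the statement that $\mathcal{S}(\varphi)$ is extreme with respect to C*-convex combinations with block-diagonal coefficients, pass to a minimal Stinespring dilation $\mathcal{S}(\varphi)=V^{\ast}\pi(\cdot)V$ of a unital completely positive extension to the generated C*-algebra $A$, and follow the dilation analysis in the proof of Theorem C of \cite{farenick_extremal_2000} to show that a continuous part of $\mathcal{N}$ would produce a nontrivial proper rectangular matrix convex combination of $\varphi$, contradicting TRO-extremality. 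This forces $\mathcal{N}$ to be atomic with minimal projections summing to the identity; the family chosen above is then a full block-diagonal decomposition of $\varphi$ into rectangular operator extreme pieces, completing the proof.
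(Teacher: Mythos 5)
Your first three paragraphs are sound, and they flesh out the route the paper itself only gestures at (its ``proof'' is a pointer to Theorem C of \cite{farenick_extremal_2000}): the reduction to the intertwiner algebra $\mathcal{N}$, the padding argument showing that compressions by reducing pairs are again TRO-extreme, the identification of the intertwiners of a compression with the corner $\left( q\oplus p\right) \mathcal{N}\left( q\oplus p\right) $, and the automatic vanishing of the off-diagonal blocks are all correct. (Two small caveats: as you define it, $\mathcal{N}$ is only a weakly closed algebra; to identify it with the commutant of $\mathcal{S}(\varphi )(\mathcal{S}(T))$ one must also impose the adjoint relations $a^{\ast }\varphi (x)=\varphi (x)b^{\ast }$, though for selfadjoint pairs, in particular projections, the two conditions coincide. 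Also, deducing that a block is rectangular operator extreme from the preceding lemma requires passing from ``$\mathcal{S}(\varphi _{p,q})$ has a pure extension to $A$'' to ``$\mathcal{S}(\varphi _{p,q})$ is itself pure on $\mathcal{S}(T)$'', which is not automatic for restrictions; the paper glosses over this equally.)

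The genuine gap is the one you flag yourself and then defer: the exhaustion $\sum_{i}p_{i}=1_{H}$, $\sum_{i}q_{i}=1_{K}$. Your plan to obtain it by ``following the dilation analysis in the proof of Theorem C of \cite{farenick_extremal_2000}'' cannot be carried out, because no such analysis exists there: in that paper the target is $M_{n}\left( \mathbb{C}\right) $, so the intertwiner algebra is finite dimensional, atomicity is automatic, and the decomposition terminates by counting dimensions; nothing in that proof addresses a diffuse commutant. Moreover, the plan aims at the wrong difficulty. With the definition of TRO-extreme as actually stated in the paper (triviality means $\alpha _{i}^{\ast }\alpha _{i}=t_{i}1$, $\beta _{i}^{\ast }\beta _{i}=t_{i}1$), TRO-extremality forces $\mathcal{N}=\mathbb{C}1$ outright: if $a\oplus b\in \mathcal{N}$ is selfadjoint and non-scalar (for instance $q\oplus p$ for a nontrivial reducing pair), rescale it so that $\epsilon \leq a\oplus b\leq 1-\epsilon $ while it remains non-scalar; since $a^{1/2}\oplus b^{1/2}$ and $(1-a)^{1/2}\oplus (1-b)^{1/2}$ still intertwine $\varphi $, one gets
\begin{equation*}
a^{1/2}\,\varphi (x)\,b^{1/2}+(1-a)^{1/2}\,\varphi (x)\,(1-b)^{1/2}=\varphi (x)b+\varphi (x)(1-b)=\varphi (x)\text{,}
\end{equation*}
a proper rectangular convex combination (all four coefficients are positive and invertible, the coefficient sums are $a+(1-a)=1$ and $b+(1-b)=1$, and both $\varphi _{i}=\varphi $) which is not trivial because $a\oplus b$ is not scalar. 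So under that definition the range of a TRO-extreme point is already irreducible, $I$ is a singleton, and the entire Zorn/minimal-projection apparatus is vacuous; all of the content sits in the irreducible-range case. If instead ``trivial'' is read in the C*-extreme fashion ($\varphi _{i}=u_{i}^{\ast }\varphi w_{i}$ for unitaries), which is what the paper's finite-dimensional remark and the cited C*-extremity literature suggest is intended, and which is what makes the proposition a genuine analogue of Farenick's Theorem C, then the combination displayed above has $\varphi _{i}=\varphi $ and rules out nothing, and your key claim---that a continuous summand of $\mathcal{N}$ contradicts TRO-extremality---is precisely the unproved point, with no support available from \cite{farenick_extremal_2000}. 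Either way, your final paragraph is a plan rather than a proof, and it is the step on which the whole infinite-dimensional statement turns.
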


\subsection{The Shilov boundary of an operator space\label{Subs:shilov}}

Suppose that $X$ is an operator space. A \emph{triple cover }of $X$ is a
pair $\left( \iota ,T\right) $ where $T$ is a TRO and $\iota :X\rightarrow T$
is a completely isometric linear map whose range is a subspace of $T$ that
generates $T$ as a TRO. Among the triple covers there exists a canonical
one: the \emph{triple envelope}. This is the (unique) triple cover $(\iota
_{e},\mathcal{T}_{e}(X))$ with the property that for any other triple cover $%
\left( \iota ,T\right) $ of $X$, there exists a triple morphism $\theta
:T\rightarrow \mathcal{T}_{e}(X)$ such that $\theta \circ \iota =\iota _{e}$%
.\ The existence of the triple envelope was established by Hamana in \cite%
{hamana_injective_1992} using the construction of the injective envelope of
an operator space; see also \cite[Section 4.4]{blecher_operator_2004}.

The triple envelope of an operator space is referred to as the \emph{%
(noncommutative) Shilov boundary }in \cite{blecher_shilov_2001}. It is
remarked in \cite{blecher_shilov_2001} at the beginning of Section 4,
referring to the Shilov boundary of an operator space, that
\textquotedblleft the spaces above are not at the present time defined
canonically\textquotedblright , and \textquotedblleft this lack of
canonicity is always a potential source of blunders in this area, if one is
not careful about various identifications." We remark here that the theory
of boundary representations provides a canonical construction of the Shilov
boundary of an operator space $X$. Indeed one can consider $\iota
_{e}:X\rightarrow B(H,K)$ to be the direct sum of all the boundary
representations for $X$, and then let $\left( \iota _{e},\mathcal{T}%
_{e}(X)\right) $ be the subTRO of $B(H,K)$ generated by the image of $\iota
_{e}$. Proposition \ref{Proposition:maximal-uep} implies that $\iota_e$ is
maximal, so we may argue as in the proof of \cite[Theorem 4.1]%
{dritschel_boundary_2005} that $\mathcal{T}_{e}(X)$ is indeed the triple
envelope of $X $. It also follows that if $X$ has a completely isometric
boundary representation $\theta :X\rightarrow B(H)$, then the triple
envelope of $X$ is the TRO generated by the range of $\theta $ inside $B(H)$.

\subsection{The Shilov boundary of a Banach space\label{Subs:shilov-banach}}

A TRO $T$ is \emph{commutative }if $xy^{\ast }z=zy^{\ast }x$ for every $%
x,y,z\in T$. Several equivalent characterizations of commutative TROs are
provided in \cite[Proposition 8.6.5]{blecher_operator_2004}. Suppose that $E$
is a locally trivial line bundle over a locally compact Hausdorff space $U$.
Then the space $\Gamma _{0}(E)$ of continuous sections of $E$ that vanish at
infinity is a commutative TRO such that $\Gamma _{0}(E)^{\ast }\Gamma
_{0}(E)=C_{0}(E)$. Conversely, it is observed in \cite[Section 4]%
{blecher_shilov_2001}---see also \cite{dupre_banach_1983}---that any
commutative TRO is of this form. One can also describe the commutative TROs
as the $C_{\sigma }$-spaces from the Banach space literature.

Suppose that $E$ is a locally trivial line bundle over a locally compact
Hausdorff space $U$ with point at infinity $\infty $ and $X\subset \Gamma
_{0}(E)$ be a closed subspace. Assume that the set of elements $\left\{
\left\langle x,y\right\rangle :x,y\in X\right\} $ of $C_{0}\left( U\right) $
separates the points of $U$ and does not identitically vanish at any point
of $U$. This is equivalent to the assertion that $X$ generates $\Gamma
_{0}(E)$ as a TRO, as proved in \cite[Theorem 4.20]{blecher_shilov_2001}. An
irreducible representation of $\Gamma _{0}(E)$ is of the form $x\mapsto
x\left( \omega _{0}\right) $ for some $\omega _{0}\in U$. A linear map from $%
\Gamma _{0}(E)$ to $\mathbb{C}$ of norm $1$ has the form $x\mapsto \int
x\left( \omega \right) d\mu \left( \omega \right) $ for some Borel
probability measure $\mu $ on $U$. We say that $\mu $ is a \emph{%
representing measure} for $\omega _{0}\in U$ if $\int x\left( \omega \right)
d\mu \left( \omega \right) =x\left( \omega _{0}\right) $ for every $x\in X$.
A point $\omega _{0}\in U$ is a \emph{Choquet boundary point }if the point
mass at $\omega _{0}$ is the unique representing measure for $\omega _{0}$.
It follows from the observations above that $\omega _{0}$ is a Choquet
boundary point for $X$ if and only if the map $x\mapsto x\left( \omega
_{0}\right) $ is a boundary representation for $X$. The Choquet boundary $%
\mathrm{Ch}(X)$ of $X$ is the set of Choquet boundary points of $X$.

Suppose that $\partial _{S}X\cup \left\{ \infty \right\} $ is the closure of 
$\mathrm{Ch}(X)\cup \left\{ \infty \right\} $ inside $U\cup \left\{ \infty
\right\} $. Then it follows from Theorem \ref{Theorem:shilov-boundary} that $%
E|_{\partial _{S}X}$ is the Shilov boundary of $X$ in the sense of \cite[%
Theorem 4.25]{blecher_shilov_2001}. This means that the linear map $%
X\rightarrow \Gamma _{0}(E|_{\partial _{S}X})$, $x\mapsto x|_{\partial
_{S}X} $ is isometric, and for any locally trivial line bundle over a
locally compact Hausdorff space $V$ and linear isometry $J:X\rightarrow
\Gamma _{0}\left( V\right) $ with the property that the set $\left\{
J(x)^{\ast }J(y):x,y\in X\right\} $ separates the points of $V$ and does not
identically vanish at any point of $V$, there exists a proper continuous
injection $\varphi :\partial _{S}X\rightarrow V$ with the property that $%
J(x)\circ \varphi =x|_{\partial _{S}X}$ for every $x\in X$. This gives a
canonical construction of the Shilov boundary of a Banach space, analogous
to the canonical construction of a Shilov boundary of a unital function
space; see \cite[Section 4.1]{blecher_operator_2004}.

\subsection{The rectangular boundary theorem}

Arveson's boundary theorem \cite[Theorem 2.1.1]{Arveson72} asserts that if $%
S\subset B(H)$ is an operator system which acts irreducibly on $H$ such that
the C*-algebra $C^{\ast }(S)$ contains the algebra of compact operators $%
\mathcal{K}(H)$, then the identity representation of $C^{\ast }(S)$ is a
boundary representation for $S$ if and only if the quotient map $%
B(H)\rightarrow B(H)/\mathcal{K}(H)$ is not completely isometric on $S$.

The following result is a rectangular generalization of Arveson's boundary
theorem.

\begin{theorem}
\label{T:rectangular_boundary_thm} Let $X\subset B(H,K)$ be an operator
space such that the TRO $T$ generated by $X$ acts irreducibly and such that $%
T\cap \mathcal{K}(H,K)\neq \{0\}$. Then the identity representation of $T$
is a boundary representation for $X$ if and only if the quotient map $%
B(H,K)\rightarrow B(H,K)/\mathcal{K}(H,K)$ is not completely isometric on $X$%
.
\end{theorem}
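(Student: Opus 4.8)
The plan is to reduce the statement to Arveson's classical boundary theorem applied to the Paulsen system $\mathcal{S}(X)$, exploiting the machinery already established in Propositions \ref{Proposition:Paulsen-boundary} and \ref{Proposition:maximal-uep}. First I would set up the linking picture: let $A = C^*(\mathcal{S}(X)) \subset B(K \oplus H)$ be the C*-algebra generated by the Paulsen system, which by the computation in the proof of Proposition \ref{Proposition:Paulsen-boundary} consists of operators with corners in $TT^*$, $T$, $T^*$, and $T^*T$. The identity representation of $T$ on $B(H,K)$ is a boundary representation for $X$ precisely when the associated inclusion $\mathcal{S}(X) \hookrightarrow A$ is, via $\mathcal{S}$, a boundary representation of the Paulsen system; this correspondence is exactly the content of Proposition \ref{Proposition:Paulsen-boundary} together with the identification of boundary representations with their unique extensions. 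So the task becomes translating the two hypotheses---irreducibility of $T$ and $T \cap \mathcal{K}(H,K) \neq \{0\}$---into the hypotheses of Arveson's theorem for the operator system $\mathcal{S}(X)$ acting on $K \oplus H$.

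The key steps are then as follows. First I would verify that the irreducible action of $T$ on $(H,K)$ forces $\mathcal{S}(X)$ to act irreducibly on $K \oplus H$: a reducing projection for $\mathcal{S}(X)$ must commute with the two diagonal corner projections $q = I_K \oplus 0$ and $p = 0 \oplus I_H$ (which lie in $A$), hence is itself block-diagonal $q' \oplus p'$, and the intertwining condition with the off-diagonal corner $T$ yields $q'\theta(x) = \theta(x)p'$ for all $x \in T$; irreducibility of $T$ then gives $q' \oplus p' \in \{0, I\}$. Second, I would show that the hypothesis $T \cap \mathcal{K}(H,K) \neq \{0\}$ implies $\mathcal{K}(K \oplus H) \subset C^*(\mathcal{S}(X)) = A$. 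Indeed, if $0 \neq t \in T \cap \mathcal{K}(H,K)$, then $tt^* \in TT^*$ and $t^*t \in T^*T$ are nonzero compact operators lying in $A$, and since $T$ acts irreducibly the C*-algebras $TT^*$ and $T^*T$ act irreducibly on $K$ and $H$ respectively; an irreducible C*-algebra containing a nonzero compact operator contains all compacts, so $\mathcal{K}(K), \mathcal{K}(H) \subset A$, and combined with the compact off-diagonal entries $tt^* t \in T \cap \mathcal{K}$ this yields $\mathcal{K}(K \oplus H) \subset A$, so $A$ satisfies Arveson's compactness hypothesis.

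The third step is to match the completely-isometric conditions. Arveson's theorem says the identity representation of $A$ is a boundary representation for $\mathcal{S}(X)$ if and only if the quotient map $B(K \oplus H) \to B(K \oplus H)/\mathcal{K}(K \oplus H)$ is not completely isometric on $\mathcal{S}(X)$. I would argue that this quotient map fails to be completely isometric on $\mathcal{S}(X)$ if and only if the quotient map $B(H,K) \to B(H,K)/\mathcal{K}(H,K)$ fails to be completely isometric on $X$. This follows because the complete norm of an element of $\mathcal{S}(X)$ in the Calkin-type quotient is governed by the complete norms of its off-diagonal corners (via the standard relation $\|\mathcal{S}(\phi)\|_{cb} = \max(1, \|\phi\|_{cb})$ applied at the level of the quotient operator space), so a drop in the complete norm of $X$ modulo compacts corresponds exactly to a drop in the complete norm of $\mathcal{S}(X)$ modulo compacts. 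Combining the three steps with Proposition \ref{Proposition:Paulsen-boundary} completes the proof. \textbf{The main obstacle} I anticipate is the third step: making the passage between the quotient of $\mathcal{S}(X)$ modulo $\mathcal{K}(K \oplus H)$ and the quotient of $X$ modulo $\mathcal{K}(H,K)$ fully rigorous at the level of \emph{complete} isometry, since one must check that the Paulsen-system construction commutes appropriately with passing to the Calkin quotient---in particular that the quotient of the Paulsen system $\mathcal{S}(X)$ is naturally the Paulsen system of the quotient operator space $X + \mathcal{K}(H,K)$ inside $B(H,K)/\mathcal{K}(H,K)$, so that the equivalence $\|\mathcal{S}(\phi)\|_{cb} = \max(1,\|\phi\|_{cb})$ can be invoked on both sides.
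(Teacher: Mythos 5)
Your plan for the ``if'' direction (quotient not completely isometric $\Rightarrow$ boundary representation) matches the paper's proof: verify that $\mathcal{S}(X)$ acts irreducibly and that $C^{\ast}(\mathcal{S}(X))$ contains all compacts, transfer the norm drop on $X$ modulo compacts to $\mathcal{S}(X)$ by viewing $M_n(X)$ as a corner of $M_n(\mathcal{S}(X))$, apply Arveson's boundary theorem, and come back down via Proposition \ref{Proposition:Paulsen-boundary}. The genuine gap is in the other direction, and it sits in your very first reduction. You assert that the identity representation of $T$ is a boundary representation for $X$ ``precisely when'' the inclusion $\mathcal{S}(X)\subset B(K\oplus H)$ is a boundary representation of $\mathcal{S}(X)$, and that this is ``exactly the content'' of Proposition \ref{Proposition:Paulsen-boundary}. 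It is not: that proposition goes only downward (every boundary representation $\omega$ of $\mathcal{S}(X)$ is of the form $\mathcal{S}(\psi)$ for a boundary representation $\psi$ of $X$). Your chain for the ``only if'' direction --- identity is a boundary representation for $X$ $\Rightarrow$ inclusion is a boundary representation for $\mathcal{S}(X)$ $\Rightarrow$ quotient not completely isometric on $\mathcal{S}(X)$ $\Rightarrow$ quotient not completely isometric on $X$ --- needs the \emph{upward} implication, which nothing in the paper or in your outline establishes. That upward statement is in fact true under the theorem's hypotheses, but it requires real work: one must show that every unital completely positive extension of the inclusion of $\mathcal{S}(X)$ to $C^{\ast}(\mathcal{S}(X))$ preserves the corner decomposition (a multiplicative domain argument), that its $1$-$2$ corner is forced to equal $\mathrm{id}_T$ by the unique extension property of $\mathrm{id}_X$, and then, via a Stinespring/maximality argument of the kind appearing in Lemma \ref{Lemma:trivial} and Proposition \ref{Proposition:maximal-uep}, that the diagonal corners act as the identity on $TT^{\ast}$ and $T^{\ast}T$ as well.

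The paper avoids this issue entirely: its ``only if'' direction is a short elementary argument that never mentions the Paulsen system. If the quotient map $\pi$ is completely isometric on $X$, then $(\pi|_X)^{-1}$ extends, by injectivity of $B(H,K)$ (Haagerup--Paulsen--Wittstock), to a complete contraction $\psi: B(H,K)/\mathcal{K}(H,K)\rightarrow B(H,K)$, and $\psi\circ\pi$ restricted to $T$ is a rectangular operator state extending the inclusion of $X$ which annihilates $T\cap\mathcal{K}(H,K)\neq\{0\}$. Since a triple morphism agreeing with the identity on the generating subspace $X$ must be the identity on $T$, this extension is not a triple morphism, so the identity representation fails the unique extension property and is not a boundary representation. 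Adopting this argument would also let you discard the delicate half of your third step (complete isometry on $X$ implies complete isometry on $\mathcal{S}(X)$ modulo compacts), which you correctly flagged as the main obstacle: the paper only ever needs the easy half, namely that a norm drop on $X$ modulo compacts forces one on $\mathcal{S}(X)$.
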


\begin{proof}
Suppose first that the quotient map $\pi $ is completely isometric on $X$.
Then $\pi $, regarded as a map from $X\rightarrow \pi (X)$, admits a
completely isometric inverse, which extends to a complete contraction $\psi
:B(H,K)/\mathcal{K}(H,K)\rightarrow B(H,K)$. Clearly, $\psi \circ \pi $ is a
completely contractive map which extends the inclusion of $X$ into $B(H,K)$,
but it does not extend the inclusion of $T$ into $B(H,K$), since it
annihilates the compact operators.

Conversely, suppose that the quotient map is not completely isometric on $X$%
. Let $\mathcal{S}(X) \subset B(K \oplus H)$ denote the Paulsen system
associated with $X$. We will verify that $\mathcal{S}(X)$ satisfies the
assumptions of Arveson's boundary theorem.

To see that $\mathcal{S}(X)$ acts irreducibly, suppose that $p$ is an
orthogonal projection on $K\oplus H$ which commutes with $\mathcal{S}(X)$.
In particular, $p$ commutes with $I_{K}\oplus 0$ and $0\oplus I_{H}$, from
which we deduce that $p=p_{1}\oplus p_{2}$, where $p_{1}\in B(K)$ and $%
p_{2}\in B(H)$ are orthogonal projections. For $x\in X$, we therefore have 
\begin{equation*}
\begin{bmatrix}
0 & p_{1}x \\ 
0 & 0%
\end{bmatrix}%
=%
\begin{bmatrix}
p_{1} & 0 \\ 
0 & p_{2}%
\end{bmatrix}%
\begin{bmatrix}
0 & x \\ 
0 & 0%
\end{bmatrix}%
=%
\begin{bmatrix}
0 & x \\ 
0 & 0%
\end{bmatrix}%
\begin{bmatrix}
p_{1} & 0 \\ 
0 & p_{2}%
\end{bmatrix}%
=%
\begin{bmatrix}
0 & xp_{2} \\ 
0 & 0%
\end{bmatrix}%
,
\end{equation*}%
hence $p_{1}x=xp_{2}$ for all $x\in X$. Since $X$ acts irreducibly, it
follows that either $p_{1}\oplus p_{2}=0$ or $p_{1}\oplus p_{2}=I_{K\oplus
H} $, so that $\mathcal{S}(X)$ acts irreducibly. The assumption that $T$
contains a non-zero compact operator implies that $\mathcal{S}(X)$ contains
a non-zero compact operator, hence by irreducibility of $\mathcal{S}(X)$, we
see that $\mathcal{K}(K\oplus H)\subset C^{\ast }(\mathcal{S}(X))$.

Since the quotient map $B(H,K)\rightarrow B(H,K)/\mathcal{K}(H,K)$ is not
completely isometric on $X$, there exists $x\in M_{n}(X)$ and $k\in M_{n}(%
\mathcal{K}(H,K))$ such that $||x-k||<||x||$. Regarding $x$ as an element of 
$M_{n}(\mathcal{S}(X))$ in the canonical way and correspondingly $k$ as an
element of $M_{n}(\mathcal{K}(K\oplus H))$, we see that the quotient map $%
B(K\oplus H)\rightarrow B(K\oplus H)/\mathcal{K}(K\oplus H)$ is not
completely isometric on $\mathcal S(X)$. Thus, Arveson's boundary theorem implies that the
identity representation is a boundary representation of $\mathcal{S}(X)$.
According to Proposition \ref{Proposition:Paulsen-boundary}, there exists a
boundary representation $\psi :X\rightarrow B(L_{1},L_{2})$ of $X$ such that 
$S(\psi )$ is the inclusion of $\mathcal{S}(X)$ into $B(K\oplus H)$. It
easily follows now that $L_{1}=H,L_{2}=K$ and that $\psi $ is the inclusion
of $X$ into $B(H,K)$, which finishes the proof.
\end{proof}

\subsection{Rectangular multipliers}

A \emph{reproducing kernel Hilbert space} $H$ on a set $X$ is a Hilbert
space of functions on $X$ such that for every $x\in X$, the functional 
\begin{equation*}
H\rightarrow \mathbb{C},\quad f\mapsto f(x),
\end{equation*}%
is bounded. The unique function $k:X\times X\rightarrow \mathbb{C}$ which
satisfies $k(\cdot ,x)\in H$ for all $x\in X$ and 
\begin{equation*}
\langle f,k(\cdot ,x)\rangle =f(x)
\end{equation*}
for all $x\in X$ and $f\in H$ is called the \emph{reproducing kernel} of $H$%
. We will always assume that $H$ has no common zeros, meaning that there
does not exist $x\in X$ such that $f(x)=0$ for all $x\in X$. Equivalently, $%
k(x,x)\neq 0$ for all $x\in X$. We refer the reader to the books \cite{PR16}
and \cite{AM02} for background material on reproducing kernel Hilbert spaces.

If $H$ and $K$ are reproducing kernel Hilbert spaces on the same set $X$, we
define the multiplier space 
\begin{equation*}
\Mult(H,K)=\{\varphi :X\rightarrow \mathbb{C}:\varphi \cdot f\in K\text{ for
all }f\in H\},
\end{equation*}%
where $\left( \varphi \cdot f\right) \left( x\right) =\varphi \left(
x\right) f\left( x\right) $ for $x\in X$; see \cite[Section 5.7]{PR16}. By 
\cite[Theorem 5.21]{PR16}, every $\varphi \in \Mult(H,K)$ induces a bounded
multiplication operator $M_{\varphi }:H\rightarrow K$. Moreover, since $K$
has no common zeros, every multiplier $\varphi $ is uniquely determined by
its associated multiplication operator $M_{\varphi }$. We may thus regard $%
\Mult(H,K)$ as a subspace of $B(H,K)$.

The best studied case occurs when $H=K$, in which case $\Mult(H)=\Mult(H,H)$
is an algebra, called the multiplier algebra of $H$; \cite[Section 2.3]{AM02}%
. Nevertheless, the rectangular case of two different reproducing kernel
Hilbert spaces has been studied as well, see for example \cite{Taylor66} and 
\cite{Stegenga80}, where multipliers between weighted Dirichlet spaces are
investigated.

We say that a reproducing kernel Hilbert space $K$ on $X$ with reproducing
kernel $k$ is \emph{irreducible} if $X$ cannot be partitioned into two
non-empty sets $X_{1}$ and $X_{2}$ such that $k(x,y)=0$ for all $x\in X_{1}$
and $y\in X_{2}$. This definition is more general than the definition of
irreducibility in \cite[Definition 7.1]{AM02}, but it suffices for our
purposes.

For the next Lemma, we observe that if $H$ contains the constant function $1$%
, then $\Mult(H,K)$ is contained in $K$.

\begin{lemma}
\label{L:mult_irred} Let $H$ and $K$ be reproducing kernel Hilbert spaces on
the same set. Suppose that

\begin{itemize}
\item $H$ contains the constant function $1$,

\item $\Mult(H,K)$ is dense in $K$, and

\item $K$ is irreducible.
\end{itemize}

Then $\Mult(H,K) \subset B(H,K)$ acts irreducibly.
\end{lemma}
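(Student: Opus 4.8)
The plan is to show that $\operatorname{Mult}(H,K) \subset B(H,K)$ acts irreducibly in the sense defined for concrete operator spaces, i.e.\ that the inclusion representation of the generated TRO is irreducible. Unwinding the definition of irreducibility for TROs given in the preliminaries, I must show: whenever $p \in B(H)$ and $q \in B(K)$ are orthogonal projections satisfying $q M_\varphi = M_\varphi p$ for every $\varphi \in \operatorname{Mult}(H,K)$, then either $p = 1, q = 1$ or $p = 0, q = 0$. So I fix such a pair $(p,q)$ and aim to conclude it is trivial.

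The key geometric idea is to exploit the kernel functions. First I would use the hypothesis that $H$ contains the constant function $1$. This has two consequences recorded just before the Lemma: $\operatorname{Mult}(H,K) \subset K$ (via $\varphi \mapsto M_\varphi 1 = \varphi$), and more usefully, $M_\varphi 1 = \varphi$ as an element of $K$. The plan is to test the intertwining relation $q M_\varphi = M_\varphi p$ against reproducing kernel vectors. Recall that for $M_\varphi : H \to K$ the adjoint acts on kernels by $M_\varphi^* k_K(\cdot, x) = \overline{\varphi(x)}\, k_H(\cdot, x)$, where $k_H, k_K$ are the kernels of $H$ and $K$. Taking adjoints in the intertwining relation gives $M_\varphi^* q = p M_\varphi^*$ for all $\varphi$, and applying both sides to $k_K(\cdot, x)$ should let me compare how $p$ and $q$ act on kernel functions. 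Since $\operatorname{Mult}(H,K)$ is dense in $K$, I expect to show that $q$ must be a multiplication-type projection determined by a partition of $X$, and likewise $p$; the constant function $1 \in H$ then pins down the relationship between the two partitions.

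The step I expect to be the main obstacle is extracting, from the intertwining relation alone, that $q$ (and $p$) come from a partition of the underlying set $X$ — essentially showing that $q$ is diagonal with respect to the kernel functions. Concretely, I would try to prove that for each $x \in X$ the kernel vector $k_K(\cdot, x)$ is an eigenvector of $q$ with eigenvalue $0$ or $1$, thereby inducing a partition $X = X_1 \sqcup X_2$ according to whether $q k_K(\cdot, x) = k_K(\cdot, x)$ or $q k_K(\cdot, x) = 0$. Once this partition is in hand, the density of $\operatorname{Mult}(H,K)$ in $K$ together with the relation $q M_\varphi = M_\varphi p$ should force the multiplier kernel to respect the partition, i.e.\ $k_K(x,y) = 0$ whenever $x, y$ lie in different blocks. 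At that point the irreducibility hypothesis on $K$ — that $X$ admits no such nontrivial partition — forces one block to be empty, giving $q \in \{0, 1\}$, and the same analysis on the $H$-side (or the relation evaluated at the constant function $1$) forces $p$ to match, completing the proof. The subtle point throughout will be justifying the passage to adjoints and the manipulation of possibly unbounded families of kernel vectors, which the density hypothesis is designed to control.
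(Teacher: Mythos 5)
Your proposal is correct and takes essentially the same route as the paper's own proof. The paper evaluates $qM_\varphi = M_\varphi p$ at the constant function $1$, uses density of $\Mult(H,K)$ in $K$ to identify $q = M_\psi$ with $\psi = p1$, and then derives the kernel identity $\psi(x)k(x,y) = \overline{\psi(y)}k(x,y)$ to show $\psi$ is constant ($0$ or $1$) via the same partition/irreducibility argument; this is precisely the adjoint formulation of your claim that each $k_K(\cdot,x)$ is an eigenvector of $q$ with eigenvalue $0$ or $1$, and your treatment of $p$ (using that for each $x$ some multiplier satisfies $\varphi(x)\neq 0$, by density and the no-common-zeros assumption) matches the paper's concluding step that $\bigcap_{\varphi}\ker(M_\varphi)=\{0\}$.
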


\begin{proof}
Suppose that $p\in B(H)$ and $q\in B(K)$ are orthogonal projections which
satisfy $qM_{\varphi }=M_{\varphi }p$ for all $\varphi \in \Mult(H,K)$.
Define $\psi =p1\in H$. Then for all $\varphi \in \Mult(H,K)$, the identity 
\begin{equation*}
q\varphi =qM_{\varphi }1=M_{\varphi }p1=\psi \varphi
\end{equation*}%
holds. Since $\Mult(H,K)$ is dense in $K$, we deduce that $\psi \in \Mult(K)$
and that $q=M_{\psi }$. We claim that $\psi $ is necessarily constant. To
this end, let $k$ denote the reproducing kernel of $K$. Note that $M_{\psi }$
is in particular selfadjoint, so that 
\begin{equation*}
\psi (x)k(x,y)=\langle M_{\psi }k(\cdot ,y),k(\cdot ,x)\rangle =\langle
k(\cdot ,y),M_{\psi }k(\cdot ,x)\rangle =\overline{\psi (y)}k(x,y)
\end{equation*}%
for all $x,y\in X$. Hence $\psi (x)$ is real for all $x\in X$ and $\psi
(x)=\psi (y)$ if $k(x,y)\neq 0$. Fix $x_{0}\in X$, and suppose for a
contradiction that 
\begin{equation*}
X_{1}=\{x\in X:\psi (x)=\psi (x_{0})\}
\end{equation*}%
is a proper subset of $X$ and let $X_{2}=X\setminus X_{1}$. If $x\in X_{1}$
and $y\in X_{2}$, then $\psi (y)\neq \psi (x_{0})=\psi (x)$, hence $k(x,y)=0$%
. This contradicts irreducibility of $K$, so that $\psi $ is constant.
Moreover, since $M_{\psi }$ is a projection, we necessarily have $\psi =1$
or $\psi =0$. If $\psi =1$, then $q=M_{\psi }=I_{K}$ and $M_{\varphi
}(I_{H}-p)=0$ for all $\varphi \in \Mult(H,K)$. Similarly, if $\psi =0$,
then $q=0$ and $M_{\varphi }p=0$ for all $\varphi \in \Mult(H,K)$. We may
thus finish the proof by showing that 
\begin{equation*}
\bigcap_{\varphi \in \Mult(H,K)}\ker (M_{\varphi })=\{0\}.
\end{equation*}%
To this end, note that if $x\in X$, then $\{f\in K:f(x)=0\}$ is a proper
closed subspace of $K$, as $K$ has no common zeros. Since $\Mult(H,K)$ is
dense in $K$, it cannot be contained in such a subspace, thus for every $%
x\in X$, there exists $\varphi \in \Mult(H,K)$ such that $\varphi (x)\neq 0$%
. If $f\in H$ satisfies $M_{\varphi }f=0$ for all $\varphi \in \Mult(H,K)$,
it therefore follows that $f(x)=0$ for all $x\in X$, that is, $f=0$, as
desired.
\end{proof}

We can now use the rectangular boundary theorem to show that for many
multiplier spaces, the identity representation is always a boundary
representation.

\begin{proposition}
\label{prop:rectangular_mult_boundary} Let $H$ and $K$ be reproducing kernel
Hilbert spaces on the same set and let $M = \Mult(H,K)$. Suppose that

\begin{itemize}
\item $H$ contains the constant function $1$,

\item $M$ is dense in $K$,

\item $K$ is irreducible, and

\item $M$ contains a non-zero compact operator.
\end{itemize}

Then the identity representation is a boundary representation of $M $. In
particular, the triple envelope of $M$ is the TRO generated by $M$.
\end{proposition}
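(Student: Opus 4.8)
The plan is to verify directly that the multiplier space $M = \Mult(H,K) \subset B(H,K)$ satisfies the hypotheses of the rectangular boundary theorem (Theorem \ref{T:rectangular_boundary_thm}). That theorem requires three things: that the TRO $T$ generated by $M$ act irreducibly on the relevant Hilbert spaces, that $T \cap \mathcal{K}(H,K) \neq \{0\}$, and that the quotient map $B(H,K) \to B(H,K)/\mathcal{K}(H,K)$ fail to be completely isometric on $M$. Granting all three, the theorem immediately yields that the identity representation of $T$ is a boundary representation of $M$; the final sentence about the triple envelope then follows from the discussion in Section \ref{Subs:shilov} (specifically, the remark there that a completely isometric boundary representation realizes the triple envelope as the TRO generated by the image).

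First I would dispose of irreducibility. The hypotheses here ($H$ contains the constant $1$, $M$ dense in $K$, $K$ irreducible) are precisely those of Lemma \ref{L:mult_irred}, which concludes that $M \subset B(H,K)$ acts irreducibly. Irreducibility of $M$ as a subspace of operators should immediately give irreducibility of the generated TRO $T$, since the commutation condition defining irreducibility of a concrete TRO is stated in terms of projections $p,q$ with $q\theta(x) = \theta(x)p$ for all $x$ in the TRO, and such $p,q$ would in particular satisfy the same relation on the generating subspace $M$; one should check that the relation propagates from $M$ to all of $T$, which is routine since $T$ is the closed triple-product span of $M$ and the intertwining relation is preserved under triple products. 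The second hypothesis, $M \cap \mathcal{K}(H,K) \neq \{0\}$, is given directly as an assumption ($M$ contains a non-zero compact operator), and since $M \subset T$, this gives $T \cap \mathcal{K}(H,K) \neq \{0\}$ at once.

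The substantive point is the third hypothesis: that the quotient map is \emph{not} completely isometric on $M$. Here I would exploit the presence of a non-zero compact operator in $M$. The idea is that if the quotient map $\pi\colon B(H,K) \to B(H,K)/\mathcal{K}(H,K)$ were completely isometric on $M$, then $M$ would embed completely isometrically into the Calkin-type quotient, and in particular every element of $M$ (and every matrix over $M$) would have its norm unchanged after killing the compacts. But $M$ contains a non-zero compact operator $k_0 \neq 0$; this $k_0$ maps to zero under $\pi$, yet $\|k_0\| > 0$, so already $\|\pi(k_0)\| = 0 < \|k_0\|$ shows $\pi$ is not even isometric on $M$, let alone completely isometric. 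This is the cleanest route, and I expect the whole argument to reduce to this one-line observation once the hypotheses are unpacked.

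The step I expect to require the most care is the passage from irreducibility of $M$ as an operator space to irreducibility of the generated TRO $T$ in the precise sense demanded by Theorem \ref{T:rectangular_boundary_thm}, together with confirming that the concrete Hilbert spaces on which $T$ acts are the same $H,K$ appearing in the statement (so that the conclusion ``identity representation of $T$'' lands in the right place). The main obstacle is thus bookkeeping rather than depth: one must ensure that the projection-intertwining relation characterizing irreducibility, which Lemma \ref{L:mult_irred} establishes for the generators $M$, extends to the full TRO $T$. Once that is settled, all three hypotheses of the rectangular boundary theorem hold, and the theorem delivers both the boundary-representation conclusion and, via Section \ref{Subs:shilov}, the identification of $\mathcal{T}_e(M)$ with the TRO generated by $M$.
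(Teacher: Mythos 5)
Your proposal is correct and takes essentially the same route as the paper: the paper likewise invokes Lemma \ref{L:mult_irred} for irreducibility, observes that the quotient map by the compacts annihilates the non-zero compact operator in $M$ and hence is not (completely) isometric on $M$, and then applies Theorem \ref{T:rectangular_boundary_thm}. The only difference is that you explicitly spell out the routine propagation of the projection-intertwining relation from the generating subspace $M$ to the generated TRO $T$, a point the paper leaves implicit.
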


\begin{proof}
Lemma \ref{L:mult_irred} shows that $M$ acts irreducibly. Moreover, the
quotient map by the compacts is not isometric on $M$ since $M$ contains a
non-zero compact operator. An application of the rectangular boundary
theorem (Theorem \ref{T:rectangular_boundary_thm}) now finishes the proof.
\end{proof}

For $s\in \mathbb{R}$, let 
\begin{equation*}
H_{s}=\Big\{f(z)=\sum_{n=0}^{\infty
}a_{n}z^{n}:||f||_{H_s^2}=\sum_{n=0}^{\infty }|a_{n}|^{2}(n+1)^{-s}<\infty %
\Big\}.
\end{equation*}%
This is a reproducing kernel Hilbert space on the open unit disc $\mathbb{D}$
with reproducing kernel 
\begin{equation*}
k_{s}(z,w)=\sum_{n=0}^{\infty }(n+1)^{s}(z\overline{w})^{n}.
\end{equation*}%
This scale of spaces is a frequent object of study in the theory of
reproducing kernel Hilbert spaces. The space $H_{0}$ is the classical Hardy
space $H^{2}$, the space $H_{-1}$ is the Dirichlet space, and the space $%
H_{1}$ is the Bergman space.

The elements of $\Mult(H_{s},H_{t})$ were characterized in \cite{Taylor66}
and \cite{Stegenga80}. We remark that the spaces $\mathcal{D}_{\alpha }$ of 
\cite{Taylor66} are related to the spaces above via the formula $\mathcal{D}%
_{\alpha }=H_{-\alpha }$. In \cite{Stegenga80}, a slightly different
convention is used. There, $\mathcal{D}_{\alpha }=H_{-2\alpha }$, at least
with equivalent norms. Theorem 4 of \cite{Taylor66} shows that $\Mult%
(H_{s},H_{t})=\{0\} $ if $s>t$. On the other hand, if $s\leq t$, then $%
H_{s}\subset H_{t}$, hence $\Mult(H_{s})\subset \Mult(H_{s},H_{t})$. Since $%
\Mult(H_{s})$ at least contains the polynomials, the same is true for $\Mult%
(H_{s},H_{t})$.

In the square case $s=t$, boundary representations of operator spaces
related to the algebras $\Mult(H_{s})$, and their analogs on higher
dimensional domains, were studied in \cite{GHX04,KS13,CH}; see in particular 
\cite[Section 2]{GHX04} and \cite[Section 5.2]{KS13}. It is well known that
if $s\geq 0$, then $\Mult(H_{s})=H^{\infty }$, the algebra of all bounded
analytic functions on the unit disc, endowed with the supremum norm. This
can be deduced, for example, from \cite[Proposition 26 (ii)]{Shields74}. In
particular, the C*-envelope of $\Mult(H_{s})$ is commutative, so that the
identity representation of $\Mult(H_{s})$ on $H_{s}$ is not a boundary
representation. On the other hand, if $s<0$, then the identity
representation of $\Mult(H_{s})$ on $H_{s}$ is a boundary representation.
This follows, for instance, from Corollary 2 in Section 2 of \cite{Arveson72}
and its proof.

We use the results above to prove that, in the rectangular case, the
identity representation is always a boundary representation.

\begin{corollary}
If $s < t$, then the identity representation is a boundary representation of 
$\Mult(H_s,H_t)$.
\end{corollary}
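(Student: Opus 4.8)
The plan is to verify that $M=\Mult(H_s,H_t)$ satisfies all four hypotheses of Proposition~\ref{prop:rectangular_mult_boundary}, applied with $H=H_s$ and $K=H_t$, and then quote that proposition directly. Three of the hypotheses are immediate. The constant function $1$ has $\|1\|_{H_s}^2=1$, so $1\in H_s$. Since $s<t$ we have $H_s\subset H_t$, so as noted in the discussion above $M$ contains every polynomial; because the polynomials are dense in $H_t$, so is $M$. Finally, $H_t$ is irreducible: one computes $k_t(z,0)=1$ for every $z\in\mathbb{D}$, so the point $0$ is linked to every point of $\mathbb{D}$ by a nonzero kernel value, and this forbids any partition $\mathbb{D}=X_1\sqcup X_2$ into nonempty sets with $k_t$ vanishing across the partition.

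The one hypothesis that takes genuine work is that $M$ contains a nonzero compact operator, and this is precisely where the strict inequality $s<t$ is used. I would take the constant multiplier $\varphi=1\in M$, so that $M_1\colon H_s\to H_t$ is just the inclusion map $\iota$. With respect to the orthonormal bases of normalized monomials $e_n^{(s)}=(n+1)^{s/2}z^n$ of $H_s$ and $e_n^{(t)}=(n+1)^{t/2}z^n$ of $H_t$, a one-line computation gives $\iota(e_n^{(s)})=(n+1)^{(s-t)/2}e_n^{(t)}$. Thus $\iota$ is diagonal in these bases with singular values $(n+1)^{(s-t)/2}$. Since $s<t$ the exponent is negative, these singular values tend to $0$, and therefore $\iota$ is compact; it is plainly nonzero. (In the square case $s=t$ this operator is the identity and the argument correctly breaks down, matching the fact that the identity representation of $\Mult(H_s)$ need not be a boundary representation.)

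Having checked all four hypotheses, Proposition~\ref{prop:rectangular_mult_boundary} immediately gives that the identity representation is a boundary representation of $\Mult(H_s,H_t)$. The only nontrivial point, and the expected main obstacle, is the compactness of the inclusion $\iota$; once it is recognized as a diagonal operator on the monomial bases, the decay of its singular values afforded by $s<t$ completes the argument.
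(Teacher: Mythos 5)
Your proposal is correct and follows essentially the same route as the paper: both verify the four hypotheses of Proposition \ref{prop:rectangular_mult_boundary} in the same way, with the constant function, density of polynomials, irreducibility via $k_t(\cdot,0)=1$, and compactness of the inclusion $H_s \subset H_t$ obtained from the decay of the ratio $\|z^n\|_{H_t}^2/\|z^n\|_{H_s}^2 = (n+1)^{s-t}$. Your explicit diagonalization of the inclusion on the normalized monomial bases is just a slightly more detailed rendering of the paper's compactness argument.
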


\begin{proof}
We verify that the pair $(H_{s},H_{t})$ satisfies the assumptions of
Proposition \ref{prop:rectangular_mult_boundary}. It is clear that $H_{s}$
contains the constant function $1$. By the remark above, $\Mult(H_{s},H_{t})$
contains the polynomials, and is therefore dense in $K$. Moreover, since $%
k_{t}(0,w)=1$ for all $w\in \mathbb{D}$, the space $H_{t}$ is irreducible.
Finally, since 
\begin{equation*}
\frac{||z^{n}||_{H_t}^{2}}{||z^{n}||_{H_{s}}^{2}}=(n+1)^{s-t},
\end{equation*}%
which tends to zero as $n\rightarrow \infty $, the inclusion $H_{s}\subset
H_{t}$ is compact, so that $M_{1}\in \Mult(H_{s},H_{t})$ is a compact
operator. Therefore, the result follows from Proposition \ref%
{prop:rectangular_mult_boundary}.
\end{proof}

\section{Operator spaces and rectangular matrix convex sets\label%
{Sec:rectangular}}

In the following we will use notation from \cite{effros_matrix_1997} and 
\cite{webster_krein-milman_1999}. In particular, if $V$ and $V^{\prime }$
are vector spaces in duality via a bilinear map $\left\langle \cdot ,\cdot
\right\rangle $, $x=\left[ x_{ij}\right] \in M_{n,m}\left( V\right) $, and $%
\psi =\left[ \psi _{\alpha \beta }\right] \in M_{r,s}\left( V^{\prime
}\right) $, then we let $\left\langle \left\langle x,\psi \right\rangle
\right\rangle $ be the element $\left[ \left\langle x_{ij},\psi _{\alpha
\beta }\right\rangle \right] $ of $M_{nr,ms}\left( \mathbb{C}\right) $,
where the rows of $\left\langle \left\langle x,\psi \right\rangle
\right\rangle $ are indexed by $\left( i,\alpha \right) $ and the columns of 
$\left\langle \left\langle x,\psi \right\rangle \right\rangle $ are indexed
by $\left( j,\beta \right) $. We also let $\psi ^{\left( n,m\right) }$ be
the map $\left\langle \left\langle \cdot ,\psi \right\rangle \right\rangle
:M_{n,m}\left( V\right) \rightarrow M_{nr,ms}\left( \mathbb{C}\right) $.

\subsection{Rectangular matrix convex sets\label{Subsection:convex}}

\begin{definition}
A \emph{rectangular matrix convex set }in a vector space $V$ is a collection 
$\boldsymbol{K}=\left( K_{n,m}\right) $ of subsets of $M_{n,m}\left(
V\right) $ with the property that for any $\alpha _{i}\in M_{n_{i},n}\left( 
\mathbb{C}\right) $ and $\beta _{i}\in M_{m_{i},m}\left( \mathbb{C}\right) $
and $v_{i}\in K_{n_{i},m_{i}}$ for $1\leq i\leq \ell $ such that 
\begin{equation*}
\left\Vert \alpha _{1}^{\ast }\alpha _{1}+\cdots +\alpha _{\ell }^{\ast
}\alpha _{\ell }\right\Vert \left\Vert \beta _{1}^{\ast }\beta _{1}+\cdots
+\beta _{\ell }^{\ast }\beta _{\ell }\right\Vert \leq 1
\end{equation*}%
one has that $\alpha _{1}^{\ast }v_{1}\beta _{1}+\cdots +\alpha _{\ell
}^{\ast }v_{\ell }\beta _{\ell }\in K_{n,m}$.
\end{definition}

When $V$ is a topological vector space, we say that $\boldsymbol{K}$ is
compact if $K_{n,m}$ is compact for every $n,m$. The following
characterization of rectangular matrix convex sets can be easily verified
using Proposition \ref{Proposition:dilation-TRO} and the fact that any
finite-dimensional representation of $M_{n,m}\left( \mathbb{C}\right) $ as a
TRO is unitarily conjugate to a finite direct sum of copies of the identity
representation \cite[Lemma 3.2.3]{bohle_k-theory_2011}.

\begin{lemma}
Suppose that $\boldsymbol{K}=\left( K_{n,m}\right) $ where $K_{n,m}\subset
M_{n,m}\left( V\right) $. The following assertions are equivalent:

\begin{enumerate}
\item $\boldsymbol{K}$ is a rectangular convex set;

\item $x\oplus y\in K_{n+m,r+s}$ for any $x\in K_{n,r}$ and $y\in K_{m,s}$,
and $\alpha ^{\ast }x\beta \in K_{r,s}$ for any $x\in K_{n,m}$, $\alpha \in
M_{n,r}\left( \mathbb{C}\right) $ and $\beta \in M_{m,s}\left( \mathbb{C}%
\right) $ with $\left\Vert \alpha ^{\ast }\alpha \right\Vert \left\Vert
\beta ^{\ast }\beta \right\Vert \leq 1$;

\item $x\oplus y\in K_{n+m,r+s}$ for any $x\in K_{n,r}$ and $y\in K_{m,s}$,
and $\left( \sigma \otimes id_{V}\right) \left[ K_{n,m}\right] \subset
K_{r,s}$ for any completely contractive map $\sigma :M_{n,m}\left( \mathbb{C}%
\right) \rightarrow M_{r,s}\left( \mathbb{C}\right) $.
\end{enumerate}
\end{lemma}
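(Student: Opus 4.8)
The plan is to prove the cycle $(1)\Rightarrow(2)\Rightarrow(3)\Rightarrow(1)$, the only substantial step being $(2)\Rightarrow(3)$. The implication $(1)\Rightarrow(2)$ is immediate: the compression assertion in $(2)$ is the case $\ell=1$ of the defining condition, and the direct sum of $x\in K_{n,r}$, $y\in K_{m,s}$ arises by taking $\ell=2$, $v_1=x$, $v_2=y$ and coefficients $\alpha_1=[\,I_n\ 0\,]$, $\alpha_2=[\,0\ I_m\,]$, $\beta_1=[\,I_r\ 0\,]$, $\beta_2=[\,0\ I_s\,]$, so that $\alpha_1^{\ast}\alpha_1+\alpha_2^{\ast}\alpha_2=I_{n+m}$ and $\beta_1^{\ast}\beta_1+\beta_2^{\ast}\beta_2=I_{r+s}$ have norm product $1$ while $\alpha_1^{\ast}x\beta_1+\alpha_2^{\ast}y\beta_2=x\oplus y$. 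For $(3)\Rightarrow(1)$ I would first record that, for any matching dimensions, a compression $z\mapsto\alpha^{\ast}z\beta$ with $\|\alpha^{\ast}\alpha\|\,\|\beta^{\ast}\beta\|\le1$ is a completely contractive map of matrix spaces, since its cb-norm is at most $\|\alpha\|\,\|\beta\|=(\|\alpha^{\ast}\alpha\|\,\|\beta^{\ast}\beta\|)^{1/2}\le1$; thus $(3)$ already subsumes the compression property of $(2)$. Given a rectangular convex combination $\sum_i\alpha_i^{\ast}v_i\beta_i$, I would form the iterated direct sum $v=\bigoplus_i v_i\in K_{N,M}$ (with $N=\sum_i n_i$, $M=\sum_i m_i$) using the direct-sum hypothesis, and then compress by the stacked matrices $\alpha=[\alpha_i]_i$, $\beta=[\beta_i]_i$, which satisfy $\alpha^{\ast}\alpha=\sum_i\alpha_i^{\ast}\alpha_i$, $\beta^{\ast}\beta=\sum_i\beta_i^{\ast}\beta_i$ and $\alpha^{\ast}v\beta=\sum_i\alpha_i^{\ast}v_i\beta_i$, landing in $K_{n,m}$.

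The crux is $(2)\Rightarrow(3)$. Fix a completely contractive $\sigma:M_{n,m}(\mathbb{C})\to M_{r,s}(\mathbb{C})$ and $x\in K_{n,m}$. Everything hinges on a representation
\[
\sigma(z)=A^{\ast}(I_d\otimes z)B\qquad(z\in M_{n,m}(\mathbb{C})),
\]
for some $d$ and matrices $A\in M_{nd,r}(\mathbb{C})$, $B\in M_{md,s}(\mathbb{C})$ with $\|A^{\ast}A\|\,\|B^{\ast}B\|\le1$, where $I_d\otimes z$ denotes the block-diagonal inflation of $z$. To produce it, I would cut $\sigma$ down to its essential corners $H_0=\overline{\sum_z\operatorname{ran}\sigma(z)^{\ast}}$ and $K_0=\overline{\sum_z\operatorname{ran}\sigma(z)}$, on which its corestriction is nondegenerate, and normalize by the cb-norm to view it as a rectangular operator state. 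Proposition~\ref{Proposition:dilation-TRO} dilates this operator state to a nondegenerate finite-dimensional triple morphism $\theta$ of $M_{n,m}(\mathbb{C})$, and by the classification of finite-dimensional TRO representations \cite[Lemma 3.2.3]{bohle_k-theory_2011}, $\theta$ is unitarily equivalent to a $d$-fold direct sum of the identity representation $z\mapsto I_d\otimes z$. Absorbing the dilating isometries, the corner inclusions, and the intertwining unitaries into $A$ and $B$ yields the displayed formula; the scalar cb-norm factor and the degenerate directions are absorbed into the norms of $A$ and $B$, and the case $\sigma=0$ is handled directly, as $0\in K_{r,s}$ by compressing any element of $K_{n,m}$ by zero. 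Given the representation, $I_d\otimes x=x\oplus\cdots\oplus x\in K_{nd,md}$ by iterating the direct-sum hypothesis, and then $(\sigma\otimes\mathrm{id}_V)(x)=A^{\ast}(I_d\otimes x)B\in K_{r,s}$ by the compression hypothesis.

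I expect the representation $\sigma(z)=A^{\ast}(I_d\otimes z)B$ to be the only real obstacle. The delicate points are reducing a general completely contractive map---which need be neither nondegenerate nor of cb-norm one---to a genuine rectangular operator state to which Proposition~\ref{Proposition:dilation-TRO} applies, and then tracking the block structure carefully enough that the dilating isometries and the unitaries from the structure theorem recombine into honest matrices $A,B$ meeting the bound $\|A^{\ast}A\|\,\|B^{\ast}B\|\le1$. Once this is in hand, every remaining step reduces to the direct-sum and compression operations, which is exactly why the other two implications are routine.
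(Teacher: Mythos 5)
Your proof is correct, and it follows precisely the route the paper has in mind: the paper gives no detailed argument, only the remark that the lemma ``can be easily verified using Proposition \ref{Proposition:dilation-TRO} and the fact that any finite-dimensional representation of $M_{n,m}\left( \mathbb{C}\right)$ as a TRO is unitarily conjugate to a finite direct sum of copies of the identity representation,'' and these are exactly the two ingredients from which you assemble the key representation $\sigma(z)=A^{\ast}\left( I_{d}\otimes z\right) B$ with $\left\Vert A^{\ast }A\right\Vert \left\Vert B^{\ast }B\right\Vert \leq 1$. Your handling of the routine implications, the reduction to a nondegenerate operator state, and the normalization of the cb-norm fills in the paper's sketch faithfully, so nothing further is needed.
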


It is clear that, if $\boldsymbol{K}$ is a rectangular matrix convex set,
then $\left( K_{n,n}\right) $ is a matrix convex set in the sense of \cite%
{wittstock_matrix_1984}. Furthermore if $\boldsymbol{K}$ and\textbf{\ }$%
\boldsymbol{T}$ are rectangular matrix convex sets such that $T_{n}=K_{n}$
for every $n\in \mathbb{N}$ then $T_{n,m}=K_{n,m}$ for every $n,m\in \mathbb{%
N}$. If $\boldsymbol{S}=\left( S_{n,m}\right) $ is a collection of subsets
of a (topological) vector space $V$, the (closed) rectangular matrix convex
hull of $\boldsymbol{S}$ is the smallest (closed) rectangular matrix convex
set containing $\boldsymbol{S}$.

\begin{example}
\label{Example:complete-ball}Suppose that $X$ is an operator space. Set $%
K_{n,m}$ to be space of completely contractive maps from $X$ to $%
M_{n,m}\left( \mathbb{C}\right) $. Then $\mathrm{C\mathrm{Ball}}(X)=\left(
K_{n,m}\right) $ is a rectangular matrix convex set.
\end{example}

\subsection{The rectangular polar theorem\label{Subsection:polar}}

Suppose that $V$ and $V^{\prime }$ are vector spaces in duality. We endow
both $V$ and $V^{\prime }$ with the weak topology induced from such a
duality. Let $\boldsymbol{S}=\left( S_{n,m}\right) $ be a collection of
subsets $S_{n,m}\subset M_{n,m}\left( V\right) $. We define the \emph{%
rectangular matrix polar} $\boldsymbol{S}^{\rho }$ to be the closed
rectangular matrix convex subset of $V^{\prime }$ such that $f\in
S_{n,m}^{\rho }$ if and only if $\left\Vert \left\langle \left\langle
v,f\right\rangle \right\rangle \right\Vert \leq 1$ for every $r,s\in \mathbb{%
N}$ and every $v\in S_{r,s}$. The same proof as \cite[Lemma 5.1]%
{effros_matrix_1997} shows that $f\in S_{n,m}^{\rho }$ if and only if $%
\left\Vert \left\langle \left\langle v,f\right\rangle \right\rangle
\right\Vert \leq 1$ for every $v\in S_{n,m}$.

If $A\subset V$, then its \emph{absolute polar} $A^{\circ }$ is the set of $%
f\in V^{\prime }$ such that $\left\vert \left\langle v,f\right\rangle
\right\vert \leq 1$ for every $v\in A$. The classical bipolar theorem
asserts that the absolute bipolar $A^{\circ \circ }$ is the closed
absolutely convex hull of $A$ \cite[Theorem 8.1.12]{conway_course_1990}. We
will prove below the rectangular analog of this fact. The proof is analogous
to the one of \cite[Theorem 5.4]{effros_matrix_1997}.

\begin{theorem}
If $\boldsymbol{S}=\left( S_{n,m}\right) $ is a collection of subsets $%
S_{n,m}\subset M_{n,m}\left( V\right) $, then the rectangular matrix bipolar 
$\boldsymbol{S}^{\rho \rho }$ is the closed rectangular matrix convex hull
of $\boldsymbol{S}$.
\end{theorem}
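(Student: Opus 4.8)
The plan is to establish the two inclusions separately, after a standard reduction. First I would record the routine facts underlying the easy half: the rectangular matrix polar of any collection is a weakly closed rectangular matrix convex subset of $V'$, and $\boldsymbol{S}\subseteq\boldsymbol{S}^{\rho\rho}$ directly from the definition (if $v\in S_{n,m}$ and $f\in S^{\rho}_{r,s}$ then $\|\langle\langle v,f\rangle\rangle\|\le 1$, so $v\in S^{\rho\rho}_{n,m}$). Writing $\boldsymbol{K}$ for the closed rectangular matrix convex hull of $\boldsymbol{S}$, these two facts give $\boldsymbol{K}\subseteq\boldsymbol{S}^{\rho\rho}$. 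For the reverse inclusion I would first note that $\boldsymbol{S}^{\rho}=\boldsymbol{K}^{\rho}$: one inclusion is monotonicity of the polar, and the other holds because for a fixed $f$ the collection $\big(\{v\in M_{n,m}(V):\|\langle\langle v,f\rangle\rangle\|\le 1\}\big)_{n,m}$ is itself a closed rectangular matrix convex set (it is the polar of the singleton $\{f\}$), hence contains $\boldsymbol{K}$ as soon as it contains $\boldsymbol{S}$. Thus $\boldsymbol{S}^{\rho\rho}=\boldsymbol{K}^{\rho\rho}$, and since $\boldsymbol{K}\subseteq\boldsymbol{K}^{\rho\rho}$ is already known, the theorem reduces to the separation statement $\boldsymbol{K}^{\rho\rho}\subseteq\boldsymbol{K}$ for a closed rectangular matrix convex set $\boldsymbol{K}$.

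So the entire content is: given $v_0\in M_{n_0,m_0}(V)\setminus K_{n_0,m_0}$, produce $f\in K^{\rho}$ with $\|\langle\langle v_0,f\rangle\rangle\|>1$, exhibiting $v_0\notin K^{\rho\rho}_{n_0,m_0}$. The key preliminary computation is that, writing $\langle u,h\rangle_{\mathrm{tr}}=\sum_{\alpha,\beta}\langle u_{\alpha\beta},h_{\alpha\beta}\rangle$ for the trace pairing of $M_{r,s}(V)$ with $M_{r,s}(V')$ and $\|\cdot\|_2$ for the Hilbert--Schmidt norm, one has for every $h\in M_{r,s}(V')$ and $w\in M_{a,b}(V)$
\[
\|\langle\langle w,h\rangle\rangle\|=\sup\big\{\,|\langle X^{\ast}wZ,\,h\rangle_{\mathrm{tr}}|: X\in M_{a,r}(\mathbb{C}),\ Z\in M_{b,s}(\mathbb{C}),\ \|X\|_2,\|Z\|_2\le 1\,\big\},
\]
which is just the realization of the operator norm of $\langle\langle w,h\rangle\rangle$ as a supremum over unit vectors, rewritten through the matrix pictures of those vectors. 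Since $\|X\|_{op}\|Z\|_{op}\le\|X\|_2\|Z\|_2\le 1$, each compression $X^{\ast}wZ$ lies in $K_{r,s}$ whenever $w\in\boldsymbol{K}$; hence any $f\in M_{r,s}(V')$ with $|\langle u,f\rangle_{\mathrm{tr}}|\le 1$ for all $u\in K_{r,s}$ automatically satisfies $\|\langle\langle w,f\rangle\rangle\|\le 1$ for all $w\in\boldsymbol{K}$, i.e.\ $f\in K^{\rho}$. It would therefore suffice to find $r,s$ and Hilbert--Schmidt unit matrices $X,Z$ with $X^{\ast}v_0Z\notin K_{r,s}$ and then separate this point from the closed absolutely convex set $K_{r,s}$ by an ordinary scalar Hahn--Banach argument.

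The hard part, which I expect to be the main obstacle, is exactly the production of a matrix functional that \emph{realizes} the operator norm. A naive scalar separation of $v_0$ from $K_{n_0,m_0}$ at level $(n_0,m_0)$ does not suffice: the Hilbert--Schmidt normalization above costs a factor $\sqrt{n_0m_0}$, so a scalar separator $h$ with $|\langle v_0,h\rangle_{\mathrm{tr}}|>1$ only guarantees $\|\langle\langle v_0,h\rangle\rangle\|>(n_0m_0)^{-1/2}$, which need not exceed $1$. This reflects the genuinely matricial nature of the separation: one must separate $v_0$ from $\boldsymbol{K}$ by an operator-valued rather than a scalar functional whose norm is faithfully attained. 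I would handle this exactly as in the proof of \cite[Theorem 5.4]{effros_matrix_1997}, upgrading the scalar Hahn--Banach separation to a rectangular matricial Hahn--Banach separation; here the rectangular structure makes the injectivity of $B(H,K)$ (the Haagerup--Paulsen--Wittstock extension theorem, used throughout this paper) the natural device for extending and norming the separating functional. Granting this matricial separation, the argument closes as above: the resulting $f$ lies in $K^{\rho}$ by the compression computation, while $\|\langle\langle v_0,f\rangle\rangle\|>1$ certifies $v_0\notin\boldsymbol{K}^{\rho\rho}$, giving $\boldsymbol{K}^{\rho\rho}\subseteq\boldsymbol{K}$ and hence the theorem.
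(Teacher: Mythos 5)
Your skeleton is the same as the paper's: establish the easy inclusion, observe that $\boldsymbol{S}^{\rho}=\boldsymbol{K}^{\rho}$, and reduce the whole theorem to the separation statement that for $v_{0}\in M_{n,m}(V)\setminus K_{n,m}$ there exists $\varphi\in\boldsymbol{K}^{\rho}$ with $\Vert\langle\langle v_{0},\varphi\rangle\rangle\Vert>1$. Your preliminary computations are correct and in places sharper than what the paper records: the identity realizing $\Vert\langle\langle w,h\rangle\rangle\Vert$ as a supremum of trace pairings against Hilbert--Schmidt contractions, the consequence that a functional whose trace pairing is bounded by $1$ on $K_{r,s}$ automatically lies in $K^{\rho}_{r,s}$, and the diagnosis that a naive scalar separation at level $(n,m)$ loses the factor $\sqrt{nm}$. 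That diagnosis is exactly right, and it is exactly the point.

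But the point is then not proved. ``Upgrade the scalar Hahn--Banach separation to a rectangular matricial Hahn--Banach separation, exactly as in the proof of \cite[Theorem 5.4]{effros_matrix_1997}'' names the missing step rather than supplying it: that reference treats square matrix convex sets, and the rectangular adaptation is the entire content of the theorem you are proving. Moreover, the one concrete mechanism you propose --- injectivity of $B(H,K)$ via Haagerup--Paulsen--Wittstock --- is not what drives that argument; extension theorems for maps into $B(H,K)$ play no role here. What the paper actually imports (asserting that the proof of \cite[Theorem C]{effros_abstract_1993} goes through in the rectangular setting) is a factorization of the scalar separating functional itself: if $F$ is a linear functional on $M_{n,m}(V)$ with $\Vert F|_{K_{n,m}}\Vert\le 1$, then there are states $p$ on $M_{n}(\mathbb{C})$ and $q$ on $M_{m}(\mathbb{C})$ with $|F(\alpha^{*}v\beta)|^{2}\le p(\alpha^{*}\alpha)\,q(\beta^{*}\beta)$ for $v\in K_{r,s}$, and from this (a Cauchy--Schwarz/GNS-type argument with the two states) contractive columns $\gamma\in M_{n^{2},1}(\mathbb{C})$, $\delta\in M_{m^{2},1}(\mathbb{C})$ and a map $\varphi:V\to M_{n,m}(\mathbb{C})$ such that $F(w)=\gamma^{*}\langle\langle w,\varphi\rangle\rangle\delta$ and $\Vert\langle\langle w,\varphi\rangle\rangle\Vert\le 1$ for every $w\in K_{r,s}$ and all $r,s$. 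Granting that, the theorem is one line: $\Vert\langle\langle v_{0},\varphi\rangle\rangle\Vert\ge|\gamma^{*}\langle\langle v_{0},\varphi\rangle\rangle\delta|=|F(v_{0})|>1$. In other words, the scalar separator produced by the classical bipolar theorem is never replaced by a different, matricial one; it is rewritten, so the factor $\sqrt{nm}$ never enters, and your compression machinery becomes unnecessary. Finally, your intermediate reduction --- find Hilbert--Schmidt unit matrices $X,Z$ with $X^{*}v_{0}Z\notin K_{r,s}$ and separate at that level --- is logically sufficient but is a dead end: nothing in your outline (or in the paper) produces such a compression, and the correct argument never needs one.
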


The proof of \cite[Theorem C]{effros_abstract_1993} shows that if $%
\boldsymbol{K}$ is a rectangular matrix convex set in a vector space $V$,
and $F$ is a linear functional on $M_{n,m}\left( V\right) $ satisfying $%
\left\Vert F|_{K_{n,m}}\right\Vert \leq 1$, then

\begin{enumerate}
\item there exist states $p$ on $M_{n}\left( \mathbb{C}\right) $ and $q$ on $%
M_{m}\left( \mathbb{C}\right) $ such that $\left\vert F\left( \alpha ^{\ast
}v\beta \right) \right\vert ^{2}\leq p\left( \alpha ^{\ast }\alpha \right)
q\left( \beta ^{\ast }\beta \right) $ for every $r,s\in \mathbb{N}$, $\alpha
\in M_{n,r}\left( \mathbb{C}\right) $, $\beta \in M_{m,s}\left( \mathbb{C}%
\right) $, and $v\in M_{r,s}\left( V\right) $, and

\item there exist matrices $\gamma \in M_{n^{2},1}\left( \mathbb{C}\right) $%
, $\delta \in M_{m^{2},1}\left( \mathbb{C}\right) $, and a map $\varphi
:V\rightarrow M_{n,m}\left( \mathbb{C}\right) $, such that $F\left( w\right)
=\gamma ^{\ast }\left\langle \left\langle w,\varphi \right\rangle
\right\rangle \delta $ for every $w\in M_{n,m}\left( W\right) $ and $%
\left\Vert \left\langle \left\langle w,\varphi \right\rangle \right\rangle
\right\Vert \leq 1$ for every $r,s\in \mathbb{N}$ and $w\in K_{r,s}$.
\end{enumerate}

From this one can easily deduce the following proposition, which gives the
rectangular matrix bipolar theorem as an easy consequence.

\begin{proposition}
Suppose that $V$ and $V^{\prime }$ are vector spaces in duality, and $%
\boldsymbol{K}$ is a compact rectangular convex space in $V$. If $v_{0}\in
M_{n,m}\left( V\right) \backslash K_{n,m}$, then there exists $\varphi \in
K_{n,m}^{\rho }$ such that $\left\Vert \left\langle \left\langle
v_{0},\varphi \right\rangle \right\rangle \right\Vert >1$.
\end{proposition}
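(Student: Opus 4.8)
The plan is to prove the contrapositive, deducing everything from the two structural facts quoted immediately before the statement, which are the rectangular analog of the machinery in \cite[Theorem C]{effros_abstract_1993}. So suppose $v_0 \in M_{n,m}(V) \setminus K_{n,m}$. Since $\boldsymbol{K}$ is compact and we are working with the weak topology from the duality, $K_{n,m}$ is a closed convex subset of $M_{n,m}(V)$; by the classical Hahn-Banach separation theorem there is a linear functional $F$ on $M_{n,m}(V)$ that strictly separates $v_0$ from $K_{n,m}$. After rescaling, I may arrange $\|F|_{K_{n,m}}\| \leq 1$ while $|F(v_0)| > 1$. This is the only place where compactness (and hence closedness) of $\boldsymbol{K}$ is genuinely used, and it is routine.

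Now I invoke conclusion (2) of the quoted result: there exist matrices $\gamma \in M_{n^2,1}(\mathbb{C})$, $\delta \in M_{m^2,1}(\mathbb{C})$, and a map $\varphi : V \to M_{n,m}(\mathbb{C})$ such that
\begin{equation*}
F(w) = \gamma^\ast \langle\langle w, \varphi \rangle\rangle \delta
\end{equation*}
for every $w \in M_{n,m}(V)$, together with the crucial bound $\|\langle\langle w, \varphi \rangle\rangle\| \leq 1$ for every $r,s$ and every $w \in K_{r,s}$. The second bound says precisely that $\varphi \in K_{n,m}^\rho$ (after possibly reindexing $\varphi$ as an element of the appropriate matrix level of $V'$, using the definition of the rectangular matrix polar). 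So the candidate separating element is exactly this $\varphi$, and the only remaining task is to show $\|\langle\langle v_0, \varphi \rangle\rangle\| > 1$.

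For that last step I argue by contradiction using the representation $F(w) = \gamma^\ast \langle\langle w, \varphi \rangle\rangle \delta$. Since $F$ is a scalar-valued functional on $M_{n,m}(V)$ while $\langle\langle w, \varphi \rangle\rangle$ is a matrix, the vectors $\gamma, \delta$ contract it to a scalar; by Cauchy-Schwarz one gets $|F(w)| \leq \|\gamma\|\, \|\langle\langle w, \varphi \rangle\rangle\|\, \|\delta\|$, and I will normalize so that $\|\gamma\| = \|\delta\| = 1$ (rescaling $\varphi$ correspondingly, which does not affect membership in $K_{n,m}^\rho$ once one tracks the normalization carefully, or alternatively absorbing the constants into the scale of $F$). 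If $\|\langle\langle v_0, \varphi \rangle\rangle\|$ were $\leq 1$, this inequality would force $|F(v_0)| \leq 1$, contradicting $|F(v_0)| > 1$. Hence $\|\langle\langle v_0, \varphi \rangle\rangle\| > 1$, as required.

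The main obstacle I anticipate is bookkeeping rather than ideas: correctly matching the normalizations between $F$, $\gamma$, $\delta$, and $\varphi$ so that the single scalar inequality $|F(w)| \le \|\langle\langle w, \varphi\rangle\rangle\|$ holds on all of $M_{n,m}(V)$ while simultaneously $\varphi \in K_{n,m}^\rho$ and $|F(v_0)| > 1$ survives. Concretely, one must be careful that the factors $\|\gamma\|\|\delta\|$ extracted from the representation of $F$ are compatible with the polar condition; the quoted result is tailored so that these constants can be normalized to $1$, but verifying this requires unwinding the definition of $\langle\langle \cdot, \cdot \rangle\rangle$ at the matrix level $(n,m)$ and checking that the reindexing of $\varphi$ as an element of the dual matrix space does not alter the norm $\|\langle\langle v_0, \varphi\rangle\rangle\|$. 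Everything else is a direct transcription of the Effros-Winkler separation argument \cite[Theorem 5.4]{effros_matrix_1997} into the rectangular setting.
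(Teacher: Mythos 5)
Your proposal is correct and follows essentially the same route as the paper's own proof: Hahn--Banach separation produces $F$ with $\Vert F|_{K_{n,m}}\Vert \leq 1$ and $\vert F(v_{0})\vert >1$, fact (2) quoted before the proposition produces $\varphi \in K_{n,m}^{\rho }$ and vectors $\gamma ,\delta $ with $F(w)=\gamma ^{\ast }\langle \langle w,\varphi \rangle \rangle \delta $, and contractivity of $\gamma ,\delta $ gives $\Vert \langle \langle v_{0},\varphi \rangle \rangle \Vert \geq \vert F(v_{0})\vert >1$. One clarification on the point you flag: the rescaling maneuvers you sketch are unnecessary and in fact circular (multiplying $\varphi $ by $\Vert \gamma \Vert \Vert \delta \Vert $ when this exceeds $1$ would push $\varphi $ out of the polar), but no rescaling is needed, since in the construction behind fact (2) the vectors $\gamma ,\delta $ arise from the states $p,q$ of fact (1) (essentially as vectorized square roots of their density matrices) and are therefore automatically contractive, which is exactly the form in which the paper invokes the remarks.
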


\begin{proof}
By the classical bipolar theorem there exists a continuous linear functional 
$F$ on $M_{n,m}\left( V\right) $ such that $\left\Vert
F|_{K_{n,m}}\right\Vert \leq 1$ and $\left\vert F\left( v_{0}\right)
\right\vert >1$. By the remarks above there exists $\varphi \in
K_{n,m}^{\rho }$ and contractive $\gamma \in M_{n^{2}\times 1}\left( \mathbb{%
C}\right) $ and $\delta \in M_{m^{2}\times 1}\left( \mathbb{C}\right) $ such
that $F(v)=\gamma ^{\ast }\left\langle \left\langle v,\varphi \right\rangle
\right\rangle \delta $. Thus we have $\left\Vert \left\langle \left\langle
v_{0},\varphi \right\rangle \right\rangle \right\Vert \geq \left\Vert \gamma
^{\ast }\left\langle \left\langle v_{0},\varphi \right\rangle \right\rangle
\delta \right\Vert =\left\Vert F\left( v_{0}\right) \right\Vert >1$.
\end{proof}

\subsection{Representation of rectangular convex sets\label%
{Subsection:representation}}

Suppose that $\boldsymbol{K}$ is a rectangular matrix convex set in a vector
space $V$. A \emph{rectangular matrix convex combination} in a rectangular
convex set $\boldsymbol{K}$ is an expression of the form $\alpha _{1}^{\ast
}v_{1}\beta _{1}+\cdots +\alpha _{\ell }^{\ast }v_{\ell }\beta _{\ell }$ for 
$v_{i}\in K_{n_{i},m_{i}}$, $\alpha _{i}\in M_{n_{i},n}\left( \mathbb{C}%
\right) $, and $\beta _{i}\in M_{m_{i},m}\left( \mathbb{C}\right) $ such
that $\alpha _{1}^{\ast }\alpha _{1}+\cdots +\alpha _{\ell }^{\ast }\alpha
_{\ell }=1$, and $\beta _{1}^{\ast }\beta _{1}+\cdots +\beta _{\ell }^{\ast
}\beta _{\ell }=1$. A\emph{\ proper rectangular matrix convex combination}
is a rectangular convex combination $\alpha _{1}^{\ast }v_{1}\beta
_{1}+\cdots +\alpha _{\ell }^{\ast }v_{\ell }\beta _{\ell }$ where
furthermore $\alpha _{1},\ldots ,\alpha _{\ell }$ and $\beta _{1},\ldots
,\beta _{\ell }$ are right invertible. Observe that these notions are a
particular instance of the notions of (proper) rectangular operator convex
combination introduced in Subsection \ref{Subs:rectangular}.

\begin{definition}
A \emph{rectangular matrix affine mapping }from a rectangular convex set $%
\boldsymbol{K}$ to a rectangular convex set $\mathbf{T}$ is a sequence $%
\boldsymbol{\theta }$ of maps $\theta _{n,m}:K_{n,m}\rightarrow T_{n,m}$
that preserves rectangular matrix convex combinations.
\end{definition}

When $\boldsymbol{K}$ and $\boldsymbol{T}$ are compact rectangular convex
sets, we say that $\boldsymbol{\theta }$ is continuous (respectively, a
homeomorphism) when $\theta _{n,m}$ is continuous (respectively, a
homeomorphism) for every $n,m\in \mathbb{N}$.

Given a compact rectangular matrix convex set\emph{\ }$\boldsymbol{K}$ we
let $A_{\rho }(\boldsymbol{K})$ be the complex vector space of continuous
rectangular matrix affine mappings from $\boldsymbol{K}$ to $\mathrm{C%
\mathrm{Ball}}\left( \mathbb{C}\right) $. Here $\mathrm{C\mathrm{Ball}}%
\left( \mathbb{C}\right) $ is the compact rectangular matrix convex set
defined as in Example \ref{Example:complete-ball}, where $\mathbb{C}$ is
endowed with its canonical operator space structure. The space $A_{\rho }(%
\boldsymbol{K})$ has a natural operator space structure where $M_{n,m}\left(
A_{\rho }(\boldsymbol{K})\right) $ is identified isometrically with a
subspace of $C\left( K_{n,m},M_{n,m}\left( \mathbb{C}\right) \right) $
endowed with the supremum norm. More generally if $Y$ is any operator space,
then we define $A_{\rho }(\boldsymbol{K},Y)$ to be the operator space of
continuous rectangular affine mappings from $\boldsymbol{K}$ to $\mathrm{C%
\mathrm{Ball}}(Y)$. Observe that $M_{n,m}\left( A_{\rho }(\boldsymbol{K}%
)\right) $ is completely isometric to $A_{\rho }(\boldsymbol{K})$.

Starting from the operator space $A_{\rho }(\boldsymbol{K})$ one can
consider the compact rectangular matrix convex set $\mathrm{C\mathrm{Ball}}%
(A_{\rho }(\boldsymbol{K})^{\prime })$ as in Example \ref%
{Example:complete-ball}. Here $A_{\rho }(\boldsymbol{K})^{\prime }$ denotes
the dual space of the operator space $A_{\rho }(\boldsymbol{K})$, endowed
with its canonical operator space structure. There is a canonical
rectangular matrix affine mapping $\boldsymbol{\theta }$ from $\boldsymbol{K}
$ to $\mathrm{C\mathrm{Ball}}(A_{\rho }(\boldsymbol{K})^{\prime })$ given by
point evaluations. It is clear that this map is injective. It is furthermore
surjective in view of the rectangular bipolar theorem. The argument is
similar to the one of the proof of \cite[Proposition 3.5]%
{webster_krein-milman_1999}. This shows that the map $\boldsymbol{\theta }$
is indeed a rectangular matrix affine homeomorphism from $\boldsymbol{K}$
onto $\mathrm{C\mathrm{Ball}}(A_{\rho }(\boldsymbol{K})^{\prime })$. This
implies that the assignment $X\mapsto \mathrm{C\mathrm{Ball}}(X^{\prime })$
is a 1:1 correspondence between operator spaces and rectangular convex sets.
It is also not difficult to verify that this correspondence is in fact an
equivalence of categories, where morphisms between operator spaces are
completely contractive linear maps, and morphisms between rectangular convex
sets are continuous rectangular matrix affine mappings.

\subsection{The rectangular Krein-Milman theorem\label{Subsection:Krein}}

The notion of (proper) rectangular convex combination yields a natural
notion of extreme point in a rectangular convex set. An element $v$ of a
rectangular convex set $\boldsymbol{K}$ is a \emph{rectangular matrix extreme%
} \emph{point} if for any proper rectangular convex combination $\alpha
_{1}^{\ast }v_{1}\beta _{1}+\cdots +\alpha _{\ell }^{\ast }v_{\ell }\beta
_{\ell }=v$ for $v_{i}\in K_{n_{i},m_{i}}$ one has that, for every $1\leq
i\leq \ell $, $n_{i}=n$, $m_{i}=m$, and $v_{i}=u_{i}^{\ast }vw_{i}$ for some
unitaries $u_{i}\in M_{n}\left( \mathbb{C}\right) $ and $w_{i}\in K_{m}$. We
now observe that the notion of rectangular extreme point coincides with the
notion of rectangular operator extreme operator state from Definition \ref%
{Definition:rectangular-operator-extreme}. The argument is borrowed from the
proof of \cite[Theorem\ B]{farenick_extremal_2000}.

\begin{lemma}
\label{Lemma:rectangular-extreme}Suppose that $X$ is an operator space, $%
\boldsymbol{K}=\mathrm{C\mathrm{Ball}}(X)$, and $\phi \in K_{n,m}$. Then $%
\phi $ is a rectangular matrix extreme point of $\boldsymbol{K}$ if and only
if it is a rectangular operator extreme operator state of $X$.
\end{lemma}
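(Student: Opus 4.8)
The plan is to transfer both notions to the Paulsen system $\mathcal{S}(X)$ and the unital completely positive map $\mathcal{S}(\phi)\colon\mathcal{S}(X)\to B(\mathbb{C}^n\oplus\mathbb{C}^m)=M_{n+m}(\mathbb{C})$, and to establish the chain of equivalences
\[
\phi\ \text{rect. matrix extreme}\iff \mathcal{S}(\phi)\ \text{matrix extreme}\iff \mathcal{S}(\phi)\ \text{operator extreme}\iff \phi\ \text{rect. operator extreme}.
\]
The last equivalence is Proposition \ref{Proposition:characterize-rectangular-extreme}, and the middle one is the coincidence of matrix and operator extreme points for unital completely positive maps with finite-dimensional range, recorded after Definition \ref{Definition:rectangular-operator-extreme} following Webster. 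Thus the genuine content is the first equivalence, which relates rectangular matrix convex combinations of $\phi$ to matrix convex combinations of $\mathcal{S}(\phi)$. I stress the subtlety that makes this nontrivial: rectangular matrix extremality constrains only the inner maps $v_i$ (up to unitary conjugacy), whereas operator extremality constrains the coefficients; the Paulsen system is precisely where the two distinguished projections will force the coefficients into rigid form.

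For the direction ``$\mathcal{S}(\phi)$ matrix extreme $\Rightarrow\phi$ rect.\ matrix extreme'' I would start from a proper rectangular matrix convex combination $\phi=\sum_i\alpha_i^*\phi_i\beta_i$, set $\gamma_i=\alpha_i\oplus\beta_i$, and note, as in the proof of Proposition \ref{Proposition:characterize-rectangular-extreme}, that $\mathcal{S}(\phi)=\sum_i\gamma_i^*\mathcal{S}(\phi_i)\gamma_i$ is a proper matrix convex combination. Matrix extremality of $\mathcal{S}(\phi)$ then gives a unitary $U_i$ with $\mathcal{S}(\phi_i)=U_i^*\mathcal{S}(\phi)U_i$; evaluating at the projection $e=\mathrm{diag}(I_K,0)\in\mathcal{S}(X)$ shows that $U_i$ conjugates $\mathrm{diag}(I_n,0)$ to $\mathrm{diag}(I_{n_i},0)$, forcing $n_i=n$, $m_i=m$ and $U_i=u_i\oplus w_i$, whence reading off the corner yields $\phi_i=u_i^*\phi w_i$.

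The converse, ``$\phi$ rect.\ matrix extreme $\Rightarrow\mathcal{S}(\phi)$ matrix extreme'', is the step I expect to be the real obstacle: a proper matrix convex combination $\mathcal{S}(\phi)=\sum_i\Gamma_i^*\Psi_i\Gamma_i$ involves arbitrary matrix states $\Psi_i$ that need not be of Paulsen form, so rectangular matrix extremality of $\phi$ cannot be applied directly. My key idea is to exploit the complementary projections $e=\mathrm{diag}(I_K,0)$ and $f=\mathrm{diag}(0,I_H)$ in $\mathcal{S}(X)$, which satisfy $\mathcal{S}(\phi)(e)=\mathrm{diag}(I_n,0)$ and $\mathcal{S}(\phi)(f)=\mathrm{diag}(0,I_m)$ and hence lie in the multiplicative domain of $\mathcal{S}(\phi)$. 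Writing $\mathcal{S}(\phi)=\Gamma^*\big(\bigoplus_i\Psi_i\big)\Gamma$ with $\Gamma=\mathrm{col}(\Gamma_i)$ an isometry, the equality case of the Kadison--Schwarz inequality applied to $e$ forces both positivity defects to vanish, giving $\big(\bigoplus_i\Psi_i\big)(e)\,\Gamma=\Gamma\,\mathcal{S}(\phi)(e)$, that is $\Psi_i(e)\Gamma_i=\Gamma_i\,\mathrm{diag}(I_n,0)$ for every $i$.

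From here everything falls into place. Squaring and cancelling the right-invertible $\Gamma_i$ in the identity $\Psi_i(e)^2\Gamma_i=\Psi_i(e)\Gamma_i$ shows that $\Psi_i(e)$, and likewise $\Psi_i(f)$, is a projection; hence $e,f$ lie in the multiplicative domain of each $\Psi_i$, so $\Psi_i=\mathcal{S}(\psi_i)$ is itself of Paulsen form for a completely contractive $\psi_i$, while $\Psi_i(e)\Gamma_i=\Gamma_i\,\mathrm{diag}(I_n,0)$ forces $\Gamma_i=\alpha_i\oplus\beta_i$ to be block diagonal. Consequently $\phi=\sum_i\alpha_i^*\psi_i\beta_i$ with $\sum_i\alpha_i^*\alpha_i=I_n$ and $\sum_i\beta_i^*\beta_i=I_m$ is a proper rectangular matrix convex combination, and rectangular matrix extremality of $\phi$ gives $\psi_i=u_i^*\phi w_i$ with $n_i=n$, $m_i=m$; therefore $\Psi_i=(u_i\oplus w_i)^*\mathcal{S}(\phi)(u_i\oplus w_i)$ and the original combination is trivial. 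This completes the first equivalence, and the displayed chain then yields the lemma. I expect the only delicate points to be the bookkeeping in the Kadison--Schwarz equality argument and the verification that right-invertibility of $\Gamma_i$ survives the passage to the corner maps $\alpha_i,\beta_i$.
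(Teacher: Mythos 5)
Your proposal is correct, but it takes a genuinely different route from the paper's proof. The paper attacks the hard implication (rectangular matrix extreme $\Rightarrow$ rectangular operator extreme) head-on by importing the argument of Farenick's Theorem B: matrix extremality first replaces each $\phi_i$ by $u_i^*\phi w_i$; one then forms the unital completely positive map $z\mapsto\sum_i A_i^*zA_i$ on $M_{n+m}(\mathbb{C})$ with $A_i=(u_i\alpha_i)\oplus(w_i\beta_i)$, observes that it fixes the range of $\mathcal{S}(\phi)$, notes that rectangular extremality forces that range to have one-dimensional commutant, and concludes from Arveson's theorem on unital completely positive maps fixing an irreducible family that the map is the identity; the uniqueness part of Choi's representation then gives $A_i=\lambda_i 1$, i.e.\ triviality. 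You never invoke Arveson or Choi directly. Instead, your genuine contribution is the equivalence ``$\phi$ rectangular matrix extreme $\iff$ $\mathcal{S}(\phi)$ matrix extreme'', whose hard direction you prove by a Schwarz-equality and multiplicative-domain argument showing that every proper matrix convex decomposition of $\mathcal{S}(\phi)$ is automatically implemented by block-diagonal coefficients $\Gamma_i=\alpha_i\oplus\beta_i$ and Paulsen-form states $\Psi_i=\mathcal{S}(\psi_i)$; this is a matrix-convex analogue of Lemma \ref{Lemma:below-Paulsen}, which the paper only uses for order decompositions in Proposition \ref{Proposition:characterize-rectangular-extreme}. The Farenick-type content is then outsourced to the finite-dimensional coincidence of matrix and operator extreme points recorded in Subsection \ref{Subs:rectangular} and to the equivalence (2)$\iff$(3) of Proposition \ref{Proposition:characterize-rectangular-extreme}; both precede the lemma and do not depend on it, so there is no circularity. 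The trade-off: the paper's proof is self-contained modulo classical results, while yours is modular, and your intermediate equivalence is a clean dictionary between rectangular extreme points and Webster--Winkler matrix extreme points of the Paulsen system, which has independent interest.

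Three details to tighten when writing this up. First, multiplicative domains are defined for maps on C*-algebras, so before declaring that $e$ and $f$ lie in the multiplicative domain of $\Psi_i$ you must extend $\Psi_i$ to the C*-algebra generated by $\mathcal{S}(X)$ by Arveson's extension theorem, exactly as the paper does in Lemma \ref{Lemma:below-Paulsen}; the extension still sends $e$ to the projection $\Psi_i(e)$, so the argument goes through. Second, right invertibility of $\alpha_i$ and $\beta_i$ does survive, as you suspected: a surjective block-diagonal matrix has surjective blocks. Third, your closing phrase ``the original combination is trivial'' is a slip of terminology: Webster--Winkler matrix extremality asks for unitary equivalence of the components, not triviality of the coefficients, and the unitary equivalence $\Psi_i=(u_i\oplus w_i)^*\mathcal{S}(\phi)(u_i\oplus w_i)$ together with $k_i=n+m$ is exactly what you established, so the conclusion stands.
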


\begin{proof}
It is clear that a rectangular operator extreme point is a rectangular
matrix extreme point. We prove the converse implication. Suppose that $\phi $
is a rectangular matrix extreme point. Let $\phi =\alpha _{1}^{\ast }\phi
_{1}\beta _{1}+\cdots +\alpha _{\ell }^{\ast }\phi _{\ell }\beta _{\ell }$
be a proper rectangular matrix convex combination, where $\phi _{i}\in
K_{n_{i},m_{i}}$ for $i=1,2,\ldots ,\ell $. By assumption, we have that $%
n_{i}=n$ and $m_{i}=m$ for $i=1,2,\ldots ,m$, and there exist unitaries $%
u_{i}\in M_{n}\left( \mathbb{C}\right) $ and $w_{i}\in M_{m}\left( \mathbb{C}%
\right) $ such that $\phi _{i}=u_{i}^{\ast }\phi w_{i}$ for $i=1,2,\ldots
,\ell $. Therefore we have that%
\begin{equation}
\phi =\left( u_{1}\alpha _{1}\right) ^{\ast }\phi \left( w_{1}\beta
_{1}\right) +\cdots +\left( u_{\ell }\alpha _{\ell }\right) ^{\ast }\phi
_{\ell }\left( w_{\ell }\beta _{\ell }\right) \text{.\label%
{Equation:matrix-extreme}}
\end{equation}%
Define $R\subset M_{m+m}\left( \mathbb{C}\right) $ to be the range of $%
\mathcal{S}(\phi )$. Observe that it follows from the fact that $\phi $ is a
rectangular extreme point that the commutant of $R$ is one-dimensional. Set%
\begin{equation*}
A_{i}=%
\begin{bmatrix}
u_{i}\alpha _{i} & 0 \\ 
0 & w_{i}\beta _{i}%
\end{bmatrix}%
\end{equation*}%
for $i=1,2,\ldots ,n$. Define the unital completely positive map $\Psi
:M_{n+m}\left( \mathbb{C}\right) \rightarrow M_{n+m}\left( \mathbb{C}\right) 
$, $z\mapsto A_{1}^{\ast }zA_{1}+\cdots +A_{\ell }^{\ast }zA_{\ell }$. By
Equation \eqref{Equation:matrix-extreme} we have that $\Psi (z)=z$ for every 
$z\in R$. It follows from this and \cite[Theorem 2.11]%
{arveson_subalgebras_1972} that $\Psi (z)=z$ for every $z\in M_{n+m}\left( 
\mathbb{C}\right) $. By the uniqueness statement in the Choi's
representation of a unital completely positive map \cite%
{choi_completely_1975}, we deduce that there exist $\lambda _{i}\in \mathbb{C%
}$ such that $A_{i}=\lambda _{i}1$ for $i=1,2,\ldots ,\ell $. Therefore $%
\alpha _{i}^{\ast }\alpha _{i}=\left( u_{i}\alpha _{i}\right) ^{\ast }\left(
u_{i}\alpha _{i}\right) =\left\vert \lambda _{i}\right\vert ^{2}1$, $\beta
_{i}^{\ast }\beta _{i}=\left( w_{i}\beta _{i}\right) ^{\ast }\left(
w_{i}\beta _{i}\right) =\left\vert \lambda _{i}\right\vert ^{2}1$, and $%
\alpha _{i}^{\ast }\phi _{i}\beta _{i}=\left( u_{i}\alpha _{i}\right) ^{\ast
}\phi \left( w_{i}\beta _{i}\right) =\left\vert \lambda _{i}\right\vert
^{2}\phi $. This concludes the proof that $\phi $ is a rectangular operator
extreme point.
\end{proof}

We denote by $\partial _{\rho }\boldsymbol{K}=\left( \partial _{\rho
}K_{n,m}\right) $ set of rectangular matrix extreme points of $\boldsymbol{K}
$. Recall that the Krein-Milman theorem asserts that, if $K\subset V$ is a
compact convex subset of a topological vector space $V$, then $K$ is the
closed convex hull of the set of its extreme points. The following is the
natural analog of the Krein-Milman theorem for compact rectangular matrix
convex sets. The proof is analogous to the proof of the Krein-Milan theorem
for compact matrix convex sets \cite[Theorem 4.3]{webster_krein-milman_1999}.

\begin{theorem}
\label{Theorem:Krein-Milman}Suppose that $\boldsymbol{K}$ is a compact
rectangular convex set. Then $\boldsymbol{K}$ is the closed rectangular
matrix convex hull of $\partial _{\rho }\boldsymbol{K}$.
\end{theorem}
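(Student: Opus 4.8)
The plan is to follow the classical Krein–Milman template, transported to the rectangular matrix setting by means of the separation result established just above (the unlabeled proposition asserting that a point $v_0\in M_{n,m}(V)\setminus K_{n,m}$ of a compact rectangular matrix convex set can be separated by some $\varphi\in K_{n,m}^{\rho}$). Write $\boldsymbol{L}$ for the closed rectangular matrix convex hull of $\partial_{\rho}\boldsymbol{K}$. Since $\boldsymbol{K}$ is compact, hence closed, and is itself rectangular matrix convex and contains every rectangular matrix extreme point, we immediately get $L_{n,m}\subseteq K_{n,m}$ for all $n,m$; moreover each $L_{n,m}$ is a closed subset of the compact set $K_{n,m}$, so $\boldsymbol{L}$ is again a compact rectangular matrix convex set. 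It therefore suffices to rule out the existence of a point $v_0\in K_{n,m}\setminus L_{n,m}$.

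Suppose such a $v_0$ exists. Applying the separation proposition to $\boldsymbol{L}$ produces $\varphi\in L_{n,m}^{\rho}$ with $\left\Vert\langle\langle v_0,\varphi\rangle\rangle\right\Vert>1$. By the defining property of the rectangular matrix polar, $\varphi\in L_{n,m}^{\rho}$ means $\left\Vert\langle\langle w,\varphi\rangle\rangle\right\Vert\le 1$ for every $w\in L_{r,s}$ and all $r,s$; in particular this holds for every rectangular matrix extreme point, as $\partial_{\rho}\boldsymbol{K}\subseteq\boldsymbol{L}$. Thus it is enough to produce a single rectangular matrix extreme point $e$ with $\left\Vert\langle\langle e,\varphi\rangle\rangle\right\Vert>1$: such an $e$ lies in $\boldsymbol{L}$ and contradicts $\varphi\in L_{n,m}^{\rho}$. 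I would obtain $e$ by showing that $\beta:=\sup\{\left\Vert\langle\langle w,\varphi\rangle\rangle\right\Vert:w\in K_{r,s},\ r,s\in\mathbb{N}\}$, which satisfies $\beta\ge\left\Vert\langle\langle v_0,\varphi\rangle\rangle\right\Vert>1$, is attained at a rectangular matrix extreme point.

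The central mechanism is a monotonicity estimate. Writing the pairing of a rectangular matrix convex combination as $\langle\langle\sum_i\alpha_i^{\ast}w_i\beta_i,\varphi\rangle\rangle=\sum_i(\alpha_i\otimes 1)^{\ast}\langle\langle w_i,\varphi\rangle\rangle(\beta_i\otimes 1)$ and factoring it as a product $R\,T\,C$ with $R$ a row contraction, $T=\mathrm{diag}(\langle\langle w_i,\varphi\rangle\rangle)$, and $C$ a column contraction (using $\sum\alpha_i^{\ast}\alpha_i=1$ and $\sum\beta_i^{\ast}\beta_i=1$), one gets $\left\Vert\langle\langle\sum_i\alpha_i^{\ast}w_i\beta_i,\varphi\rangle\rangle\right\Vert\le\max_i\left\Vert\langle\langle w_i,\varphi\rangle\rangle\right\Vert$, so the value at a rectangular matrix convex combination never exceeds the largest value among its constituents. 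To run the extraction I would first note, via a Smith-type lemma for completely bounded rectangular maps together with the operator space realization $\boldsymbol{K}\cong\mathrm{CBall}(A_{\rho}(\boldsymbol{K})^{\prime})$, that the supremum $\beta$ is already attained on some bounded family of levels; compactness of the finitely many $K_{r,s}$ involved then yields an actual maximizer. Among all maximizers I would select one, $w^{\ast}\in K_{r^{\ast},s^{\ast}}$, of minimal size $r^{\ast}+s^{\ast}$, and argue that $w^{\ast}$ is rectangular matrix extreme.

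The main obstacle is precisely this last step. Given a proper rectangular matrix convex combination $w^{\ast}=\sum_i\alpha_i^{\ast}w_i\beta_i$, the monotonicity estimate forces $\max_i\left\Vert\langle\langle w_i,\varphi\rangle\rangle\right\Vert=\beta$, so some $w_i$ is again a maximizer; right–invertibility of $\alpha_i,\beta_i$ gives $n_i\le r^{\ast}$ and $m_i\le s^{\ast}$, and minimality of $r^{\ast}+s^{\ast}$ then pins every maximizing constituent to the full size $(r^{\ast},s^{\ast})$. The delicate point is to upgrade this to \emph{triviality} of the whole combination, i.e.\ $\alpha_i^{\ast}\alpha_i=\lambda_i 1$, $\beta_i^{\ast}\beta_i=\lambda_i 1$, and $\alpha_i^{\ast}w_i\beta_i=\lambda_i w^{\ast}$. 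I expect to achieve this by analysing the equality case of $\left\Vert R\,T\,C\right\Vert\le\left\Vert R\right\Vert\left\Vert T\right\Vert\left\Vert C\right\Vert$: equality forces $\alpha_i,\beta_i$ to act isometrically on the relevant ranges, and, combined with the rigidity argument already used in the proof of Lemma \ref{Lemma:rectangular-extreme} (the uniqueness in Choi's representation, via \cite{arveson_subalgebras_1972} and \cite{choi_completely_1975}, applied to the unital completely positive map built from the $\alpha_i\oplus\beta_i$ on the range of $\mathcal{S}(w^{\ast})$), yields that each $\alpha_i,\beta_i$ is a scalar multiple of a unitary and each $w_i$ is unitarily equivalent to $w^{\ast}$. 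Carrying this equality analysis through the varying-dimension bookkeeping is the part requiring the most care; once it is in place, $w^{\ast}$ is a rectangular matrix extreme point with $\left\Vert\langle\langle w^{\ast},\varphi\rangle\rangle\right\Vert=\beta>1$, the desired contradiction, completing the proof that $\boldsymbol{K}=\boldsymbol{L}$.
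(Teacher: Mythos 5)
Your overall skeleton --- pass to the closed rectangular matrix convex hull $\boldsymbol{L}$ of $\partial_{\rho}\boldsymbol{K}$, separate a putative point of $K_{n,m}\setminus L_{n,m}$ by some $\varphi\in L_{n,m}^{\rho}$, and then contradict $\varphi\in L_{n,m}^{\rho}$ by producing a rectangular matrix extreme point whose pairing with $\varphi$ has norm greater than $1$ --- is reasonable, and your monotonicity estimate $\left\Vert \left\langle \left\langle \sum_i \alpha_i^{\ast}w_i\beta_i,\varphi\right\rangle \right\rangle \right\Vert \le \max_i \left\Vert \left\langle \left\langle w_i,\varphi\right\rangle \right\rangle \right\Vert$ is correct. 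The fatal gap is the extraction step: it is simply false that a maximizer of minimal size $r^{\ast}+s^{\ast}$ must be rectangular matrix extreme, so no equality-case analysis can close the argument. Concretely, take $\boldsymbol{K}=\mathrm{CBall}(X')$ with $X=M_2(\mathbb{C})$ and $\varphi=2I_2\in M_{1,1}(X)$, so that $\beta=2>1$. The normalized trace $\tau=\frac12\mathrm{tr}\in K_{1,1}$ is a maximizer, since $\left\vert \left\langle \left\langle \tau,\varphi\right\rangle \right\rangle \right\vert =2\tau(I_2)=2=\beta$, and it has the smallest possible size $(1,1)$; yet it is not rectangular matrix extreme: writing $\omega_1,\omega_2$ for the vector states at the standard basis vectors, $\tau=\alpha_1^{\ast}\omega_1\beta_1+\alpha_2^{\ast}\omega_2\beta_2$ with $\alpha_i=\beta_i=\frac{1}{\sqrt{2}}$ is a proper rectangular matrix convex combination which is not trivial, because $\omega_i$ is not a unimodular multiple of $\tau$. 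Note that in this decomposition every constituent is itself a minimal-size maximizer and all the conditions your equality analysis could extract ($\alpha_i,\beta_i$ scalar multiples of unitaries, all maximizing constituents of full size) do hold; what fails is precisely the conclusion $w_i=u_i^{\ast}w^{\ast}v_i$ that triviality requires. This also shows why your appeal to the rigidity argument of Lemma \ref{Lemma:rectangular-extreme} is circular: there, the fact that the constituents are unitary conjugates of $\phi$ (which is what allows one to form the unital completely positive map $\Psi$ fixing the range of $\mathcal{S}(\phi)$, and to get the trivial commutant) is exactly what the extremeness hypothesis supplies; maximization of a single functional controls the $\alpha_i,\beta_i$ at best and says nothing about the $w_i$.

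The paper avoids this trap by never maximizing over $\boldsymbol{K}$. It fixes the level $(n,m)$, builds an auxiliary compact \emph{classically} convex set $\Delta_{n,m}$ of scalar-valued unital completely positive maps $\left( \xi\odot\eta\right)^{\ast}\tilde{\varphi}\left( \xi\odot\eta\right)$ on a Paulsen-type operator system $\tilde{X}$, applies the classical Krein--Milman theorem to $\Delta_{n,m}$, and shows that classical extreme points of $\Delta_{n,m}$ with $\xi,\eta$ right invertible give rectangular matrix extreme points of $\boldsymbol{K}$; the rectangular bipolar theorem together with the $2\times 2$ positivity criterion then finishes the proof. If you wanted to salvage your route, you would need some iterated maximization inside the maximizer sets, but those sets are not faces --- your monotonicity shows only that \emph{some} constituent of a combination attaining $\beta$ is a maximizer, not all --- so the classical Zorn-type face argument does not transfer; this is exactly the difficulty that the Webster--Winkler-style reduction used in the paper is designed to circumvent.
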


\begin{proof}
Suppose that $\boldsymbol{K}$ is a compact rectangular convex set. In view
of the representation theorem from Subsection \ref{Subsection:representation}%
, we can assume without loss of generality that $\boldsymbol{K}=\mathrm{C%
\mathrm{Ball}}(X^{\prime })$ for some operator space $X$. We will assume
that $X$ is concretely represented as a subspace of $B(H)$ for some Hilbert
space $H$. We will also canonically identify $M_{n,r}(X^{\prime })$ with the
space of bounded linear functionals on $M_{n,r}(X)$.

Fix $n,m\in \mathbb{N}$. Let $\tilde{X}$ be the space of operators of the
form 
\begin{equation*}
\begin{bmatrix}
\lambda I^{\oplus n} & x \\ 
y^{\ast } & \mu I^{\oplus m}%
\end{bmatrix}%
\end{equation*}%
for $\lambda ,\mu \in \mathbb{C}$ and $x,y\in M_{n,m}(X)$, where $I^{\oplus
n}$ and $I^{\oplus m}$ are the identity operator on, respectively, the $n$%
-fold and $m$-fold Hilbertian sum of $H$ by itself. If $\varphi \in
M_{r,s}(X^{\prime })$, then we denote by $\tilde{\varphi}\in M_{nr+ms}(%
\tilde{X}^{\prime })$ the element defined by%
\begin{equation*}
\begin{bmatrix}
\lambda I^{\oplus n} & x \\ 
y^{\ast } & \mu I^{\oplus n}%
\end{bmatrix}%
\mapsto 
\begin{bmatrix}
\lambda I_{rn} & \varphi ^{(n,m)}(x) \\ 
\varphi ^{(n,m)}(y)^{\ast } & \mu I_{ms}%
\end{bmatrix}%
\end{equation*}%
where $I_{rn}$ and $I_{ms}$ denote the identity $rn\times rn$ and $ms\times
ms$ matrices. If $\xi \in M_{r,n}\left( \mathbb{C}\right) $ and $\eta \in
M_{s,m}\left( \mathbb{C}\right) $ we also set%
\begin{equation*}
\xi \odot \eta :=%
\begin{bmatrix}
I_{n}\otimes \xi & 0 \\ 
0 & I_{m}\otimes \eta%
\end{bmatrix}%
\text{.}
\end{equation*}%
We let $\Delta _{n,m}$ be the set of elements of $M_{n^{2}+m^{2}}(\tilde{X}%
^{\prime })$ of the form $\left( \xi \odot \eta \right) ^{\ast }\tilde{%
\varphi}\left( \xi \odot \eta \right) $ for $r,s\in \mathbb{N}$, $\varphi
\in K_{r,s}$, $\xi \in M_{r,n}\left( \mathbb{C}\right) $ and $\eta \in
M_{s,m}\left( \mathbb{C}\right) $ such that $\left\Vert \xi \right\Vert
_{2}=\left\Vert \eta \right\Vert _{2}=1$. It is not difficult to verify as
in \cite[\S 4]{webster_krein-milman_1999} that one can assume without loss
of generality that $r\leq n$, $s\leq m$, and $\xi ,\eta $ are right
invertible. The computation below shows that $\Delta _{n,m}$ is convex. If $%
t_{1},t_{2}\in \left[ 0,1\right] $ are such that $t_{1}+t_{2}=1$ then%
\begin{equation*}
t_{1}\left( \xi _{1}\odot \eta _{1}\right) ^{\ast }\tilde{\varphi}_{1}\left(
\xi _{1}\odot \eta _{1}\right) +t_{2}\left( \xi _{1}\odot \eta _{1}\right)
^{\ast }\tilde{\varphi}_{2}\left( \xi _{2}\odot \eta _{2}\right) =\left( \xi
\odot \eta \right) ^{\ast }\varphi \left( \xi \odot \eta \right)
\end{equation*}%
where%
\begin{equation*}
\xi =%
\begin{bmatrix}
t_{1}\xi _{1} \\ 
t_{2}\xi _{2}%
\end{bmatrix}%
\text{, }\eta =%
\begin{bmatrix}
t_{1}\eta _{1} \\ 
t_{2}\eta _{2}%
\end{bmatrix}%
\text{, and }\varphi =%
\begin{bmatrix}
\varphi _{1} & 0 \\ 
0 & \varphi _{2}%
\end{bmatrix}%
\text{.}
\end{equation*}%
Thus $\Delta _{n,m}$ is a compact convex subset of the space of unital
completely positive maps from $\tilde{X}$ to $M_{n^{2}+m^{2}}\left( \mathbb{C%
}\right) $. Consider now an element $\left( \xi \odot \eta \right) ^{\ast }%
\tilde{\varphi}\left( \xi \odot \eta \right) $ of $\Delta _{n,m}$, where $%
\xi \in M_{r,n}\left( \mathbb{C}\right) $ and $\eta \in M_{s,m}\left( 
\mathbb{C}\right) $ are right invertible and $\varphi \in K_{n,m}$. Assume
that $\left( \xi \odot \eta \right) ^{\ast }\tilde{\varphi}\left( \xi \odot
\eta \right) $ is an extreme point of $\Delta _{n,m}$. We claim that this
implies that $\varphi $ is a rectangular extreme point of $\boldsymbol{K}$.
Indeed suppose that, for some $s_{k},r_{k}\in \mathbb{N}$, $\varphi _{k}\in
K_{r_{k},s_{k}}$, $\delta _{k}\in M_{s_{k},s}\left( \mathbb{C}\right) $, and 
$\gamma _{k}\in M_{r_{k},r}\left( \mathbb{C}\right) $, $\gamma _{1}^{\ast
}\varphi _{1}\delta _{1}+\cdots +\gamma _{\ell }^{\ast }\varphi _{\ell
}\delta _{\ell }$ is a proper rectangular convex combination in $\boldsymbol{%
K}$ that equals $\varphi$. Then we have that%
\begin{equation*}
\left( \xi \odot \eta \right) ^{\ast }\tilde{\varphi}\left( \xi \odot \eta
\right) =\left( \gamma _{1}\xi \odot \delta _{1}\eta \right) ^{\ast }\tilde{%
\varphi}_{1}\left( \gamma _{1}\xi \odot \delta _{1}\eta \right) +\cdots
+\left( \gamma _{1}\xi \odot \delta _{1}\eta \right) ^{\ast }\tilde{\varphi}%
_{\ell }\left( \gamma _{1}\xi \odot \delta _{1}\eta \right) \text{.}
\end{equation*}%
Let 
\begin{equation*}
t_{k}=\left\Vert \left( \gamma _{k}\xi \odot \delta _{k}\eta \right)
\right\Vert _{2}
\end{equation*}%
for $k=1,2,\ldots ,\ell $. Observe that%
\begin{equation*}
\sum_{k=1}^{\ell }t_{k}^{2}=\frac{1}{r+s}\sum_{k=1}^{\ell }\left( r\mathrm{Tr%
}\left( \xi ^{\ast }\gamma _{k}^{\ast }\gamma _{k}\xi \right) +s\mathrm{Tr}%
\left( \eta ^{\ast }\delta _{k}^{\ast }\delta _{k}\eta \right) \right) =1%
\text{.}
\end{equation*}%
Therefore, if we set 
\begin{equation*}
\psi _{k}:=t_{k}^{-2}\left( \xi \gamma _{1}\odot \eta \delta _{1}\right)
^{\ast }\tilde{\varphi}_{k}\left( \xi \gamma _{1}\odot \eta \delta
_{1}\right)
\end{equation*}%
for $k=1,2,\ldots ,\ell $, we obtain elements $\psi _{1},\ldots ,\psi _{\ell
}$ of $\Delta _{n,m}$ such that $t_{1}^{2}\psi _{1}+\cdots +t_{\ell
}^{2}\psi _{\ell }=\left( \xi \odot \eta \right) ^{\ast }\tilde{\varphi}%
\left( \xi \odot \eta \right) $. Since by assumption $\left( \xi \odot \eta
\right) ^{\ast }\tilde{\varphi}\left( \xi \odot \eta \right) $ is an extreme
point of $\Delta _{n,m}$, we can conclude that $\psi _{k}=\left( \xi \odot
\eta \right) ^{\ast }\tilde{\varphi}\left( \xi \odot \eta \right) $ for $%
k=1,2,\ldots ,\ell $. The fact that $\xi $ and $\eta $ are right invertible
now easily implies that $r_{k}=r$, $s_{k}=s$, $\gamma _{i}^{\ast }\gamma
_{i}=t_{i}^{2}1$, $\delta _{i}^{\ast }\delta _{i}=t_{i}^{2}1$, and $\gamma
_{i}^{\ast }\varphi _{i}\delta _{i}=t_{i}^{2}\varphi $. This conclude the
proof that $\varphi $ is a rectangular extreme point of $\boldsymbol{K}$.

We are now ready to conclude the proof that $\boldsymbol{K}$ is the
rectangular convex hull of $\partial _{\rho }\boldsymbol{K}$. In view of the
rectangular bipolar theorem, it is enough to prove that if $n,m\in \mathbb{N}
$ and $z\in M_{n,m}(X)$ are such that $\left\Vert \varphi
^{(n,m)}(z)\right\Vert \leq 1$ for every $r\leq n$, $s\leq m$, and $\varphi
\in \partial _{\rho }K_{s,t}$, then $\left\Vert \psi ^{(n,m)}(z)\right\Vert
\leq 1$ for every $\psi \in K_{n,m}$. If $x\in M_{n,m}(X)$ then we let $%
\tilde{x}$ be the element%
\begin{equation*}
\begin{bmatrix}
I^{\oplus n} & x \\ 
x^{\ast } & I^{\oplus m}%
\end{bmatrix}%
\end{equation*}%
of $\tilde{X}$. Observe that if $\varphi \in M_{r,s}(X^{\prime })$ and $x\in
M_{n,m}(X)$, then%
\begin{equation*}
\tilde{\varphi}^{(n,m)}(x)=%
\begin{bmatrix}
I_{nr} & \varphi ^{(n,m)}(x) \\ 
\varphi ^{(n,m)}(x)^{\ast } & I_{ms}%
\end{bmatrix}%
\in M_{nr+ms}\left( \mathbb{C}\right) \text{.}
\end{equation*}%
If furthermore $\xi \in M_{r,n}\left( \mathbb{C}\right) $ and $\eta \in
M_{s,m}\left( \mathbb{C}\right) $ then%
\begin{equation*}
\left( \xi \odot \eta \right) ^{\ast }\tilde{\varphi}(x)\left( \xi \odot
\eta \right) =%
\begin{bmatrix}
I_{n}\otimes \xi ^{\ast }\xi & \left( \xi ^{\ast }\varphi \eta \right)
^{(n,m)}(x) \\ 
\left( \xi ^{\ast }\varphi \eta \right) ^{(n,m)}(x)^{\ast } & I_{m}\otimes
\eta ^{\ast }\eta%
\end{bmatrix}%
\text{.}
\end{equation*}%
Let now $\left( \xi \odot \eta \right) ^{\ast }\tilde{\varphi}\left( \xi
\odot \eta \right) $ be an extreme point of $\Delta _{n,m}$, where $\xi \in
M_{r,n}\left( \mathbb{C}\right) $ and $\eta \in M_{s,m}\left( \mathbb{C}%
\right) $ are right invertible and such that $\left\Vert \xi \right\Vert
_{2}=\left\Vert \eta \right\Vert _{2}=1$, and $\varphi \in \partial K_{r,s}$%
. By assumption we have that $\left\Vert \left( id_{M_{n,m}\left( \mathbb{C}%
\right) }\otimes \varphi \right) (z)\right\Vert \leq 1$. Thus by \cite[Lemma
3.1]{paulsen_completely_2002} we have%
\begin{equation*}
\begin{bmatrix}
I_{nr} & \varphi ^{(n,m)}(z) \\ 
\varphi ^{(n,m)}(z)^{\ast } & I_{ms}%
\end{bmatrix}%
\geq 0
\end{equation*}%
and hence%
\begin{equation*}
\left( \xi \odot \eta \right) ^{\ast }\tilde{\varphi}(z)\left( \xi \odot
\eta \right) =%
\begin{bmatrix}
I_{n}\otimes \xi ^{\ast }\xi & \left( \xi ^{\ast }\varphi \eta \right)
^{(n,m)}(z) \\ 
\left( \xi ^{\ast }\varphi \eta \right) ^{(n,m)}(z)^{\ast } & I_{m}\otimes
\eta ^{\ast }\eta%
\end{bmatrix}%
\geq 0\text{.}
\end{equation*}%
It follows from this and the classical Krein-Milman theorem that $\psi
(z)\geq 0$ for any $\psi \in \Delta _{n,m}$. Let us fix $\varphi \in K_{n,m}$%
. If $\xi =I_{n}$ and $\eta =I_{m}$ then $\left( \xi \odot \eta \right)
^{\ast }\tilde{\varphi}\left( \xi \odot \eta \right) \in \Delta _{n,m}$ and%
\begin{equation*}
\left( \xi \odot \eta \right) ^{\ast }\tilde{\varphi}(z)\left( \xi \odot
\eta \right) =%
\begin{bmatrix}
I_{n}\otimes \xi ^{\ast }\xi & \left( \xi ^{\ast }\varphi \eta \right)
^{(n,m)}(z) \\ 
\left( \xi ^{\ast }\varphi \eta \right) ^{(n,m)}(z)^{\ast } & I_{m}\otimes
\eta ^{\ast }\eta%
\end{bmatrix}%
=%
\begin{bmatrix}
I_{nr} & \left( \varphi ^{(n,m)}\right) (z) \\ 
\varphi ^{(n,m)}(z)^{\ast } & I_{ms}%
\end{bmatrix}%
\geq 0\text{.}
\end{equation*}%
This implies again by \cite[Lemma 3.1]{paulsen_completely_2002} that $%
\left\Vert \varphi ^{(n,m)}(z)\right\Vert \leq 1$. Since this is valid for
an arbitrary element of $K_{n,m}$, the proof is concluded.
\end{proof}

\begin{remark}
\label{Remark:krein-milman}The proof of Theorem \ref{Theorem:Krein-Milman}
shows something more precise: if $\boldsymbol{K}$ is a rectangular convex
set, then for every $n,m\in \mathbb{N}$, $K_{n,m}$ is equal to the closed
rectangular convex hull of $K_{r,s}$ for $r\leq n$ and $s\leq m$.
\end{remark}

The following is an immediate corollary of the rectangular Krein-Milman
theorem as formulated in Remark \ref{Remark:krein-milman}.

\begin{corollary}
\label{Corollary:krein-milman}Suppose that $X$ is an operator space, and $%
\boldsymbol{K}=\mathrm{C\mathrm{\mathrm{Ball}}}(X^{\prime })$ is the
corresponding compact rectangular matrix convex set. If $n,m\in \mathbb{N}$
and $x\in M_{n,m}(X)$, then $\left\Vert x\right\Vert $ is the supremum of $%
\left\Vert \varphi ^{(n,m)}(x)\right\Vert $ where $\varphi $ ranges among
all the rectangular extreme points of $K_{r,s}$ for $r\leq n$ and $s\leq m$.
\end{corollary}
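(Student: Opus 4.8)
The plan is to read off the identity from two facts already in play: the way the operator space norm is recovered from $\boldsymbol{K}=\mathrm{C\mathrm{Ball}}(X')$, and the sharpened form of Krein--Milman in Remark \ref{Remark:krein-milman}. Fix $n,m\in\mathbb N$ and $x\in M_{n,m}(X)$, and set
\begin{equation*}
c:=\sup\{\|\varphi^{(n,m)}(x)\|:\varphi\in\partial_\rho K_{r,s},\ r\le n,\ s\le m\}.
\end{equation*}
I would prove $c\le\|x\|$ and $\|x\|\le c$ separately, the first being elementary and the second being where the Remark enters.

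The inequality $c\le\|x\|$ is immediate: each $\varphi\in K_{r,s}$ is completely contractive, so $\|\varphi^{(n,m)}(x)\|=\|\langle\langle x,\varphi\rangle\rangle\|\le\|x\|$, and since $\partial_\rho K_{r,s}\subseteq K_{r,s}$ the supremum defining $c$ is bounded by $\|x\|$. For the reverse inequality I would use the norm formula
\begin{equation*}
\|x\|=\sup_{\psi\in K_{n,m}}\|\psi^{(n,m)}(x)\|,
\end{equation*}
which is exactly the equivalence established at the end of the proof of Theorem \ref{Theorem:Krein-Milman} (it follows from the rectangular bipolar theorem of Subsection \ref{Subsection:polar} together with the reduction that membership at level $(n,m)$ in a polar need only be tested against elements at level $(n,m)$). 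In view of this formula it suffices to show $\|\psi^{(n,m)}(x)\|\le c$ for every $\psi\in K_{n,m}$.

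The key point is that $\psi\mapsto\|\psi^{(n,m)}(x)\|$ does not increase under rectangular matrix convex combinations. From the definition of $\langle\langle\cdot,\cdot\rangle\rangle$ one checks the identity
\begin{equation*}
\langle\langle x,\alpha^\ast v\beta\rangle\rangle=(I_n\otimes\alpha)^\ast\langle\langle x,v\rangle\rangle(I_m\otimes\beta)
\end{equation*}
for scalar matrices $\alpha,\beta$ of the appropriate sizes. Thus, if $\psi=\alpha_1^\ast v_1\beta_1+\cdots+\alpha_\ell^\ast v_\ell\beta_\ell$ is a rectangular matrix convex combination, then $\psi^{(n,m)}(x)=\sum_i(I_n\otimes\alpha_i)^\ast v_i^{(n,m)}(x)(I_m\otimes\beta_i)$, and because $\sum_i(I_n\otimes\alpha_i)^\ast(I_n\otimes\alpha_i)=I_n\otimes\sum_i\alpha_i^\ast\alpha_i=I_{n^2}$ and likewise $\sum_i(I_m\otimes\beta_i)^\ast(I_m\otimes\beta_i)=I_{m^2}$, the standard row--column factorization estimate yields $\|\psi^{(n,m)}(x)\|\le\max_{1\le i\le\ell}\|v_i^{(n,m)}(x)\|$.

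Finally I would invoke Remark \ref{Remark:krein-milman}, by which $K_{n,m}$ is the closed rectangular matrix convex hull of $\bigcup_{r\le n,\ s\le m}\partial_\rho K_{r,s}$. Given $\psi\in K_{n,m}$, approximate it in the weak topology by finite rectangular matrix convex combinations of points $v_i\in\partial_\rho K_{r_i,s_i}$ with $r_i\le n$ and $s_i\le m$; the estimate of the previous paragraph bounds each approximant's value under $\|\cdot^{(n,m)}(x)\|$ by $c$, and since $\psi\mapsto\langle\langle x,\psi\rangle\rangle$ is weakly continuous (its entries are the fixed functionals coming from the entries of $x$ evaluated at the entries of $\psi$, landing in the finite-dimensional space $M_{n^2,m^2}(\mathbb C)$) the bound $\|\psi^{(n,m)}(x)\|\le c$ passes to the limit. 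Combining with the norm formula gives $\|x\|\le c$, hence $\|x\|=c$. The only delicate points are the index bookkeeping in the displayed identity for $\langle\langle x,\alpha^\ast v\beta\rangle\rangle$ and the verification that the approximants genuinely sit at matrix level $(n,m)$ with coefficients summing to the identity; both are routine once the conventions of Section \ref{Sec:rectangular} are in force, so I anticipate no serious obstacle.
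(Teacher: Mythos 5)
Your proof is correct and takes essentially the paper's route: the paper deduces this corollary immediately from Remark \ref{Remark:krein-milman} combined with the (implicit) duality fact $\left\Vert x\right\Vert =\sup_{\psi \in K_{n,m}}\left\Vert \psi ^{(n,m)}(x)\right\Vert$, obtained from the rectangular bipolar theorem and the level-$(n,m)$ reduction, which is exactly the combination you use. Your verification that $\psi \mapsto \left\Vert \psi ^{(n,m)}(x)\right\Vert$ is dominated by its values at the $v_{i}$ under rectangular matrix convex combinations and passes to weak limits simply spells out why the deduction from the Remark is ``immediate.''
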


\section{Boundary representations and the C*-envelope of a matrix-gauged
space\label{Section:gauge}}

\subsection{Selfadjoint operator spaces}

By a concrete\emph{\ selfadjoint} \emph{operator space }we mean a closed
selfadjoint subspace $X$ of $B(H)$. Any selfadjoint operator space is
endowed with a canonical involution, matrix norms, and matrix positive cones
inherited from $B(H)$.

An (abstract) matrix-ordered matrix-normed $\ast $-vector space---see \cite[%
Subsection 3.1]{russell_characterizations_2015}---is a vector space $V$
endowed with

\begin{itemize}
\item a conjugate-linear involution $v\mapsto v^{\ast }$,

\item a complete norm in $M_{n}\left( V\right) $ for every $n\in \mathbb{N}$,

\item a distinguished positive cone $M_{n}\left( V\right) _{+}\subset
M_{n}\left( V\right) $
\end{itemize}

such that, for every $n,k\in \mathbb{N}$, $x\in M_{n}(X)$, and $a,b\in
M_{n,k}\left( \mathbb{C}\right) $,

\begin{enumerate}
\item $M_{n}\left( V\right) _{+}$ is proper, i.e. $M_{n}\left( V\right)
_{+}\cap (-M_{n}\left( V\right) _{+})=\left\{ 0\right\} $,

\item $M_{n}\left( V\right) _{+}$ is closed in the topology induced by the
norm,

\item $\left\Vert a^{\ast }xb\right\Vert \leq \left\Vert a\right\Vert
\left\Vert x\right\Vert \left\Vert b\right\Vert $, and

\item when $a^{\ast }xa\in M_{k}\left( V\right)_+ $, when $x\in M_{n}\left( V\right)_+$.
\end{enumerate}

A matrix-ordered matrix-normed $\ast $-vector space $V$ is \emph{normal }if,
for every $n\in \mathbb{N}$ and $x,y,z\in M_{n}\left( V\right) $, $x\leq
y\leq z$ implies that $\left\Vert y\right\Vert \leq \max \left\{ \left\Vert
x\right\Vert ,\left\Vert z\right\Vert \right\} $. It is essentially proved
in \cite{werner_subspaces_2002}---see also \cite[Theorem 3.2]%
{russell_characterizations_2015} and \cite[Theorem 5.6]%
{russell_characterizations_2016}---that a matrix-ordered matrix-normed $\ast 
$-vector space $V$ is normal if and only if there exists a completely
positive completely isometric selfadjoint linear map $\phi :V\rightarrow
B(H) $, where $H$ is a Hilbert space and the space $B(H)$ of bounded linear
operators on $H$ is endowed with its canonical matrix-ordered matrix-normed $%
\ast $-vector space structure.

\subsection{Matrix-gauged spaces}

Suppose that $V$ is a real vector space. A \emph{gauge }over $V$ is a
subadditive and positively-homogeneous function $\nu :V\rightarrow \lbrack
0,+\infty )$. The conjugate gauge $\overline{\nu }$ is defined by $\overline{%
\nu }(x)=\nu (-x)$. The seminorm $\left\Vert x\right\Vert _{\nu }$
corresponding to a gauge is given by $\left\Vert x\right\Vert _{\nu }=\max
\left\{ \nu (x),\overline{\nu }(x)\right\} $. A gauge is proper if the
seminorm $\left\Vert \cdot \right\Vert _{\nu }$ is a norm. The positive cone
associated with a gauge $\nu $ is the set $V_{+,\nu }=\left\{ x\in V:%
\overline{\nu }(x)=0\right\} $.

The following notion is considered in \cite{russell_characterizations_2015}
under the name of $L^{\infty }$-matricially ordered vector space.

\begin{definition}
A \emph{matrix-gauged} space is a $\ast $-vector space $V$ endowed with a
sequence of proper gauges $\nu _{n}:M_{n}\left( V\right) _{sa}\rightarrow
\lbrack 0,+\infty )$ for $n\in \mathbb{N}$ with the property that, for every 
$n,k\in \mathbb{N}$, $x\in M_{n}\left( V\right) $, $y\in M_{k}\left(
V\right) $, and $a\in M_{n,k}\left( \mathbb{C}\right) $, one has that%
\begin{equation*}
\nu _{k}\left( a^{\ast }xa\right) \leq \left\Vert a\right\Vert ^{2}\nu
_{n}(x)
\end{equation*}%
and%
\begin{equation*}
\nu _{n+k}\left( x\oplus y\right) =\max \left\{ \nu _{n}(x),\nu
_{k}(y)\right\} \text{.}
\end{equation*}
\end{definition}

A linear map $\phi :V\rightarrow W$ between matrix-gauged spaces is
completely gauge-contractive if it is selfadjoint and $\nu \left( \phi
^{(n)}(x)\right) \leq \nu (x)$ for every $n\in \mathbb{N}$ and $x\in
M_{n}(X)_{sa}$, and completely gauge-isometric if it is selfadjoint and $\nu
\left( \phi ^{(n)}(x)\right) =\nu (x)$ for every $n\in \mathbb{N}$ and $x\in
M_{n}(X)_{sa}$; see \cite[Definition 3.11]{russell_characterizations_2015}.
Matrix-gauged spaces naturally form a category, where the morphisms are the
completely gauge-contractive maps, and isomorphism are completely
gauge-isometric surjective maps. In the following we will consider
matrix-gauged spaces as objects in this category. By \cite[Corollary 3.10]%
{russell_characterizations_2015}, every matrix-gauged space is completely
gauge-isometrically isomorphic to a concrete selfadjoint operator space.

Any matrix-gauged space $V$ has a canonical \emph{normal} matrix-ordered
matrix-normed $\ast $-vector space structure, obtained by considering the
gauge norms and the gauge cones associated with the given matrix-gauges.
Conversely, suppose that $V$ is a normal matrix-ordered matrix-normed $\ast $%
-vector space. Letting $\nu _{n}(x)$ be the distance of $x$ from $%
-M_{n}\left( V\right) _{+}$ for every $x\in M_{n}\left( V\right) _{sa}$
defines a canonical matrix-gauged structure on $X$. This matrix-gauge
structure induces the original matrix-order and matrix-norms on $V$ that one
started from; see \cite[Proposition 3.5]{russell_characterizations_2015}.
Furthermore a selfadjoint linear map $\phi :V\rightarrow B(H)$ is completely
positive and completely contractive if and only if it is completely
gauge-contractive with respect to these specific matrix-gauges. However,
there might be different matrix-gauges on $V$ that induce the same
matrix-order and matrix-norms on $V$.

Suppose now that $S$ is an operator system with order unit $1$. Then, in
particular, $S$ is a normal matrix-ordered matrix-normed $\ast $-vector
space. Furthermore, it admits a unique matrix-gauge structure compatible
with its matrix-order and matrix-norms. These matrix-gauges are defined by $%
\nu _{n}(x)=\inf \left\{ t>0:x\leq t1\right\} $ for a selfadjoint $x\in S$.
Uniqueness can be deduced from Arveson's extension theorem \cite[Theorem
1.6.1]{brown_c*-algebras_2008}, as proved in \cite[Theorem 6.9]%
{russell_characterizations_2016}. In the following we will regard an
operator system as a matrix-gauge space with such canonical matrix-gauges. A
unital selfadjoint linear map between operator systems is completely
positive if and only if it is completely gauge-contractive, and completely
isometric if and only if it is completely gauge-isometric.

It is proved in \cite[Subsection 3.3]{russell_characterizations_2015} that
any matrix-gauged space $W$ admits a completely gauge isometric embedding as
a subspace of codimension $1$ into an operator system $W^{\dag }$, called
the unitization of $W$, that satisfies the following universal property: any
completely gauge-contractive map from $W$ to an operator system $V$ admits a
unique extension to a unital completely positive map from $W^{\dag }$ to $V$%
. The unitization $W^{\dag }$ of $W$ is uniquely characterized by the above.
If $W$ is a normal matrix-ordered matrix-normed space, then we define the
unitization of $W$ to be the unitization of $W$ endowed with the canonical
matrix-gauges described above.

Suppose that $A$ is a (not necessarily unital) C*-algebra. Then $A$ is
endowed with canonical matrix-gauges, obtained by setting $\nu
_{n}(x)=\left\Vert x_{+}\right\Vert $ for a selfadjoint $x\in A$, where $%
x_{+}$ denotes the positive part of $x$. In the following we will consider a
C*-algebra as a matrix-gauged space with these canonical matrix-gauges. It
follows from the unitization construction that any matrix-gauged space
admits a completely gauge-isometric embedding into $B(H)$. The following
result can also be found in \cite[Proposition 2.2.1]{brown_c*-algebras_2008}
with a different proof.

\begin{lemma}
\label{Lemma:unitize-algebra}Suppose that $A\subset B(H)$ is a C*-algebra
such that the identity $1$ of $B(H)$ does not belong to $A$. Let $Y$ be an
operator system. Then for any completely positive completely contractive map 
$\phi :A\rightarrow Y$ there exists a unital completely positive map $\psi :%
\mathrm{span}\left\{ A,1\right\} \rightarrow Y$ extending $\phi $.
\end{lemma}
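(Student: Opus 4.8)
The plan is to show that the \emph{only} candidate extension is completely positive. Since $1\notin A$, every element of $\mathrm{span}\{A,1\}$ is uniquely of the form $a+\lambda 1$ with $a\in A$ and $\lambda\in\mathbb{C}$, so the sole unital self-adjoint map extending $\phi$ is $\psi(a+\lambda 1)=\phi(a)+\lambda 1_Y$; the whole content of the lemma is that this $\psi$ is completely positive. Realizing $Y$ unitally and completely order isomorphically inside some $B(K)$ (so that $1_Y=1_K$), it suffices to prove that $\psi$ is completely positive as a map into $B(K)$, since a unital self-adjoint map with range contained in $Y$ is completely positive into $Y$ if and only if it is completely positive into $B(K)$.

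First I would invoke Stinespring's dilation theorem for the completely positive map $\phi:A\rightarrow B(K)$: there are a Hilbert space $L$, a nondegenerate $\ast$-representation $\pi:A\rightarrow B(L)$, and a bounded operator $V:K\rightarrow L$ with $\phi(a)=V^{\ast}\pi(a)V$. Using an approximate unit $(e_i)$ of $A$ and nondegeneracy of $\pi$, one has $\pi(e_i)\rightarrow 1_L$ strongly, hence $\phi(e_i)=V^{\ast}\pi(e_i)V\rightarrow V^{\ast}V$ strongly; since $\phi$ is contractive, each $\phi(e_i)$ is positive of norm at most $1$, so $b:=V^{\ast}V$ satisfies $0\le b\le 1_K$. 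As $1\notin A$, the algebra $\mathrm{span}\{A,1\}$ is the unitization of $A$, with $A$ sitting as a codimension-one ideal; let $q:\mathrm{span}\{A,1\}\rightarrow\mathbb{C}$ be the unital $\ast$-homomorphism with kernel $A$, and extend $\pi$ to a unital representation $\tilde\pi$ of $\mathrm{span}\{A,1\}$ with $\tilde\pi(1)=1_L$. Then I would set
\begin{equation*}
\psi(c)=V^{\ast}\tilde\pi(c)V+q(c)\,(1_K-b),
\end{equation*}
check that the right-hand side equals $\phi(a)+\lambda 1_Y$ when $c=a+\lambda 1$, and observe that it is completely positive: the first summand is a compression of a $\ast$-representation, while the second is the composition of the $\ast$-homomorphism $q$ with the completely positive map $\mathbb{C}\rightarrow B(K)$, $\mu\mapsto\mu(1_K-b)$, which is positive since $1_K-b\ge 0$.

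The hard point---and the reason a more direct argument fails---is the treatment of the unit. A naive attempt to verify complete positivity by sandwiching a positive $X=[a_{ij}+\lambda_{ij}1]\in M_n(\mathrm{span}\{A,1\})$ between the approximate units $E_i=\mathrm{diag}(e_i,\dots,e_i)$ and applying $\phi^{(n)}$ yields only, in the limit, $[\phi(a_{ij})]+[\lambda_{ij}]\otimes b\ge 0$ with $b\le 1_K$; since $[\lambda_{ij}]$ need not be positive, this is strictly weaker than the desired $[\phi(a_{ij})]+[\lambda_{ij}]\otimes 1_Y\ge 0$, and simple examples show the weaker inequality can hold with room to spare. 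The role of the Stinespring computation is precisely to supply the missing \emph{defect} term $[\lambda_{ij}]\otimes(1_K-b)$ in a manifestly completely positive manner through the character $q$; contractivity of $\phi$ is exactly what guarantees $1_K-b\ge 0$, so that this correction is legitimate. Finally, since $\psi(a+\lambda 1)=\phi(a)+\lambda 1_Y$ visibly lands in $Y$ and $Y\hookrightarrow B(K)$ is a complete order embedding, $\psi$ is the required unital completely positive extension of $\phi$ into $Y$.
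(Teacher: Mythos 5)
Your construction is correct: since $1\notin A$, the decomposition $c=a+\lambda 1$ is unique, $q(a+\lambda 1)=\lambda$ is a unital $\ast$-homomorphism (because $A$ is a closed two-sided ideal of codimension one in $\mathrm{span}\{A,1\}$), $\tilde\pi$ is a genuine unital $\ast$-homomorphism, and your map $\psi(c)=V^{\ast}\tilde\pi(c)V+q(c)(1_K-b)$ is a sum of two manifestly completely positive maps, is unital, and restricts to $\phi$ on $A$. The paper's proof uses the same two ingredients---a Stinespring dilation and the defect inequality $V^{\ast}V\leq 1_K$ coming from contractivity---but organizes them pointwise rather than globally: it fixes $n$, takes a positive $z=x+\alpha 1\in M_n(\mathrm{span}\{A,1\})$, observes that $\alpha\geq 0$ precisely because $1\notin A$ (this is your character $q$, used implicitly), reduces to $\alpha=I_n$ by replacing $z$ with $\alpha^{-1/2}z\alpha^{-1/2}$ (after an approximation to make $\alpha$ invertible), and then verifies $\phi^{(n)}(x)+1\geq v^{(n)\ast}\pi^{(n)}(x+1)v^{(n)}\geq 0$ using $v^{(n)\ast}v^{(n)}\leq 1$. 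Your packaging buys a cleaner statement---complete positivity is visible at once, at all matrix levels, with no normalization step---while the paper's version avoids writing down the extension $\tilde\pi$ and the explicit correction term. These are two presentations of the same mechanism; indeed, at $\alpha=I_n$ the paper's chain of inequalities is exactly your decomposition $\psi^{(n)}(z)=V^{(n)\ast}\tilde\pi^{(n)}(z)V^{(n)}+I_n\otimes(1_K-b)$.

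One point in your discussion is wrong, though it does not affect the proof itself. You claim that in the naive approximate-unit argument the matrix $[\lambda_{ij}]$ ``need not be positive,'' so that the limit inequality $[\phi(a_{ij})]+[\lambda_{ij}]\otimes b\geq 0$ is genuinely weaker than what is needed. In fact, if $[a_{ij}+\lambda_{ij}1]\geq 0$, then applying the $\ast$-homomorphism $q^{(n)}$ gives $[\lambda_{ij}]\geq 0$---this is exactly where the hypothesis $1\notin A$ enters, and it is the first observation in the paper's proof. Granting this, the naive argument closes immediately: since $0\leq b\leq 1_K$,
\begin{equation*}
[\phi(a_{ij})]+[\lambda_{ij}]\otimes 1_K=\bigl([\phi(a_{ij})]+[\lambda_{ij}]\otimes b\bigr)+[\lambda_{ij}]\otimes(1_K-b)\geq 0.
\end{equation*}
So the ``more direct argument'' does not fail; it merely needs the positivity of the scalar part, which is the same fact your formula exploits through $q$. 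Your actual construction is self-contained and correct regardless.
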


\begin{proof}
Let $Z:=\mathrm{span}\left\{ A,1\right\} \subset B(H)$ and assume that $Y
\subset B(L)$ for some Hilbert space $L$. We have to prove that if $%
z=x+\alpha 1\in M_{n}\left( Z\right) $ for $\alpha \in M_{n}\left( \mathbb{C}%
\right) $ is positive, then $\phi ^{(n)}(x)+\alpha 1\in M_{n}(Y)$ is
positive. Since $1\notin A$, we have that $\alpha $ is positive. Without
loss of generality, we can assume that $\alpha $ is invertible. After
replacing $z$ with $\alpha ^{-\frac{1}{2}}z\alpha ^{-\frac{1}{2}}$ we can
assume that $\alpha =1$. By Stinespring's theorem \cite[Theorem II.6.9.7]%
{blackadar_operator_2006}, there exist a Hilbert space $K$, a *-homomorphism 
$\pi :A\rightarrow B(K)$, and a linear map $v:L\rightarrow K$ such that $%
\left\Vert v\right\Vert =1$ and $\phi (x)=v^{\ast }\pi (x)v$ for every $x\in
A$. Observe that $\pi$ extends to a unital $*$-homomorphism from $Z$ into $%
B(L)$, which we still denote by $\pi$. Let $v^{(n)}:H^{(n)}\rightarrow
K^{(n)}$ be the map $v\oplus \cdots \oplus v$. Then we have that 
\begin{eqnarray*}
\phi ^{(n)}(x)+1 &=&v^{(n)\ast }\pi ^{(n)}(x)v^{(n)}+1 \\
&\geq &v^{(n)\ast }\pi ^{(n)}(x)v^{(n)}+v^{(n)\ast }\pi
^{(n)}(1)v^{(n)}=v^{(n)\ast }\pi ^{(n)}\left( x+1\right) v^{(n)}\geq 0\text{.%
}
\end{eqnarray*}%
This concludes the proof.
\end{proof}

It follows from the previous lemma that the unitization of a C*-algebra $A$
as a matrix-gauged space coincides with the unitization of $A$ as a
C*-algebra; see also \cite[Corollary 4.17]{werner_subspaces_2002}.
Furthermore, it follows from Lemma \ref{Lemma:unitize-algebra}, \cite[%
Theorem 6.9]{russell_characterizations_2016}, and Arveson's extension
theorem that a C*-algebra admits unique compatible matrix-gauges. One can
then deduce from \cite[Theorem 3.16]{russell_characterizations_2015} that a
linear map between C*-algebras or operator systems is completely
gauge-contractive if and only if it is completely positive contractive.

\subsection{The injective envelope of a matrix-gauged space}

We say that a matrix-gauged space is\emph{\ injective }if it is injective in
the category of matrix-gauged spaces and completely gauge-contractive maps.
Theorem 3.14 of \cite{russell_characterizations_2015} shows that $B(H)$ is
an injective matrix-gauged space when endowed with its canonical
matrix-gauges. It follows from this that the unitization functor $W\mapsto
W^{\dag }$ is an injective functor from the category of matrix-gauged spaces
and gauge-contractive maps to the category of operator systems and unital
completely positive maps.

Our goal now is to show that any injective matrix-gauged space is
(completely gauge-isometrically isomorphic to) a unital C*-algebra. This is
a generalization of a theorem of Choi and Effros see \cite[Theorem 15.2]%
{paulsen_completely_2002}.

\begin{proposition}
\label{Proposition:CE} Let $X$ be an injective matrix-gauged space. Then $X$
is completely gauge-isometrically isomorphic to a unital C*-algebra.
\end{proposition}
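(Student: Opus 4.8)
The plan is to imitate the proof of the Choi--Effros theorem (Paulsen \cite[Theorem 15.2]{paulsen_completely_2002}), but in the contractive rather than the unital setting. First I would fix a completely gauge-isometric embedding $X\subset B(H)$, which exists by the embedding result recorded above. Since $X$ is injective in the category of matrix-gauged spaces and completely gauge-contractive maps, the identity map $\mathrm{id}_{X}\colon X\to X$ extends along the completely gauge-isometric inclusion $X\hookrightarrow B(H)$ to a completely gauge-contractive map $\phi\colon B(H)\to X$. As the inclusion is completely gauge-isometric, $\phi$ is still completely gauge-contractive (and selfadjoint) when regarded as a map $B(H)\to B(H)$; since both domain and codomain are C*-algebras, the characterization recalled above shows that $\phi$ is completely positive and contractive. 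Because $\phi$ restricts to the identity on $X$ and has range $X$, it is idempotent, so $\phi$ is a completely positive contractive projection of $B(H)$ onto $X$.

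I would then equip $X$ with the Choi--Effros product $a\bullet b:=\phi(ab)$, keeping the involution, matrix norms and matrix order inherited from $B(H)$ (equivalently, the ones coming from the gauges of $X$, as the embedding is gauge-isometric). The goal is to show that $(X,\bullet)$ is a \emph{unital} C*-algebra whose induced matrix-gauged structure is the original one, which forces $X$ to be completely gauge-isometrically isomorphic to a unital C*-algebra. The single point where the classical argument must be modified is that $\phi$ is only contractive, so $\phi(1)$ need not equal $1$ and the unit of $(X,\bullet)$ must be produced by hand; this is the main obstacle.

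To overcome it, set $e:=\phi(1)\in X$ and note that $\phi(e)=\phi(\phi(1))=e$ by idempotency, so $\phi(1-e)=0$. For any $x\in B(H)$, $2$-positivity of $\phi$ (with $b=(1-e)^{1/2}$ and $c=(1-e)^{1/2}x$) gives
\begin{equation*}
\begin{pmatrix}
\phi(1-e) & \phi((1-e)x) \\
\phi(x^{*}(1-e)) & \phi(x^{*}(1-e)x)
\end{pmatrix}
\geq 0 ,
\end{equation*}
and a positive semidefinite matrix with a vanishing diagonal entry has the corresponding off-diagonal entries zero; hence $\phi((1-e)x)=0$, i.e. $\phi(ex)=\phi(x)$ for all $x$, and symmetrically $\phi(xe)=\phi(x)$. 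In particular $e\bullet a=\phi(ea)=\phi(a)=a$ and $a\bullet e=a$ for every $a\in X$, so $e$ is a two-sided unit for $\bullet$. With the unit in hand, the remaining Choi--Effros identities---the bimodule relations $\phi(a\phi(x))=\phi(ax)$ and $\phi(\phi(x)a)=\phi(xa)$ for $a\in X$, associativity of $\bullet$, and the C*-identity for the inherited norm---go through exactly as in the unital case, the only analytic input being the Kadison--Schwarz inequality $\phi(y)^{*}\phi(y)\leq\phi(y^{*}y)$, which holds for every completely positive contraction. Carrying out these computations at each matrix level (using that every $\phi^{(n)}$ is a completely positive contractive projection of $M_{n}(B(H))$ onto $M_{n}(X)$) shows that the C*-matrix order and C*-matrix norm of $(X,\bullet)$ coincide with the inherited matrix-gauged structure of $X$, and hence $X$ is completely gauge-isometrically isomorphic to the unital C*-algebra $(X,\bullet)$, completing the proof. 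I note in passing that one could instead first deduce from the preceding remarks that the unitization $X^{\dag}$ is an injective operator system and apply the operator-system Choi--Effros theorem to $X^{\dag}$, but recovering $X$ itself from $X^{\dag}$ still requires the contractive idempotent argument above, so the direct route is the cleaner one.
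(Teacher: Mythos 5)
Your proposal follows the same route as the paper's own proof: injectivity yields a completely positive contractive projection $\phi\colon B(H)\to X$, one introduces the Choi--Effros product, observes that the only analytic input in Paulsen's argument is the Schwarz inequality, which survives for completely positive contractions, and identifies $e=\phi(I_H)$ as the unit. Your production of the unit via the $2$-positivity argument (from $\phi(1-e)=0$ one forces $\phi((1-e)x)=0$) is a pleasant self-contained variant of the paper's derivation, which instead obtains $e\bullet x=\phi(\phi(I_H)x)=\phi(x)=x$ directly from the bimodule identities; both are correct, and up to this point your argument is sound.

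The last step, however, has a genuine gap. You pass from ``the matrix order and matrix norms of $(X,\bullet)$ coincide with those inherited from $B(H)$'' to ``hence the identity map is a complete \emph{gauge} isometry''. This inference is not automatic: as the paper itself emphasizes, a normal matrix-ordered matrix-normed space may carry several distinct matrix-gauge structures inducing the same matrix order and matrix norms, so an order-and-norm isomorphism does not by itself give a gauge isometry. Concretely, for selfadjoint $x\in M_n(X)$ the inherited gauge is $\inf\{t>0: x\le t\,(I_{\mathbb{C}^n}\otimes I_H)\}$ computed in $M_n(B(H))$, while the C*-gauge of $(X,\bullet)$ is $\inf\{t>0: x\le t\,e^{(n)}\}$ computed in $(M_n(X),\bullet)$, and since $e\ne I_H$ in general, equality of these two quantities requires an argument. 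The paper closes this with a short two-sided check: $\phi$ is a \emph{unital} completely positive map from $B(H)$ onto the C*-algebra $(X,\bullet)$, hence completely gauge contractive, which gives one inequality; conversely, $x\le t\,e^{(n)}$ in $(M_n(X),\bullet)$ implies $x\le t\,(I_{\mathbb{C}^n}\otimes I_H)$ in $M_n(B(H))$, because the orders agree and $e\le I_H$, which gives the other. Alternatively, you could invoke the paper's observation that a C*-algebra admits a unique matrix-gauge structure compatible with its matrix order and norms (itself a nontrivial fact resting on Arveson's extension theorem). With either repair your proof is complete.
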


\begin{proof}
We may assume that $X\subset B(H)$ is concretely represented as a
selfadjoint operator space. Since $X$ is injective, there exists a
gauge-contractive and hence completely contractive and completely positive
projection $\Phi :B(H)\rightarrow X$. We define the Choi-Effros product on $%
X $ by 
\begin{equation*}
x\cdot _{\Phi }y=\Phi (xy).
\end{equation*}%
As in the proof of \cite[Theorem 15.2]{paulsen_completely_2002}, one shows
that 
\begin{equation*}
\Phi (\Phi (a)x)=\Phi (ax)\quad \text{ and }\quad \Phi (x\Phi (a))=\Phi (xa)
\end{equation*}%
holds for all $x\in X$ and all $a\in B(H)$. Indeed, the proof only requires
the Schwarz inequality for unital completely positive maps, which remains
valid for completely positive completely contractive maps. In particular, we
see that $e:=\Phi (I_{H})\in X$ is a unit for the Choi-Effros product.
Moreover, the proof of \cite[Theorem 15.2]{paulsen_completely_2002} shows
that $(X,\cdot _{\Phi })$, endowed with the norm and involution of $B(H)$,
is a C*-algebra.

It is clear from the above that the identity map from $X$ onto $(X,\cdot
_{\Phi })$ is an isometry. To see that it is an order isomorphism, suppose
that $x\in X$ is positive with respect to the order on $B(H)$. Then $%
||cI_{H}-x||\leq c$ for all $c\geq ||x||$, so since $\Phi $ is contractive $%
||ce-x||\leq c$ for all $c\geq ||x||$, thus $x$ is a positive element of the
C*-algebra $(X,\cdot _{\Phi })$. Conversely, if $x$ is positive in the
C*-algebra $(X,\cdot _{\Phi })$, then there exists $y\in X$ such that $%
x=y^{\ast }\cdot _{\Phi }y$, hence $x=\Phi (y^{\ast }y)$ is positive in $%
B(H) $. Moreover, the argument at the end of the proof of \cite[Theorem 15.2]%
{paulsen_completely_2002} shows that $M_{n}(X)$, endowed with the
Choi-Effros product $\cdot _{\Phi ^{(n)}}$, is the C*-tensor product of $%
(X,\cdot _{\Phi })$ with $M_{n}(\mathbb{C})$. By the above, the identity map
is an isometry and an order isomorphism between $M_{n}(X)\subset M_{n}(B(H))$
and $(M_{n}(X),\cdot _{\Phi })$. Therefore, the identity map from $X$ onto $%
(X,\cdot _{\Phi ^{(n)}})$ is a selfadjoint complete isometry and complete
order isomorphism.

To see that the identity map from $X$ onto $(X,\cdot _{\Phi })$ is in fact a
complete gauge isometry, observe that $\Phi $ is a unital completely
positive map from $B(H)$ onto $(X,\cdot _{\Phi })$ by the preceding
paragraph, so it is completely gauge contractive. Conversely, if $x\in
M_{n}(X)$ is self-adjoint and satisfies $||x_{+}||\leq 1$, where the
positive part is taken in the C*-algebra $(M_{n}(X),\cdot _{\Phi })$, then $%
x\leq e^{(n)}$ in $(M_{n}(X),\cdot _{\Phi })$, hence $x\leq I_{\mathbb{C}%
^{n}}\otimes I_{H}$ in $M_{n}(B(H))$ by the preceding paragraph, so that the
identity map from $(X,\cdot _{\Phi })$ to $X$ is completely gauge
contractive as well.
\end{proof}

In particular, we see that every injective matrix-gauged space is
(completely gauge-isometrically isomorphic to) an injective operator system.
Conversely, since the unitization functor is injective, every operator
system that is injective in the category of operator systems and unital
completely positive maps is also injective as a matrix-gauged space, when
endowed with the unique compatible matrix-gauge structure.

The usual proof of the existence of the injective envelope of an operator
system yields the existence of a gauge analog of Hamana's injective envelope
of operator spaces. Let us say that a gauge-extension of a matrix-gauged
space $X$ is a pair $\left( Y,i\right) $ where $Y$ is a matrix-gauged space
and $i:X\rightarrow Y$ is a completely gauge-isometric map. As in the case
of operator systems, we say that such a gauge-extension is:

\begin{enumerate}
\item \emph{rigid} if the identity map of $Y$ is the unique
gauge-contractive map $\phi :Y\rightarrow Y$ such that $\phi \circ i=i$;

\item \emph{essential }if whenever $u:Y\rightarrow Z$ is a gauge-contractive
map to a matrix-gauged space $Z$ such that $u\circ i$ is a completely
gauge-isometric, then $u$ is a completely gauge-isometric;

\item an \emph{injective envelope }if $Y$ is injective, and there is no
proper injective subspace of $Y$ that contains $X$.
\end{enumerate}

The same proof as \cite[Lemma 4.2.4]{blecher_operator_2004} shows that if $X$
is a matrix-gauged space, and $\left( Y,i\right) $ is a gauge-extension of $%
X $ such that $Y$ is injective, then the following assertions are
equivalent: 1) $\left( Y,i\right) $ is an injective envelope of $X$; 2) $%
\left( Y,i\right) $ is essential; 3) $(Y,i)$ is rigid. To this purpose one
can consider the gauge analog of the notion of projections and seminorms
from \cite[Subsection 4.2.1]{blecher_operator_2004}.

Suppose that $W$ is a matrix-gauged space, and $X$ is a selfadjoint subspace
of $W$. A completely gauge-contractive $X$-projection on $W$ is an
idempotent completely gauge-contractive map $u:W\rightarrow W$ that
restricts to the identity on $X$. A gauge $X$-seminorm on $W$ is a seminorm
of the form $p(x)=\left\Vert u(x)\right\Vert $ for some completely
gauge-contractive $X$-projection $u$ on $W$. One can define an order on
completely gauge-contractive $X$-projections by $u\leq v$ if and only if $%
u\circ v=v\circ u=u$, while gauge $X$-seminorms are ordered by pointwise
comparison. The same proof as \cite[Lemma 4.2.2]{blecher_operator_2004}
shows that any gauge $X$-seminorm majorizes a minimal gauge $X$-seminorm,
and if $p$ is a minimal gauge $X$-seminorm and $u:W\rightarrow W$ is a
completely gauge-contractive map that restricts to the identity on $X$, then 
$u$ is a minimal gauge $X$-projection. To this purpose, it is enough to
observe that the set of completely gauge-contractive selfadjoint maps from $%
W $ to $B(H)$ is closed in the weak* topology of the space of $\mathrm{CB}%
\left( W,B(H)\right) $ of completely bounded maps from $W$ to $B(H)$. Indeed 
$\phi :W\rightarrow B(H)$ is completely gauge-contractive if and only if it
is selfadjoint and $\left\langle \phi ^{(n)}(x)\xi ,\xi \right\rangle \leq
\nu (x)$ for every $n\in \mathbb{N}$, $\xi \in H^{\oplus n}$, and $x\in
M_{n}(W)_{sa}$.

The proof of \cite[Lemma 4.2.4]{blecher_operator_2004} can now be easily
adapted to prove the claim above, by replacing $X$-projections with gauge $X$%
-projections and $X$-seminorms with gauge $X$-seminorms. Similarly the same
proof as \cite[Theorem 4.2.6]{blecher_operator_2004} shows that if a
matrix-gauged space $X$ is contained in an injective matrix-gauged space $W$%
, then there exists an injective envelope $X\subset Z\subset W$. Furthermore
the injective envelope of $X$ is essentially unique. We denote by $I(X)$ the
injective envelope of a matrix-gauged space $X$, and we identify $X$ with a
selfadjoint subspace of $I(X)$. It is clear that, when $X$ is an operator
system endowed with its canonical matrix-gauges, the injective envelope of $%
X $ as a matrix-gauged space coincides with the injective envelope of $X$ as
an operator system (endowed with the canonical matrix-gauges). Furthermore,
it is a consequence of Proposition \ref{Proposition:CE} that the unitization
of a matrix-gauged space $X$ is $\mathrm{span}\left\{ X,1\right\} \subset
I(X)$, where $1$ denotes the identity of the unital C*-algebra $I(X)$.

\subsection{Boundary representations}

Most fundamental notions in dilation theory admit straightforward versions
in the setting of matrix-gauged spaces. Suppose that $X$ is a matrix-gauged
space. An \emph{operator} \emph{state }on $X$ is a completely
gauge-contractive map $\phi :X\rightarrow B(H)$. We say that an operator
state $\psi :X\rightarrow B(\widetilde{H})$ is a \emph{dilation }of $\phi $
if there exists a linear isometry $v:H\rightarrow \widetilde{H}$ such that $%
v^{\ast }\psi (x)v=\phi (x)$ for every $x\in X$. It follows from
Stinespring's dilation theorem \cite[Theorem II.6.9.7]%
{blackadar_operator_2006} that if $A$ is a C*-algebra, then an operator
state on $A$ admits a dilation which is a *-homomorphism. A dilation $\psi $
of an operator state $\phi :x\mapsto v^{\ast }\psi (x)v$ on $X$ is \emph{%
trivial }if $\psi (x)=v v^* \psi (x)v v^{\ast } +(1-v v^*)\psi (x)\left( 1- v v^{\ast
}\right) $. We say that $\phi $ is \emph{maximal }if it has no nontrivial
dilation. As in the case of operator systems, one can prove that an operator
state $\phi :X\rightarrow B(H)$ is maximal if and only if for any dilation $%
\psi $ of $\phi $ one has that $\left\Vert \psi (x)\xi \right\Vert
=\left\Vert \phi (x)\xi \right\Vert $ for every $x\in X$, and $\xi \in H$.

Suppose that $X$ is a selfadjoint subspace of a C*-algebra $A$ such that $A$
is generated as a C*-algebra by $X$. An operator state $\phi $ on $X$ has the%
\emph{\ unique extension property} if any completely positive contractive
map $\widetilde{\phi }:A\rightarrow B(H)$ whose restriction to $X$ coincides
with $\phi $ is automatically a *-homomorphism. The same argument as in the
operator systems setting shows that an operator state is maximal if and only
if it has the unique extension property; see \cite%
{arveson_noncommutative_2008}.

\begin{definition}
\label{Definition:bound-rep ordered}A \emph{boundary representation }for a
matrix-gauged space $X\subset B(H)$ is an operator state $\phi :X\rightarrow
B(H)$ with the property that any completely positive contractive map $\psi
:C^{\ast }(X)\rightarrow B(H)$ extending $X$ is an irreducible
representation of $C^{\ast }(X)$.
\end{definition}

In the following we will identify a boundary representation of $X$ with its
unique extension to an irreducible representation of $C^{\ast }(X)$. It
follows from the remarks above that the notion of boundary representation
does not depend on the concrete realization of $X$ as a selfadjoint space of
operators. In the following we will assume that $A$ is a C*-algebra, and $%
X\subset A\ $is a selfadjoint subspace that generates $A$ as a C*-algebra.
We regard $X$ as a matrix-gauged space endowed with the matrix-gauges
induced by $A$.

\begin{proposition}
\label{Proposition:unitize-boundary}Suppose that $\phi :X\rightarrow B(H)$
is an operator state of $X$, and $\phi ^{\dagger }:X^{\dagger }\rightarrow
B(H)$ is its canonical unital completely positive extension to the
unitization of $X$. If $\phi ^{\dagger }$ is a boundary representation for $%
X^\dagger$, then $\phi $ is a boundary representation for $X$.
\end{proposition}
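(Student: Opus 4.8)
The plan is to verify Definition \ref{Definition:bound-rep ordered} directly for $\phi$ by lifting an arbitrary completely positive contractive extension of $\phi$ to the unitization, invoking the hypothesis on $\phi^{\dagger }$ there, and then transporting the conclusion back down. First I would fix a concrete picture: realize $X$ as a selfadjoint generating subspace of $A=C^{\ast }(X)$ and realize $X^{\dagger }=X+\mathbb{C}1$ inside the unitization $A^{\dagger }$. Since $X$ generates $A$ and $1\in X^{\dagger }$, the subspace $X^{\dagger }$ generates $A^{\dagger }$, so that $C^{\ast }(X^{\dagger })=A^{\dagger }$ and the order unit $1$ of $X^{\dagger }$ is the identity of this $C^{\ast }$-algebra. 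This identification, together with the compatibility of the two unitizations, is the one point I expect to require care.

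Next I would take an arbitrary completely positive contractive map $\psi :A\rightarrow B(H)$ with $\psi |_{X}=\phi $, and show it is an irreducible representation of $A$. Regarding $A$ as a matrix-gauged space, $\psi $ is completely gauge-contractive, since a linear map between $C^{\ast }$-algebras is completely gauge-contractive precisely when it is completely positive and contractive. Hence, by the universal property of the unitization (equivalently, by Lemma \ref{Lemma:unitize-algebra}), $\psi $ extends uniquely to a unital completely positive map $\Psi :A^{\dagger }\rightarrow B(H)$ with $\Psi |_{A}=\psi $.

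The key observation is then that $\Psi $ extends $\phi ^{\dagger }$. Indeed, $\Psi |_{X^{\dagger }}$ is a unital completely positive map on $X^{\dagger }$ whose restriction to $X$ is $\psi |_{X}=\phi $; by the uniqueness clause in the universal property of the unitization of $X$, the only such map is $\phi ^{\dagger }$, so $\Psi |_{X^{\dagger }}=\phi ^{\dagger }$. Thus $\Psi $ is a unital completely positive extension of $\phi ^{\dagger }$ to $C^{\ast }(X^{\dagger })=A^{\dagger }$. Since $\phi ^{\dagger }$ is a boundary representation for $X^{\dagger }$, every such extension is an irreducible representation, so $\Psi $ is an irreducible $\ast $-representation of $A^{\dagger }$.

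Finally I would descend to $A$. Restricting a $\ast $-homomorphism to the $C^{\ast }$-subalgebra $A$ gives that $\psi =\Psi |_{A}$ is a $\ast $-homomorphism. For irreducibility, adjoining the identity changes nothing at the level of commutants: $\Psi (A^{\dagger })=\Psi (A)+\mathbb{C}1$, so $\psi (A)^{\prime }=\Psi (A)^{\prime }=\Psi (A^{\dagger })^{\prime }=\mathbb{C}1$, and $\psi $ is irreducible. As $\psi $ was an arbitrary completely positive contractive extension of $\phi $ to $C^{\ast }(X)$, it follows that $\phi $ is a boundary representation for $X$. The whole argument is bookkeeping with the universal property of the unitization; the only substantive steps are the identification $C^{\ast }(X^{\dagger })=A^{\dagger }$ and the compatibility $\Psi |_{X^{\dagger }}=\phi ^{\dagger }$, with the transfer of irreducibility being immediate.
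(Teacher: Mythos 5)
Your proof is correct and follows essentially the same route as the paper's: take an arbitrary completely positive contractive extension of $\phi$ to $C^{\ast}(X)$, lift it to a unital completely positive map on the unitization, invoke the boundary-representation hypothesis on $\phi^{\dagger}$, and restrict back down. The only difference is that you make explicit several steps the paper leaves implicit (the identification $C^{\ast}(X^{\dagger})=A^{\dagger}$, the verification that the lifted map restricts to $\phi^{\dagger}$ on $X^{\dagger}$, and the commutant argument showing irreducibility passes to the restriction), all of which are correct.
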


\begin{proof}
Let $\Phi :A\rightarrow B(H)$ be a completely positive contractive map
extending $\phi $. Extend $\Phi $ to a unital completely positive $\Phi
^{\dag }:A^{\dagger }\rightarrow B(H)$. Then since by assumption $\phi
^{\dagger }$ is a boundary representation, we conclude that $\Phi ^{\dag }$
is an irreducible representation for $A^{\dagger }$. Therefore $\Phi |_{A}$
is an irreducible representation of $A$. This concludes the proof.
\end{proof}

The following result is then a consequence of Proposition \ref%
{Proposition:unitize-boundary} and \cite[Theorem 3.1]{davidson_choquet_2013}.

\begin{theorem}
\label{Theorem:enough-boundary-ordered}Suppose that $X$ is a matrix-gauged
space. Then the matrix-gauges of $X$ are completely determined by the
boundary representations of $X$. Precisely, if $x\in M_{n}(X)$, then $\nu
_{n}(x)$ is the supremum of $\left\Vert \phi ^{(n)}(x)_{+}\right\Vert $
where $\phi $ ranges among all the boundary representations of $X$.
\end{theorem}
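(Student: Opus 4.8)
The plan is to reduce the statement about a general matrix-gauged space $X$ to the already-available result about operator systems, via the unitization functor $X \mapsto X^\dagger$ and Proposition \ref{Proposition:unitize-boundary}. The target result is \cite[Theorem 3.1]{davidson_choquet_2013}, which (in its operator-system form) asserts that an operator system is completely order-determined by its boundary representations; concretely, for an operator system $S$ and a selfadjoint $z \in M_n(S)$, the gauge $\inf\{t > 0 : z \le t\,1\}$ equals the supremum of $\|\omega^{(n)}(z)_+\|$ as $\omega$ ranges over the boundary representations of $S$. The main task is to transport this statement across the embedding $X \hookrightarrow X^\dagger$ and to match up the boundary representations on the two sides.

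First I would fix $n \in \mathbb{N}$ and a selfadjoint $x \in M_n(X)$. Recall from the discussion of the unitization that the canonical embedding $X \hookrightarrow X^\dagger$ is completely gauge-isometric, so $\nu_n(x)$ computed in $X$ agrees with the operator-system gauge $\nu_n^\dagger(x) = \inf\{t > 0 : x \le t\,1\}$ computed in $X^\dagger$. Applying \cite[Theorem 3.1]{davidson_choquet_2013} to the operator system $X^\dagger$, I would write
\begin{equation*}
\nu_n(x) = \nu_n^\dagger(x) = \sup \bigl\{ \|\omega^{(n)}(x)_+\| : \omega \text{ a boundary representation of } X^\dagger \bigr\}.
\end{equation*}
It remains to replace the supremum over boundary representations of $X^\dagger$ by a supremum over boundary representations of $X$.

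The crux is the correspondence between the two families of boundary representations. Proposition \ref{Proposition:unitize-boundary} gives one direction: if $\phi^\dagger$ is a boundary representation of $X^\dagger$, then its restriction $\phi = \phi^\dagger|_X$ is a boundary representation of $X$, and clearly $\phi^{(n)}(x) = (\phi^\dagger)^{(n)}(x)$ for $x \in M_n(X)$, so every term in the operator-system supremum is realized by a boundary representation of $X$. This already yields the inequality $\nu_n(x) \le \sup\{\|\phi^{(n)}(x)_+\| : \phi \text{ boundary rep.\ of } X\}$. For the reverse, I would argue that every boundary representation $\phi$ of $X$ arises this way: by the universal property of the unitization, $\phi$ extends uniquely to a unital completely positive $\phi^\dagger$ on $X^\dagger$, and since $\phi$ has the unique extension property with irreducible extension to $C^*(X)$, the map $\phi^\dagger$ must likewise have the unique extension property with irreducible extension to $C^*(X^\dagger) = C^*(X)^\dagger$ (or $C^*(X)$ itself, depending on whether $1 \in C^*(X)$), hence is a boundary representation of $X^\dagger$. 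Together the two directions show the suprema coincide, completing the proof.

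The step I expect to be the main obstacle is verifying cleanly that a boundary representation of $X$ lifts to a boundary representation of $X^\dagger$, i.e.\ the converse of Proposition \ref{Proposition:unitize-boundary}. One must check that irreducibility and the unique extension property are preserved under unitization of the generated C*-algebra, being careful about whether adjoining a unit to $C^*(X)$ changes the irreducible representations or introduces spurious completely positive extensions that fail to be multiplicative. The safest route is to appeal directly to the fact, recorded above, that an operator state is maximal if and only if it has the unique extension property, and that maximality is manifestly preserved by the canonical unital extension; this sidesteps the need to track irreducibility through the unitization by hand.
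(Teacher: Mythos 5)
Your first two paragraphs reproduce exactly the paper's argument: the paper proves this theorem by combining Proposition \ref{Proposition:unitize-boundary} with \cite[Theorem 3.1]{davidson_choquet_2013} applied to the unitization $X^{\dagger}$, just as you do --- the embedding $X \hookrightarrow X^{\dagger}$ is completely gauge-isometric, every boundary representation of $X^{\dagger}$ is of the form $\phi^{\dagger}$ by the universal property of the unitization, and Proposition \ref{Proposition:unitize-boundary} says its restriction $\phi$ to $X$ is a boundary representation of $X$ agreeing with it on $M_n(X)$. This correctly yields the nontrivial inequality $\nu_n(x) \le \sup\bigl\{\Vert \phi^{(n)}(x)_+\Vert : \phi \text{ a boundary representation of } X\bigr\}$.

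Where you go astray is the reverse inequality. You treat it as requiring the converse of Proposition \ref{Proposition:unitize-boundary} --- that every boundary representation of $X$ lifts to one of $X^{\dagger}$ --- and you flag this as the main obstacle. In fact the reverse inequality is immediate from the definitions and needs no correspondence at all: a boundary representation $\phi: X \to B(H)$ is by Definition \ref{Definition:bound-rep ordered} an operator state, i.e.\ a completely gauge-contractive map, and the canonical gauge on $B(H)$ sends a selfadjoint $y$ to $\Vert y_+\Vert$; hence $\Vert \phi^{(n)}(x)_+\Vert \le \nu_n(x)$ for every selfadjoint $x \in M_n(X)$, so the supremum is at most $\nu_n(x)$. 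Moreover, your proposed resolution of the (unnecessary) converse is itself incomplete: maximality of $\phi^{\dagger}$ only gives the unique extension property, whereas being a boundary representation of $X^{\dagger}$ also requires the unique u.c.p.\ extension to $C^{\ast}(X^{\dagger})$ to be an \emph{irreducible} representation --- this cannot be ``sidestepped.'' It is true, but it needs the additional observation that this extension restricts on $C^{\ast}(X)$ to a completely positive contractive extension of $\phi$, which is irreducible because $\phi$ is a boundary representation of $X$, and that a unital representation of $C^{\ast}(X) + \mathbb{C}1$ has the same commutant as its restriction to $C^{\ast}(X)$. With the reverse inequality obtained directly, none of this machinery is needed, and your proof reduces to the paper's.
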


Suppose that $X$ is a matrix-gauged space, and $\phi :X\rightarrow B(H)$ is
an operator state. An\emph{\ operator convex combination} is an expression $%
\phi =\alpha _{1}^{\ast }\phi _{1}\alpha _{1}+\cdots +\alpha _{n}^{\ast
}\phi _{n}\alpha _{n}$, where $\alpha _{i}:H\rightarrow H_{i}$ are linear
maps, and $\phi _{i}:X\rightarrow B(H_{i})$ are operator states for $%
i=1,2,\ldots ,\ell $. Such a rectangular convex combination is \emph{proper }%
if the $\alpha _{i}$'s are right invertible and $\alpha _{1}^{\ast }\alpha
_{1}+\cdots +\alpha _{n}^{\ast }\alpha _{n}=1$ and \emph{trivial }if $\alpha
_{i}^{\ast }\alpha _{i}=\lambda _{i}1$ and $\alpha _{i}^{\ast }\phi
_{i}\alpha _{i}=\lambda _{i}\phi $ for some $\lambda _{i}\in \left[ 0,1%
\right] $.

\begin{definition}
An operator state $\phi :X\rightarrow B(H)$ is an\emph{\ operator extreme
point} if for any proper operator convex combination $\phi =\alpha
_{1}^{\ast }\phi _{1}\alpha _{1}+\cdots +\alpha _{n}^{\ast }\phi _{n}\alpha
_{n}$ is trivial.
\end{definition}

Observe that the map $\phi \mapsto \phi ^{\dagger }$ establishes a 1:1
correspondence between operator states on $X$ and operator states on the
operator system $X^{\dagger }$. Furthermore this correspondence is \emph{%
operator affine }in the sense that it preserves operator convex
combinations. The following proposition is then an immediate consequence of
this observation and (the proof of) \cite[Theorem B]{farenick_extremal_2000}.

\begin{proposition}
\label{Proposition:characterize-extreme}Suppose that $\phi :X\rightarrow
B(H) $ is an operator state, and let $\phi ^{\dagger }:X^{\dagger
}\rightarrow B(H)$ be its unital extension to the unitization of $X$. The
following assertions are equivalent:

\begin{enumerate}
\item $\phi $ is a pure element in the cone of completely gauge-contractive
maps from $X$ to $B(H)$;

\item $\phi $ is an operator extreme point;

\item $\phi ^{\dagger }$ is an operator extreme point.
\end{enumerate}
\end{proposition}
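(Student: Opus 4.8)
The plan is to reduce everything to the operator system $X^\dagger$, where the corresponding statement is the already-quoted characterization of extreme points via purity (the proof of \cite[Theorem B]{farenick_extremal_2000}), and then transport it back along the unitization using the operator-affine bijection $\phi \mapsto \phi^\dagger$ recorded just before the proposition.

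First I would prove (2) $\Leftrightarrow$ (3). The assignment $\phi \mapsto \phi^\dagger$ is a bijection between operator states on $X$ and operator states on $X^\dagger$, and it preserves operator convex combinations: if $\phi = \sum_i \alpha_i^* \phi_i \alpha_i$ with $\sum_i \alpha_i^* \alpha_i = 1$, then $\sum_i \alpha_i^* \phi_i^\dagger \alpha_i$ is a unital completely positive map on $X^\dagger$ whose restriction to $X$ is $\phi$, hence it equals $\phi^\dagger$ by the uniqueness in the universal property of the unitization; conversely every operator convex combination of $\phi^\dagger$ restricts to one of $\phi$. Since the defining data $(\alpha_i)$ are literally the same on both sides, a combination is proper (resp.\ trivial) for $\phi$ exactly when the corresponding combination is proper (resp.\ trivial) for $\phi^\dagger$. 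In particular $\phi$ is an operator extreme point if and only if $\phi^\dagger$ is, which is (2) $\Leftrightarrow$ (3).

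Next I would establish (1) $\Leftrightarrow$ (3). Applying the Farenick characterization to the operator system $X^\dagger$ and the unital completely positive map $\phi^\dagger$, assertion (3) is equivalent to $\phi^\dagger$ being a pure element of the cone of completely positive maps $X^\dagger \to B(H)$, so it remains to identify this purity with (1), i.e.\ purity of $\phi$ in the cone of completely gauge-contractive maps $X \to B(H)$. For "$\phi$ pure $\Rightarrow \phi^\dagger$ pure", given $\phi^\dagger = \Psi_1 + \Psi_2$ with $\Psi_i$ completely positive I would restrict to $X$, obtaining a decomposition $\phi = \Psi_1|_X + \Psi_2|_X$ into gauge-positive maps; purity of $\phi$ gives $\Psi_i|_X = t_i\phi$ with $t_1+t_2 = 1$. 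Since $\Psi_i$ and $t_i\phi^\dagger$ are completely positive extensions of $t_i\phi$ differing only in their value at the unit, and $\Psi_1+\Psi_2 = \phi^\dagger = t_1\phi^\dagger + t_2\phi^\dagger$, the two unit-discrepancies sum to zero; showing they are individually positive then forces $\Psi_i = t_i\phi^\dagger$. For the converse I would lift a gauge-positive decomposition $\phi = \psi_1 + \psi_2$ to a completely positive decomposition of $\phi^\dagger$, apply purity of $\phi^\dagger$, and restrict.

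The main obstacle is exactly this transport of decompositions across the unitization, i.e.\ the bookkeeping of values at the unit. A completely positive map on $X^\dagger$ is not determined by its restriction to $X$, since it may be altered on the one extra dimension spanned by $1$, so one must control $\Psi_i(1)$. The clean device is a dominated-extension (Radon--Nikodym) statement: if $\psi \le \phi$ in the gauge-positive order on $X$, then $\psi$ extends to a completely positive $\Psi$ on $X^\dagger$ with $\Psi \le \phi^\dagger$; applying this to $\psi_1$ produces $\Psi_1$ and then $\Psi_2 := \phi^\dagger - \Psi_1$, yielding a lift with $\Psi_1(1)+\Psi_2(1) = 1$. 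The positivity of the unit-discrepancies needed in the previous paragraph I would verify by testing against vector states and using that a positive functional on an operator system attains its norm at the unit. Once these points are settled, purity transfers in both directions and the chain (1) $\Leftrightarrow$ (3) $\Leftrightarrow$ (2) is complete.
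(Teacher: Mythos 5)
Your proof of (2) $\Leftrightarrow$ (3) is correct, and it is precisely the paper's argument: the paper's entire proof consists of the observation that $\phi \mapsto \phi^{\dagger}$ is an operator-affine bijection between operator states of $X$ and of $X^{\dagger}$, combined with (the proof of) \cite[Theorem B]{farenick_extremal_2000} applied to the operator system $X^{\dagger}$, which identifies (3) with purity of $\phi^{\dagger}$ in the cone of completely positive maps on $X^{\dagger}$. The genuine gap is in your bridge from (1) to purity of $\phi^{\dagger}$, and it is not a repairable technicality: the step you yourself flag as the main obstacle --- that the unit discrepancies $\Psi_{i}(1)-t_{i}1$ are individually positive --- is false, and under the reading of (1) that your argument uses (decompositions $\phi=\psi_{1}+\psi_{2}$ with $\psi_{i}$ in the cone generated by the completely gauge-contractive maps, with no record of scalar weights) the implication (1) $\Rightarrow$ (3) is false outright, so no refinement of the vector-state argument can close the gap. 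Concretely, take $X=\{\mathrm{diag}(\lambda,0):\lambda\in\mathbb{C}\}\subset M_{2}(\mathbb{C})$ with the induced matrix gauges, so that by Lemma \ref{Lemma:unitize-algebra} one has $X^{\dagger}\cong\mathbb{C}^{2}$ with $X$ the first coordinate, and let $\phi:X\rightarrow\mathbb{C}$, $\phi(\lambda)=\lambda/2$. Every selfadjoint map $X\rightarrow\mathbb{C}$ in the cone in question has the form $\lambda\mapsto b\lambda$ with $b\geq0$; this cone is a single ray, so every decomposition of $\phi$ in it consists of nonnegative multiples of $\phi$, and $\phi$ is pure in your sense. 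Yet $\phi^{\dagger}(\lambda,\mu)=(\lambda+\mu)/2=\tfrac{1}{2}(\delta_{1}+\delta_{2})$ is visibly not pure on $\mathbb{C}^{2}$: the decomposition $\phi^{\dagger}=\tfrac{1}{2}\delta_{1}+\tfrac{1}{2}\delta_{2}$ restricts on $X$ to $\phi=1\cdot\phi+0\cdot\phi$ (so $t_{1}=1$, $t_{2}=0$), the unit discrepancies are $-\tfrac{1}{2}$ and $+\tfrac{1}{2}$ (summing to zero but not positive), and $\tfrac{1}{2}\delta_{1}\neq t_{1}\phi^{\dagger}$. Consistently, $\phi$ is not an operator extreme point: $\phi=(\tfrac{1}{\sqrt{2}})^{\ast}\,\mathrm{id}_{X}\,(\tfrac{1}{\sqrt{2}})+(\tfrac{1}{\sqrt{2}})^{\ast}\,0\,(\tfrac{1}{\sqrt{2}})$ is a proper, nontrivial operator convex combination (the zero map is an operator state here, as there is no unitality requirement). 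Your proposed justification of the positivity claim only yields, for $f=\langle\Psi_{i}(\cdot)\xi,\xi\rangle$, that $f(1)=\Vert f\Vert\geq\Vert f|_{X}\Vert$, and $\Vert f|_{X}\Vert$ can be strictly smaller than $t_{i}$ because a completely gauge-contractive map need not attain norm one anywhere --- exactly what the example exploits.

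The conceptual point is that a completely positive map on $X^{\dagger}$ carries strictly more information than its restriction to $X$, namely its value at $1$, so purity cannot be transported by looking at restrictions alone; condition (1) is only equivalent to (2) and (3) if the ``cone of completely gauge-contractive maps'' is understood so that this information is retained, for instance as the cone over the convex set of operator states, whose elements are pairs $(s,\rho)$ with $\rho$ equal to $s$ times an operator state and whose addition adds the weights. With that reading the lifting problem you wrestle with disappears: given $\phi=\rho_{1}+\rho_{2}$ with weights $s_{1}+s_{2}=1$ and $\rho_{i}/s_{i}$ gauge-contractive, the maps $\Psi_{i}:=s_{i}(\rho_{i}/s_{i})^{\dagger}$ are completely positive and $\Psi_{1}+\Psi_{2}$ is a unital completely positive extension of $\phi$, hence equals $\phi^{\dagger}$ by the uniqueness clause in the universal property of the unitization; no analysis of unit discrepancies is needed, and the dominated-extension (Radon--Nikodym) lemma that you invoke for the converse direction --- which you assert without proof and which is itself a nontrivial claim --- also becomes unnecessary. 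As written, however, your proof of the equivalence of (1) with the other two conditions does not go through.
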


The following corollary is an immediate consequence of Proposition \ref%
{Proposition:characterize-extreme},\ Proposition \ref%
{Proposition:unitize-boundary}, and \cite{davidson_choquet_2013}.

\begin{corollary}
Suppose that $X$ is a matrix-gauged space, and $\phi $ is an operator state
on $X$. If $\phi $ is operator extreme, then $\phi $ admits a dilation to a
boundary representation of $X$.
\end{corollary}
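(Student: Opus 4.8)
The plan is to reduce the statement to the already-known operator-system result by passing through the unitization $X^{\dagger}$ and then transferring the conclusion back to $X$. First I would invoke Proposition~\ref{Proposition:characterize-extreme}: since $\phi$ is operator extreme, its canonical unital extension $\phi^{\dagger}:X^{\dagger}\to B(H)$ is an operator extreme point of the operator system $X^{\dagger}$, equivalently a pure unital completely positive map.

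Next I would apply the operator-system dilation theorem of Davidson and Kennedy \cite{davidson_choquet_2013}: a pure (operator extreme) unital completely positive map on an operator system admits a dilation to a boundary representation. This produces a unital completely positive map $\Psi:X^{\dagger}\to B(\widetilde{H})$ that is a boundary representation of $X^{\dagger}$, together with an isometry $v:H\to\widetilde{H}$ such that $v^{*}\Psi(x)v=\phi^{\dagger}(x)$ for every $x\in X^{\dagger}$.

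I would then argue that $\Psi$ is itself the unitization of an operator state on $X$. Set $\rho:=\Psi|_{X}$. As the restriction of a unital completely positive map, $\rho$ is completely gauge-contractive, i.e. an operator state on $X$, and restricting the intertwining identity to $x\in X$ gives $v^{*}\rho(x)v=\phi^{\dagger}(x)=\phi(x)$, so that $\rho$ is a dilation of $\phi$. Moreover $\Psi$ is a unital completely positive extension of $\rho$ to $X^{\dagger}$, and by the universal property of the unitization such an extension is unique, whence $\Psi=\rho^{\dagger}$. Since $\rho^{\dagger}=\Psi$ is a boundary representation of $X^{\dagger}$, Proposition~\ref{Proposition:unitize-boundary} yields that $\rho$ is a boundary representation of $X$. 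Thus $\rho$ is the desired dilation of $\phi$ to a boundary representation.

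The only genuine point requiring care --- and hence the main obstacle --- is this middle step of recognising that the boundary representation of $X^{\dagger}$ supplied by \cite{davidson_choquet_2013} is exactly the unitization $\rho^{\dagger}$ of an operator state $\rho$ on $X$, so that Proposition~\ref{Proposition:unitize-boundary} becomes applicable. This rests on the uniqueness clause in the universal property of $X^{\dagger}$ together with the compatibility of the two notions of dilation (operator system versus matrix-gauged), both governed by the same intertwining isometry $v$. Everything else is a direct citation of the two preceding propositions, so the corollary is essentially immediate once this identification is made.
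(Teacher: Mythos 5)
Your proof is correct and takes essentially the same route as the paper, which states the corollary as an immediate consequence of Proposition~\ref{Proposition:characterize-extreme}, Proposition~\ref{Proposition:unitize-boundary}, and \cite{davidson_choquet_2013} --- precisely the three ingredients you combine. Your middle step, identifying the Davidson--Kennedy boundary representation $\Psi$ of $X^{\dagger}$ with the unitization $\rho^{\dagger}$ of its restriction $\rho=\Psi|_{X}$ via the uniqueness clause in the universal property of $X^{\dagger}$, is a correct and worthwhile filling-in of the detail the paper leaves implicit.
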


\subsection{The C*-envelope of a matrix-gauged space \label%
{Subsection:C*-envelope}}

Suppose that $X$ is a matrix-gauged space. A pair $\left( A,i\right) $ is a
C*-cover if $A$ is a C*-algebra and $i:X\rightarrow A$ is a completely
gauge-isometric map whose range generates $A$ as a C*-algebra.

\begin{definition}
A C*-\emph{envelope} $\left( C_{e}^{\ast }(X),i\right) $ of $X$ is a C*-cover%
\emph{\ }of $X$\emph{\ }if it has the following universal property: for any
C*-cover $\left( B,j\right) $ of $X$, there exists a *-homomorphism $\theta
:B\rightarrow C_{e}^{\ast }(X)$ such that $\theta \circ j=i$.
\end{definition}

It is clear that the C*-envelope of a matrix-gauged space, if it exists, it
is essentially unique. We will prove below that any matrix-gauged space has
a C*-envelope. The proof is essentially the same as the one for the
existence of the C*-envelope of an operator system.

Suppose that $X$ is an matrix-gauged space. Let $X\subset I(X)$ be the
injective envelope of $X$. By Proposition \ref{Proposition:CE}, $I(X)$ is a
unital C*-algebra. Let $A $ be the C*-subalgebra of $I(X)$ generated by $X$.
As in the proof of \cite[Theorem 15.16]{paulsen_completely_2002}, one sees
that $\left( i,A\right) $ where $i:X\rightarrow A$ is the inclusion map, is
the C*-envelope of $X$.

It follows from the construction that the C*-envelope (as defined above) of
an operator system regarded as matrix-gauged space with its unique
compatible matrix-gauges coincides with the usual notion of C*-envelope of
an operator system.

Alternatively, one can construct the C*-envelope of a matrix-gauged space
using boundary representations, as for the C*-envelope of an operator
system. Indeed let $X$ be a matrix-gauged space. Define $\iota
_{e}:X\rightarrow B(H)$ to be the direct sum of all the boundary
representations for $X$, and then let $A$ be the C*-subalgebra of $B(H)$
generated by the image of $\iota _{e}$. It follows from the unique extension
property of boundary representations that $\left( \iota _{e},A\right) $ is
indeed the C*-envelope of $X$. In particular, this construction shows that,
for any C*-algebra $B$, $C_{e}^{\ast }\left( B\right) =B$.

As in the case of operator systems and operator spaces, one can define a
maximal or universal C*-algebra that contains a given ordered operator space
as a generating subset. Explicitly, the \emph{maximal }C*-algebra $C_{\max
}^{\ast }(X)$ of an ordered operator space is a C*-cover $\left( i,A\right) $
of $X$ that has the following universal property: given any other completely
gauge-contractive map $f:X\rightarrow B$, where $B$ is a C*-algebra, there
exists a *-homomorphism $\theta :A\rightarrow B$ such that $\theta \circ i=f$%
. In order to see that such a maximal C*-algebra exists, one can consider
the collection $\mathcal{F}$ of all completely gauge-contractive maps from $%
X $ to $M_{n}\left( \mathbb{C}\right) $ for $n\in \mathbb{N}$. Then let $i$
be the direct sum of the elements $s:X\rightarrow M_{n_{s}}\left( \mathbb{C}%
\right) $ of $\mathcal{F}$, and then $A$ to be the C*-subalgebra of $%
\bigoplus\nolimits_{s\in \mathcal{F}}^{\infty }M_{n_{s}}\left( \mathbb{C}%
\right) $ generated by the image of $i$. The same proof as \cite[Proposition
8]{kirchberg_c*-algebras_1998} shows that such a C*-cover satisfies the
required universal property.

\subsection{Selfadjoint ordered operator spaces and compact matrix convex
sets}

We want to conclude by observing that selfadjoint operator spaces are in
canonical 1:1 correspondence with compact matrix convex sets with a
distinguished extreme point.

Suppose that $\boldsymbol{K}=\left( K_{n}\right) $ is a compact matrix
convex set, and $e\in K_{1}$ is a matrix extreme point. Define $A_{0}(%
\boldsymbol{K},e)$ to be the set of continuous matrix-affine functions from $%
\boldsymbol{K}$ to $\left( M_{n}(\mathbb{C})\right) _{n\in \mathbb{N}}$ that
vanish at $e$. Then $A_{0}(\boldsymbol{K},e)$ is a selfadjoint subspace of
codimension $1$ of the operator system $A(\boldsymbol{K})$.

Conversely suppose that $X\subset B(H)$ is a selfadjoint operator space.
Consider $X$ as a normal matrix-ordered and matrix-normed space with respect
to the induced matrix-cones and matrix-norms, and let $X^{\dag }$ be the
unitization of $X$. Let, for $n\in \mathbb{N}$, $K_{n}$ be the space of
completely positive completely contractive selfadjoint maps from $X$ to $%
M_{n}\left( \mathbb{C}\right) $, endowed with the topology of pointwise
convergence. Observe that $\boldsymbol{K}=\left( K_{n}\right) $ is a compact
matrix-convex set, and $K_{n}$ can be identified with the space of unital
completely positive maps from $X^{\dag }$ to $M_{n}\left( \mathbb{C}\right) $%
. Let $e\in K_{1}$ be the zero functional on $X$. We have a canonical unital
complete order isomorphism $X^{\dag }\cong A(\boldsymbol{K})$. Under this
isomorphism $X$ is mapped into $A_{0}(\boldsymbol{K},e)$. Since $X$ has
codimension $1$ in $X^{\dag }$, such an isomorphism in fact maps $X$ onto $%
A_{0}(\boldsymbol{K},e)$.

The above construction shows that one can identify the unitization of $A_{0}(%
\boldsymbol{K},e)$ with the operator system $A(\boldsymbol{K})$. Furthermore
it is easy to see that the correspondence $(K,e)\mapsto A_{0}(\boldsymbol{K}%
,e)$ is a contravariant equivalence of categories from the category of
compact matrix convex sets $\boldsymbol{K}$ with a distinguished matrix
extreme point $e\in K_{1}$, where morphisms are continuous matrix-affine
maps that preserve the distinguished point, to the category of selfadjoint
operator spaces and completely positive completely contractive selfadjoint
maps.

The commutative analog of the argument above establishes a correspondence
between compact convex sets with a distinguished extreme point and
selfadjoint operator spaces that can be represented inside an abelian
C*-algebra (selfadjoint function spaces).

\bibliographystyle{amsplain}
\bibliography{bibliography}

\end{document}